\newtheorem{theorem}{Theorem}[section]
\newtheorem{lemma}[theorem]{Lemma}
\newtheorem{prop}[theorem]{Proposition}
\theoremstyle{definition}
\newtheorem{definition}[theorem]{Definition}
\theoremstyle{remark}
\numberwithin{equation}{section}
\newcommand{\R}{{\mathbb R}}
\newcommand{\eps}{\varepsilon}
\renewcommand{\epsilon}{\varepsilon}
\newcommand{\Per}{{\rm {Per_\sigma}}}
\newcommand{\Om}{\Omega}
\newcommand{\p}{\partial}
\renewcommand{\le}{\leqslant}
\renewcommand{\leq}{\leqslant}
\renewcommand{\ge}{\geqslant}
\renewcommand{\geq}{\geqslant}
\def\Xint#1{\mathchoice
{\XXint\displaystyle\textstyle{#1}}%
{\XXint\textstyle\scriptstyle{#1}}%
{\XXint\scriptstyle\scriptscriptstyle{#1}}%
{\XXint\scriptscriptstyle\scriptscriptstyle{#1}}%
\!\int}
\def\XXint#1#2#3{{\setbox0=\hbox{$#1{#2#3}{\int}$ }
\vcenter{\hbox{$#2#3$ }}\kern-.6\wd0}}
\def\avg{\Xint{\mbox{{\bf ---}}}}
\begin{document}

\title[A fractional perimeter-Dirichlet integral functional]{Minimization of \\
a fractional perimeter-Dirichlet integral functional}

\author[L. Caffarelli]{Luis Caffarelli}
\address{University of Texas at Austin,
Department of Mathematics\\
1 University Station,
C1200 Austin, TX 78712-1082 (USA)}
\email{caffarel@math.utexas.edu}

\author[O. Savin]{Ovidiu Savin}
\address{Columbia University,
Mathematics Department\\
2990 Broadway,
New York, NY 10027 (USA)}
\email{savin@math.columbia.edu}

\author[E. Valdinoci]{Enrico Valdinoci}
\address{Weierstra{\ss} Institut f\"ur Angewandte Analysis und Stochastik\\
Mohrenstra{\ss}e 39, D-10117 Berlin (Germany)}
\address{Universit\`a di Milano, Dipartimento di Matematica\\
Via Cesare Saldini 50, I-20133 Milan (Italy)}
\email{enrico.valdinoci@unimi.it}

\begin{abstract}
We consider a minimization problem
that combines the Dirichlet energy with
the nonlocal perimeter of a level set, namely
$$ \int_\Om |\nabla u(x)|^2\,dx+\Per\Big( \{u > 0\},\Om \Big),$$
with $\sigma\in(0,1)$. We obtain regularity results for
the minimizers and for their free boundaries $\p \{ u>0\}$ using blow-up analysis. We will also give related results about
density estimates, monotonicity formulas, Euler-Lagrange equations
and extension problems.
\end{abstract}

\maketitle

\section{Introduction}

Let $\Om$ be a bounded domain~$\R^n$ and ~$\sigma\in(0,1)$ a fixed parameter. In this paper we discuss regularity properties for minimizers of the energy functional
\begin{equation}\label{E v}
J(u):=\int_\Om |\nabla u|^2 dx + \Per (E, \Om), \quad \quad \quad  \mbox{$E=\{u>0\}$ in $\Omega$.}
\end{equation}
where $\Per (E, \Om)$ represents the $\sigma$-fractional perimeter of the set $E$ in $\Omega$.

Here the set $E$ is fixed outside $\Om$ and coincides with $\{u>0\}$ in $\Om$, and we minimize $J$ among all functions~$u\in H^1(\Om)$ with prescribed boundary data i.e. $u=\varphi$ on~$\partial \Om$ for some fixed $\varphi \in H^1(\Om)$.

The fractional perimeter functional $\Per(E,\Om)$ was first introduced in~\cite{crs} and it represents the $\Om$-contribution in the double integral of the norm $\|\chi_E\|_{H^{\sigma/2}}$. Precisely, for any measurable set~$E\subseteq \R^n$
\begin{equation}\label{P}
\Per(E,\Om):=L(E\cap \Om,E^c)+L(E\setminus \Om, \Om\setminus E),
\end{equation}
where
$$ L(A,B):=\int_{A\times B}\frac{dx\,dy}{|x-y|^{n+\sigma}}.$$
It is known (see~\cite{cv, adm, maz, dpv}) that up to multiplicative constants ~$\Per(E,\R^n)$ converges to the classical perimeter functional as~$\sigma\to 1$
and it converges to $|E|$, the Lebesgue measure of $E$, as $\sigma \to 0$.
In this spirit, the functional in~\eqref{E v}
formally interpolates between the two-phase free boundary problem treated in \cite{ac} (where the term $\Per(E,\Omega)$ is replaced
by the classical perimeter of $E$ in $\Omega$)
and the Dirichlet-perimeter minimization functional treated in~\cite{acks}
(where $\Per(E,\Omega)$ is replaced by the Lebesgue measure of $E$ in $\Omega$).

In fact, all previous models correspond to particular cases of the general
nonlocal  phase transition setting as discussed in~\cite{chen} (see
in particular Section~3.5 there):
in our case, the square of the 
$H^{\sigma/2}$
norm of the function
${\,\rm sign\,} u$ is, in terms of~\cite{chen},
the double convolution of the
``phase field parameter'' $\phi$
with the corresponding fractional Laplacian
kernel.

The existence of minimizers follows easily by the direct method in the calculus
of variations, see Lemma \ref{l1} below. Our first regularity result deals with the H\"older regularity of solutions and density estimates for the free boundary $\p E$.

\begin{theorem}\label{H}
Let~$(u,E)$ be a minimizer of $J$ in~$B_1$ with $0 \in \p E$.
Then~$u$ is~$C^{\alpha}(B_1)$, with~$\alpha:=1-\frac\sigma2)$ and
\begin{equation}\label{3bis}
\|u\|_{C^\alpha(B_{r_0})}\le C.\end{equation}
Moreover for any $r \le r_0$
\begin{equation}\label{dense} \min\Big\{ |B_r\cap E|,\;
|B_r\cap E^c|\Big\} \ge c r^n.\end{equation}
The positive constants $C$, $c$ above depend only on $n$ and $\sigma$, and $r_0$ depends also on $\|u\|_{L^2(B_1)}$.
\end{theorem}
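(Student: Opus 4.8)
The plan rests on two soft facts together with a comparison/blow‑up analysis at free boundary points. The first is that $u$ is harmonic in each of the open sets $\{u>0\}$ and $\{u<0\}$: a perturbation compactly supported in one of them does not change $E=\{u>0\}$, hence does not change $\Per(E,\Om)$, so only the Dirichlet energy is felt. The second, and key, observation is that \emph{both $u^+$ and $u^-$ are subharmonic in $B_1$}. To see this for $u^+$, fix a ball $B\subset\subset B_1$, let $h$ be the harmonic function in $B$ with $h=u^+$ on $\p B$ (so $h\ge 0$, and $h>0$ in $B$ by the strong maximum principle unless $h\equiv0$, a case dispatched by adding a small $\eps\to0$), and test minimality with $\tilde u:=\min(u,h)$ in $B$, $\tilde u:=u$ outside. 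Since $h>0$ in $B$ one has $\{\tilde u>0\}=\{u>0\}$, so $\tilde E=E$ and $\Per$ is unchanged, while writing $w:=(u-h)^+\in H^1_0(B)$ and using that $h$ is harmonic gives $\int_B|\nabla\tilde u|^2=\int_B|\nabla u|^2-\int_B|\nabla w|^2$. Minimality forces $w\equiv0$, i.e. $u\le h$, i.e. $u^+\le h$, which is exactly the subharmonicity of $u^+$; the analogous test with $\max(u,-g)$, $g$ the harmonic extension of $u^-|_{\p B}$, handles $u^-$. Local boundedness of subsolutions then gives $\|u\|_{L^\infty(B_{1/2})}\le C\|u\|_{L^2(B_1)}=:\Lambda$, together with Caccioppoli $\int_{B_\rho}|\nabla u^{\pm}|^2\le C\rho^{-2}\int_{B_{2\rho}}(u^{\pm})^2$.

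The core of the argument is the scale‑invariant energy estimate
\[
\int_{B_r(x_0)}|\nabla u|^2+\Per\big(E,B_r(x_0)\big)\le C\,r^{\,n-\sigma}\qquad\text{for }x_0\in\p E,\ r\le r_0,
\]
whose exponent $n-\sigma$ is forced by the invariance of $J$ under $u\mapsto r^{-\alpha}u(r\,\cdot)$, $E\mapsto r^{-1}E$ (this is precisely where $\alpha=1-\sigma/2$ comes from). Taking $x_0=0$, I would compare $(u,E)$ with the ``emptied'' competitor: with $\eta$ a cutoff equal to $1$ on $B_{r/2}$ and vanishing outside $B_r$, put $\tilde u:=u-\eta u^+$ and $\tilde E:=E\setminus B_{r/2}$ (so $\{\tilde u>0\}=\tilde E$). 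A direct computation — the cross terms collapse because $\nabla u=\nabla u^+$ on $\{u^+>0\}$ — gives $\int_{B_r}(|\nabla\tilde u|^2-|\nabla u|^2)\le-\int_{B_{r/2}}|\nabla u^+|^2+Cr^{-2}\int_{B_{2r}}(u^+)^2$, and the exact identity
\[
\Per\big(E\setminus B_{r/2},\Om\big)-\Per(E,\Om)=2L\big(E\cap B_{r/2},\,E\cap B_{r/2}^{\,c}\big)-\Per\big(E\cap B_{r/2},\R^n\big)
\]
together with $L(E\cap B_{r/2},E\cap B_{r/2}^{\,c})\le L(B_{r/2},B_{r/2}^{\,c})=c_n(r/2)^{\,n-\sigma}$ bounds the perimeter change by $Cr^{\,n-\sigma}$ (indeed by $Cr^{\,n-\sigma}-\Per(E\cap B_{r/2},\R^n)$, a gain used below). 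Feeding this into $J(u)\le J(\tilde u)$ and combining with Caccioppoli for $u^-$ yields
\[
\int_{B_{r/2}}|\nabla u|^2+\Per(E,B_{r/2})\le C\,r^{\,n-\sigma}+C\,r^{-2}\!\!\int_{B_{2r}}\!u^2 .
\]
Here lies the main difficulty: a priori $r^{-2}\int_{B_{2r}}u^2=O(r^{\,n-2})$, which dominates $r^{\,n-\sigma}$ as $r\to0$; to close the estimate one needs the growth bound $\sup_{B_{2r}}|u|\le Cr^{\alpha}$ (equivalently $\int_{B_{2r}}u^2\lesssim r^{\,n+2-\sigma}$), which in turn requires the density estimates in order to run Poincar\'e against $\{u^+=0\}$ and $\{u^-=0\}$. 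I would therefore establish the energy bound, the growth bound $\sup_{B_r(x_0)}|u|\le Cr^{\alpha}$, and the two density estimates \emph{simultaneously}, by a dyadic induction on the scale started at the largest $r_0$ for which the crude bounds $\int_{B_{1/2}}|\nabla u|^2\le C\Lambda^2$ and $\Per(E,B_{1/2})\le C(1+\Lambda^2)$ (from Step 1 and one comparison) are $\le r_0^{\,n-\sigma}$ — this is exactly why $r_0$ depends on $\|u\|_{L^2(B_1)}$; were the induction to break down one would rescale $(u,E)$ at the worst scale and pass to a limit in which the nonlocal perimeter term becomes negligible, producing a nonzero harmonic limit forced to vanish at the origin at rate $\alpha$, a contradiction.

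For the density estimate $|B_r\cap E|\ge cr^n$, revisit the emptied competitor $E\setminus B_r(x_0)$: once the growth bound is in hand its Dirichlet cost is $O\big(r^{-2}\cdot r^{2\alpha}|E\cap B_{2r}|\big)=O\big(r^{-\sigma}|E\cap B_{2r}|\big)$, while by the perimeter identity above and the fractional isoperimetric inequality the perimeter gain is at least $\Per(E\cap B_r,\R^n)-Cr^{\,n-\sigma}\ge c\,|E\cap B_r|^{\frac{n-\sigma}{n}}-Cr^{\,n-\sigma}$. Writing $V(r):=|E\cap B_r|$, minimality then yields a differential inequality of De Giorgi type for $V$ whose only solutions compatible with $0\in\p E$ satisfy $V(r)\ge c\,r^n$; the estimate for $E^c$ follows from the complementation symmetry $\Per(E,\Om)=\Per(E^c,\Om)$ and the analogous ``filled'' competitor $E\cup B_r(x_0)$ (matching its boundary data on $\p B_r$ takes a little care but no new idea).

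Finally, the energy estimate of the second paragraph is a Morrey condition $\int_{B_r(x)}|\nabla u|^2\le Cr^{\,n-2+2\alpha}$ at \emph{every} point $x\in B_{r_0}$, not merely on $\p E$: writing $d:=\operatorname{dist}(x,\p E)$ and letting $x_0\in\p E$ be a nearest point, for $r\le d$ one combines the interior gradient estimate for the harmonic function $u$ on $B_d(x)$ with $\sup_{B_{2d}(x_0)}|u|\le C(2d)^{\alpha}$, while for $r>d$ one uses $B_r(x)\subset B_{2r}(x_0)$; Morrey's embedding then gives $u\in C^{\alpha}(B_{r_0})$ with $\alpha=1-\sigma/2$ and the bound~\eqref{3bis}. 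The genuinely hard step is the coupled energy/growth/density estimate of the second paragraph, where the Dirichlet transition cost ($O(r^{\,n-2})$ a priori) exceeds the perimeter scale $r^{\,n-\sigma}$ and can be tamed only by an induction already carrying the growth and density information; the rest is comparison bookkeeping and classical elliptic estimates.
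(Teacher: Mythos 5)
Your preliminary observations are sound: the subharmonicity of $u^{\pm}$ via the harmonic-replacement comparison, the ``emptied'' competitor and the identity $\Per(E\setminus A,\Om)-\Per(E,\Om)=2L(A,E\setminus A)-\Per(A,\R^n)$, the role of the fractional isoperimetric inequality, and the final Morrey argument are all correct and appear in some form in the paper's framework. The difficulty you isolate is also the right one: a priori the Dirichlet cost of the emptied competitor is $O(r^{n-2})$, dominating $r^{n-\sigma}$, so the growth bound $\sup_{B_r}|u|\le Cr^\alpha$ is needed to close the energy estimate, and that growth bound in turn seems to need the density estimates. Where your proposal genuinely breaks down is in the claim that this circle can be closed by a ``dyadic induction'' alone. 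In the paper, the density lemma (Lemma~\ref{d88d99.99}) yields, from $\lambda_r^+\le M$, the bound $\lambda_{r/2}^-\le K(M)$ and the $E$-density in $B_{r/2}$. It does \emph{not} give any improvement on $\lambda_{r/2}^+$: trivially $\lambda_{r/2}^+\le 2^{1-\alpha}\lambda_r^+$ and, since $\alpha<1$, this constant exceeds $1$, so iterating the lemma makes $M$ and hence $K(M)$ grow geometrically. Your induction hypothesis at scale $2r$ therefore degrades when you pass to scale $r$, and nothing you propose arrests that degradation. The paper supplies exactly this missing mechanism in Lemma~\ref{oa}: the Alt--Caffarelli--Friedman monotonicity formula, applied to the disjointly supported subharmonic functions $u^\pm$, gives the a~priori decay $\lambda_r^+\lambda_r^-\le Cr^\sigma\|u\|_{L^2(B_1)}^2$, which guarantees that for all $r\le r_0$ at least one of $\lambda_r^\pm$ is $\le 1$. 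Only with this scale-by-scale normalization does the density lemma produce uniform growth and density estimates.

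The fall-back you sketch -- rescaling at the ``worst'' scale and passing to a limit in which the perimeter becomes negligible -- does not give the asserted contradiction. If $\sup_{B_{r_k}}|u|\gg r_k^\alpha$, normalizing by the sup kills the perimeter term and the (subsequential) limit is merely a bounded function, harmonic on its positive and negative sets, vanishing at the origin; nothing at this stage forces it to vanish at rate $\alpha$, nor to be trivial, because you have not yet secured any density information to carry into the blow-up. In short, the proposal omits the one genuinely problem-specific tool -- the ACF monotonicity formula -- that controls the two-phase interaction between $u^+$ and $u^-$ and pins down the exponent $\alpha=1-\sigma/2$; without it, or a substitute of comparable strength, the coupled induction does not close.
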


We remark that the H\"older exponent obtained in Theorem~\ref{H}
is consistent with the natural scaling of the problem, namely
\begin{equation}\label{scaling}\begin{split}
&{\mbox{if $u$ is a minimizer and~$u_r(x):=r^{\frac\sigma2-1}u(rx)$,}}
\\&\quad {\mbox{
then~$u_r$ is also a minimizer.}}
\end{split}\end{equation}

A minimizer $u$ is harmonic in its positive and negative sets and formally, at points $x$ on the free boundary $\{u=0\}$ it satisfies
\begin{equation}\label{EL0}
\kappa_{\sigma}(x):=\int_{\R^n} \frac{\chi_{E^c} -\chi_E}{|x-y|^{n+\sigma}} \, \,  dy =|\nabla u^+(x)|^2-|\nabla u^-(x)|^2,
\end{equation}
where $\kappa_\sigma(x)$ represents the $\sigma$-fractional curvature of $\p E$ at $x$ (a precise statement will be given in Theorem \ref{flat}).

Generically, we expect that the minimizer $u$ is Lipschitz near the free boundary. 
Then the fractional curvature becomes the dominating term in the free boundary condition above and $\p E$ can be viewed as a perturbation of the $\sigma$-minimal surfaces which were treated in \cite{crs}. 
However, differently from the limiting cases $\sigma=0$ and $\sigma=1$, for $\sigma \in (0,1)$ it seems difficult to obtain the Lipschitz continuity of $u$ at all points (see the discussion at the end of Section 5). 
For the regularity of the free boundary we use instead a monotonicity formula and study homogenous global minimizers. 
Following the strategy in \cite{crs} we obtain an improvement of flatness theorem for the free boundary $\p E$. 
We also show in the spirit of ~\cite{CVcone, CVgen} that in dimension $n=2$ all global minimizers are trivial and by the standard dimension reduction argument we obtain the following result.

\begin{theorem}\label{FB reg}
Let~$(u,E)$ be a minimizer in~$B_1$.
Then $\p E$ is a $C^{1,\gamma}$-hypersurface and it satisfies the Euler-Lagrange equation \eqref{EL0} in the viscosity sense, outside a small singular set $\Sigma \subset \p E$ of Haussdorff $(n-3)$-dimension.
\end{theorem}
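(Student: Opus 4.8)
The plan is to run the blow-up scheme of \cite{crs}, keeping track of the fact that, by \eqref{scaling}, the Dirichlet term sits at exactly the same scale as the fractional perimeter and therefore survives in the blow-up.

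\textbf{Monotonicity formula and blow-ups.} First I would establish a Weiss-type monotonicity formula at a free boundary point: if $0\in\p E$, the scale invariant energy
\[
\Phi(r)\;:=\;r^{\sigma-n}\Big(\int_{B_r}|\nabla u|^2\,dx+\Per(E,B_r)\Big)\;-\;\Big(1-\tfrac\sigma2\Big)\,r^{\sigma-n-1}\int_{\p B_r}u^2\,d\mathcal H^{n-1}
\]
is nondecreasing in $r$, and is constant on an interval if and only if $u$ is homogeneous of degree $1-\tfrac\sigma2$ and $E$ is a cone there. The only delicate point is the nonlocal term, which one handles either through the Caffarelli--Silvestre type extension that localizes $\Per$, or by the direct computation of \cite{crs}. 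Combining this with the $C^\alpha$ bound and the density estimates of Theorem \ref{H}, together with a compactness and lower semicontinuity lemma for $J$ (so that minimality and energy convergence pass to the limit while the fixed datum outside $B_{1/r}$ escapes to infinity), the rescalings $u_r(x):=r^{\sigma/2-1}u(rx)$, $E_r:=r^{-1}E$, converge along a subsequence $r\to0$ to a global minimizer $(u_0,E_0)$ of $J$ in every ball; by the equality case of $\Phi$ the function $u_0$ is homogeneous of degree $1-\tfrac\sigma2$, so $E_0$ is a cone.

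\textbf{Classification of homogeneous global minimizers in $\R^2$.} Next I would show that in the plane every homogeneous global minimizer is trivial, meaning $u_0\equiv0$ and $E_0$ a half-plane. A nonzero function homogeneous of the non-integer degree $1-\tfrac\sigma2\in(\tfrac12,1)$ which is harmonic and of one sign in an open cone of $\R^2$ forces that cone to be a single sector of opening exactly $\pi/(1-\tfrac\sigma2)>\pi$; hence $E_0$ and $E_0^c$ cannot both carry such a profile, and a sliding/comparison argument in the spirit of \cite{CVcone,CVgen} excludes the remaining configurations, leaving only $u_0\equiv0$ with $E_0$ a minimizing $\sigma$-cone, which is flat in the plane by \cite{crs}.

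\textbf{Flatness improvement, dimension reduction, Euler--Lagrange equation.} At a free boundary point whose blow-up is trivial, $\p E$ is trapped between nearby parallel hyperplanes while $u$ is correspondingly small there, so the fractional curvature is the dominant term in \eqref{EL0} and, up to lower order, the graph function of $\p E$ solves a linear equation of fractional Laplacian type with $C^{1,\gamma}$ solutions; the improvement of flatness scheme of \cite{crs} then applies to the pair $(u,E)$ and gives that $\p E$ is $C^{1,\gamma}$ near such points, where \eqref{EL0} holds in the viscosity sense by Theorem \ref{flat}. Finally, the density $\Theta(x):=\lim_{r\to0^+}\Phi_x(r)$ (the monotonicity quantity centered at $x$) is upper semicontinuous along $\p E$ and attains its minimum exactly at the flat points, so the singular set $\Sigma$ is the set where $\Theta$ exceeds this minimum; the planar classification supplies the base case of a Federer dimension reduction, yielding $\dim_{\mathcal H}\Sigma\le n-3$.

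\textbf{Main obstacle.} The crux is precisely that $\int_{B_r}|\nabla u|^2$ and $\Per(E,B_r)$ scale identically, so the Dirichlet energy does not disappear in the blow-up and the $n=2$ classification must treat genuine two-phase homogeneous solutions rather than reducing to the pure $\sigma$-perimeter problem; this is where the rigidity of homogeneous harmonic functions of non-integer order, combined with the Caffarelli--Valdinoci type arguments, is essential. A related subtlety, already flagged before the statement, is that one cannot assume $u$ is Lipschitz at every free boundary point: the resolution is that flatness of the blow-up is exactly what provides the extra control needed to run the improvement of flatness for $(u,E)$.
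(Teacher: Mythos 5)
Your proposal follows essentially the same roadmap the paper uses: Weiss-type monotonicity at free boundary points, blow-up to minimizing homogeneous pairs, triviality of minimizing cones in $\R^2$ via a Caffarelli--Valdinoci type translation argument, improvement of flatness at points with trivial blow-up, and Federer dimension reduction for the singular set. So the outline is right, and I will only flag the two places where it is imprecise enough to matter.

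First, the Weiss quantity you wrote, with $\Per(E,B_r)$ appearing directly, is not the object for which the paper proves monotonicity, and the needed differential inequality is not apparent for it: $\Per(E,B_r)$ involves far-field interactions with $B_r^c$ that do not vary nicely under the inner-variation/radial-rescaling competitor. The paper (following \cite{crs}) replaces $\Per(E,B_r)$ by the localized extension energy $c_{n,\sigma}\int_{\mathcal B_r^+} z^{1-\sigma}|\nabla U|^2\,dX$ and proves monotonicity of
\[
\Phi_u(r)=r^{\sigma-n}\Big(\int_{B_r}|\nabla u|^2\,dx+c_{n,\sigma}\int_{\mathcal B_r^+} z^{1-\sigma}|\nabla U|^2\,dX\Big)-\Big(1-\tfrac\sigma2\Big)r^{\sigma-n-1}\int_{\p B_r}u^2\,d\mathcal H^{n-1}
\]
using the characterization of minimality via the extension (Proposition \ref{EXT:TH}). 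You do mention the extension as an option, but the direct $\Per$-based formula should not be presented as an alternative on equal footing: it is not known to be monotone.

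Second, and more importantly, the $n=2$ classification is the only genuinely nontrivial step and your sketch leaves a gap exactly there. The rigidity observation that a one-signed homogeneous harmonic profile of degree $1-\frac\sigma2\in(\tfrac12,1)$ forces $\bar E$ to be a sector of opening $\pi/(1-\frac\sigma2)>\pi$, so that at most one of $\bar u^\pm$ is nontrivial, is true but is already noted in the paper in all dimensions (one of $\bar u^\pm$ has homogeneity $\ge 1$, hence vanishes), and it does not by itself exclude the configuration $\bar u^-\equiv 0$, $\bar u^+\not\equiv 0$ on such a sector. That exclusion is the crux. The paper does not deduce it from harmonic rigidity at all; it runs the CVcone translation argument directly on the \emph{pair} $(u,U)$: one proves
\[
\mathcal E_R(u^+_R,U^+_R)+\mathcal E_R(u^-_R,U^-_R)-2\mathcal E_R(u,U)\le CR^{n-2-\sigma},
\]
uses minimality to bound $\mathcal E_R(u^+_R,U^+_R)\le \mathcal E_R(u,U)+CR^{n-2-\sigma}$, forms the min/max competitors $V_R=\min(U,U^+_R)$, $W_R=\max(U,U^+_R)$, and applies the strong maximum principle to $W_R$ to derive a fixed energy gap $\delta>0$ that contradicts the $O(R^{n-2-\sigma})$ bound when $n=2$. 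Your text says ``a sliding/comparison argument in the spirit of \cite{CVcone,CVgen} excludes the remaining configurations,'' which points in the right direction, but the actual content---that the perturbation must be carried out simultaneously in $u$ and in the extension $U$, with the energy gap produced at the level of $U$ and the maximum principle applied there---is where the work is, and it is not present in the proposal.

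Minor further point: the dimension reduction requires a cone-splitting lemma (the paper's Proposition \ref{dr}: $(u,E)$ minimizing in $\R^n$ iff $(u,E\times\R)$ minimizing in $\R^{n+1}$), and for the Dirichlet part this requires constructing an admissible interpolating competitor in the extra variable, which has a small extra wrinkle compared to the pure $\sigma$-perimeter case. Your outline omits this step, though it is the most routine of the three.
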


In particular in dimension $n=2$ the free boundary is always a $C^{1,\gamma}$ curve. 
We remark that by using the strategy in \cite{barrios} the $C^{1,\gamma}$ regularity of $\p E$ can be improved to $C^\infty$ regularity.

The proofs of Theorem~\ref{H} and~\ref{FB reg}
require some additional results, that will be presented
in the course of the paper, such as
a monotonicity formula, a precise formulation of the Euler-Lagrange equation
and an equivalent extension problem of local type.

The paper is organized as follows. In Section 2 we state various estimates for the change in the Dirichlet integral whenever we perturb the set $E$ by $E \cup A$. We use these estimates throughout the paper and their proofs are postponed in the last section of the paper. We prove Theorem \ref{H} in Section 3 and the improvement of flatness theorem in Section 4. The monotonicity formula and some of its consequences are presented in Section 5. Finally in Section 6 we prove Theorem \ref{FB reg} by showing the regularity of cones in dimension 2.

\section{Estimates for the harmonic replacement}

In order to rigorously deal with the minimization concept
of the functional in~\eqref{E v}, we introduce some notation.

Let $\varphi \in H^1(\Omega)$ and $E_0 \subset \Omega ^c$ be given. We want to minimize the energy
\begin{equation}\label{J}
J_\Om(u):=\int_\Om |\nabla u|^2 dx + \Per (E, \Om)
\end{equation}
among all {\it admissible pairs} $(u,E)$ that satisfy
$$u-\varphi \in H_0^1(\Omega), \quad \quad E \cap \Omega^c=E_0,$$
$$u \ge 0 \quad \mbox{a.e. in $E \cap \Om$}, \quad \quad u \le 0 \quad \mbox{a.e. in $E^c \cap \Om$.} $$
We assume that there is an admissible pair with finite energy, say for simplicity $J(\phi,E_0 \cup \{\phi \ge 0\}) < \infty.$ 
From the lower semicontinuity of $J$ we easily obtain the existence of minimizers.

\begin{lemma}\label{l1}
There exists a minimizing pair $(u,E)$.
\end{lemma}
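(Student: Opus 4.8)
The plan is to run the direct method of the calculus of variations. First I would let $(u_k, E_k)$ be a minimizing sequence of admissible pairs, so that $J_\Om(u_k) \to m := \inf J_\Om$, where $m < \infty$ by the standing assumption that some admissible pair has finite energy. Since $\int_\Om |\nabla u_k|^2 \le J_\Om(u_k)$ is bounded and $u_k - \varphi \in H^1_0(\Om)$, the Poincar\'e inequality gives a uniform bound on $\|u_k\|_{H^1(\Om)}$; hence, after passing to a subsequence, $u_k \rightharpoonup u$ weakly in $H^1(\Om)$, with $u - \varphi \in H^1_0(\Om)$ since $H^1_0(\Om)$ is weakly closed, and (by Rellich) $u_k \to u$ strongly in $L^2(\Om)$ and a.e.\ along a further subsequence. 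For the sets, I would note that $\Per(E_k,\Om)$ is bounded, and—recalling that $\Per(E,\Om)$ controls the $H^{\sigma/2}$-type seminorm of $\chi_{E_k}$ on $\Om$—the functions $\chi_{E_k}$, which agree with the fixed $\chi_{E_0}$ on $\Om^c$, are bounded in a fractional Sobolev space; a compact embedding then yields $\chi_{E_k} \to \chi_E$ in $L^1_{loc}$ and a.e., so the limit is again the indicator of a measurable set $E$ with $E \cap \Om^c = E_0$. Alternatively, and more elementarily, one can extract an a.e.\ convergent subsequence of $\chi_{E_k}$ directly using the uniform bound on the double integral defining $\Per$ together with a diagonal argument.

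Next I would check that the limiting pair $(u,E)$ is admissible. The constraint $u - \varphi \in H^1_0(\Om)$ and $E \cap \Om^c = E_0$ have already been verified. For the sign constraints $u \ge 0$ a.e.\ on $E \cap \Om$ and $u \le 0$ a.e.\ on $E^c \cap \Om$: since $u_k \to u$ a.e.\ and $\chi_{E_k} \to \chi_E$ a.e., one has $u_k \chi_{E_k} \to u \chi_E$ a.e.; from $u_k \ge 0$ a.e.\ on $E_k \cap \Om$ we get $u_k^- \chi_{E_k} = 0$ a.e., and passing to the a.e.\ limit gives $u^- \chi_E = 0$ a.e., i.e.\ $u \ge 0$ a.e.\ on $E \cap \Om$; the other sign condition is symmetric. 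Thus $(u,E)$ is an admissible pair.

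Finally I would pass to the limit in the energy using lower semicontinuity of each term. The Dirichlet term is weakly lower semicontinuous on $H^1$: $\int_\Om |\nabla u|^2 \le \liminf_k \int_\Om |\nabla u_k|^2$. For the nonlocal perimeter, writing $\Per(E,\Om)$ via the integrand $|\chi_E(x) - \chi_E(y)|^2 / |x-y|^{n+\sigma}$ over the relevant product sets (note $|\chi_E(x)-\chi_E(y)|^2 = |\chi_E(x)-\chi_E(y)|$), Fatou's lemma applied to the a.e.\ convergence $\chi_{E_k} \to \chi_E$ gives $\Per(E,\Om) \le \liminf_k \Per(E_k,\Om)$. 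Adding the two inequalities yields $J_\Om(u) \le \liminf_k J_\Om(u_k) = m$, and since $(u,E)$ is admissible we must have $J_\Om(u) = m$, so $(u,E)$ is a minimizing pair.

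I expect the only delicate point to be the compactness of the sets $E_k$: one must justify that the uniform bound on $\Per(E_k,\Om)$—which controls interaction energy only across $\p E_k$ relative to $\Om$, not a full $H^{\sigma/2}(\R^n)$ norm—is enough to extract an a.e.\ convergent subsequence with a measurable-set limit, and that the fixed exterior data $E_0$ together with finiteness of the reference energy prevents mass from escaping. This is handled by combining the interior fractional Sobolev compactness with the fixed exterior configuration; everything else (weak $H^1$ compactness, Rellich, weak lower semicontinuity, Fatou) is standard.
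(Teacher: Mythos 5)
Your proposal is correct and follows essentially the same route as the paper: extract a minimizing sequence, use weak $H^1$ compactness plus $L^1$ compactness of the indicator functions (from the bounded fractional seminorm), verify admissibility of the limit via a.e.\ convergence, and conclude by weak lower semicontinuity of the Dirichlet term together with Fatou's lemma for the nonlocal perimeter. The paper's proof is just a terser statement of the same argument.
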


\begin{proof}
Let $(u_k,E_k)$ be a sequence of pairs along which $J$ approaches its infimum. By compactness, after passing to a subsequence, we may assume that
$u_k \rightharpoonup u  $ in $H^1(\Om)$, $u_k \to u$ in $L^2(\Om)$ and $\chi_{E_k} \to \chi_E$ in $L^1(\Om)$. Then $(u,E)$ is admissible and by the lower semicontinuity of the fractional perimeter functional (i.e. Fatou's lemma) we obtain that $(u,E)$ is a minimizing pair.
\end{proof}

Notice that a minimizing pair in $\Om$ is also a minimizing pair in any subdomain of $\Om$. We assume throughout, after possibly modifying $E$ on a set of measure $0$, that the topological boundary of $E$ coincides with its essential boundary, that is
$$\p E=\left \{x\in \R^n {\mbox{ s.t. }} 0< |E \cap B_r(x)| < |B_r(x)|
\quad \mbox{for all $r>0$}  \right \}.$$

We recall the notion of {\it harmonic replacement} from \cite{acks}.

\begin{definition}\label{H.R} Let $\varphi \in H^1(\Om)$
and $K \subset \Om$ be a measurable set. Assume that the set
$$\mathcal D :=\{v {\mbox{ s.t. }} v-\varphi \in H_0^1(\Om) \quad \mbox{and $v=0$ a.e. in $K$} \} $$
  is not empty. Then we denote by $\varphi_K \in \mathcal D$ the unique minimizer of
 $$\min_{v \in \mathcal D} \int_{\Om}|\nabla v|^2,$$
 and say that $\varphi_K$ is the harmonic replacement of $\varphi$ that vanishes in $K$.
\end{definition}

{F}rom the definition it follows that
$$\int_{\Om} \nabla \varphi_K \cdot \nabla w =0, \quad \quad \quad \mbox{for all $w \in H_0^1(\Om)$ with $w=0$ a.e. in $K$.}  $$
Also, it is straightforward to check that if $\varphi \ge 0$ then $\varphi_K$ is subharmonic. In this case we think that $\varphi_K$ is defined pointwise as the limit of its solid averages.

Clearly if $(u,E)$ is a minimizing pair then we obtain
$$u^+=u^+_{E^c} \quad \mbox{and} \quad u^-=u^-_{E}.$$

Below we estimate the difference in the Dirichlet energies of the harmonic replacements in two different sets $E$ and $E \setminus A$, in terms of the measure of the set $A\subset B_{3/4}$. These estimates depend on the geometry of $E$ and $A$. We assume that $\varphi \in H^1(B_1) \cap L^\infty(B_1)$, $\varphi \ge 0$, and let
$$w:=\varphi_{E^c}, \quad \quad v:=\varphi_{E^c \cup A}.$$
The first lemma deals with the case when $A$ is interior to a ball.
\begin{lemma}\label{Dbe2.bis}
Assume $v$, $w$ are as above and $A:=B_\rho \cap E$ for some $\rho \in [\frac 14, \frac 34]$. Then
$$\int_{B_1} |\nabla v|^2-|\nabla w|^2 \, \, dx \le C |A| \, \|w\|^2_{L^\infty(B_1)},$$
for some constant $C$ depending only on $n$.
\end{lemma}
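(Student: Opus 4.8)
The plan is to reduce the inequality, via the usual orthogonality of harmonic replacements, to a weighted-capacity bound for $w$, and then to exploit that $w$ — being a harmonic replacement vanishing on $E^{c}$ — is small near $\partial B_{\rho}$ when $|A|$ is small.

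First I would record the identity $\int_{B_{1}}|\nabla v|^{2}-|\nabla w|^{2}\,dx=\int_{B_{1}}|\nabla(v-w)|^{2}\,dx$. Since $w=\varphi_{E^{c}}$, its Euler--Lagrange relation reads $\int_{B_{1}}\nabla w\cdot\nabla\zeta=0$ for every $\zeta\in H^{1}_{0}(B_{1})$ with $\zeta=0$ a.e.\ on $E^{c}$; both $v$ and $w$ vanish a.e.\ on $E^{c}$ and $v-w\in H^{1}_{0}(B_{1})$, so taking $\zeta=v-w$ gives $\int\nabla w\cdot\nabla(v-w)=0$, and the identity follows by expanding the square. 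Next, since $A=B_{\rho}\cap E$ we have $B_{\rho}\subseteq E^{c}\cup A$ up to a null set, so $v$, which vanishes a.e.\ on $E^{c}\cup A$, vanishes a.e.\ on $B_{\rho}$; thus $v-w=-w$ a.e.\ on $B_{\rho}$, and in fact $v=\varphi_{E^{c}\cup B_{\rho}}$.

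The second step is a competitor estimate. Writing $v=w+\gamma$, the minimality of $v$ among functions equal to $\varphi$ on $\partial B_{1}$ and vanishing on $E^{c}\cup B_{\rho}$ translates — the cross term dropping as above — into: $v-w$ minimizes $\int|\nabla\cdot|^{2}$ over $\{\gamma\in H^{1}_{0}(B_{1}):\gamma=0$ a.e.\ on $E^{c},\ \gamma=-w$ a.e.\ on $B_{\rho}\}$. For any Lipschitz $\eta$ with compact support in $B_{1}$ and $\eta\equiv 1$ on $B_{\rho}$, the function $-\eta w$ lies in this class (it is in $H^{1}_{0}(B_{1})$, it vanishes on $E^{c}$ because $w$ does, and it equals $-w$ on $B_{\rho}$), so $\int_{B_{1}}|\nabla(v-w)|^{2}\le\int_{B_{1}}|\nabla(\eta w)|^{2}$. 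Now testing the Euler--Lagrange relation of $w$ with $\zeta=\eta^{2}w$ — admissible, as it lies in $H^{1}_{0}(B_{1})$ and vanishes where $w$ does — yields $\int\eta^{2}|\nabla w|^{2}+2\int\eta w\,\nabla w\cdot\nabla\eta=0$; expanding $|\nabla(\eta w)|^{2}=\eta^{2}|\nabla w|^{2}+2\eta w\,\nabla w\cdot\nabla\eta+w^{2}|\nabla\eta|^{2}$ and integrating, the first two terms cancel, leaving the clean identity $\int_{B_{1}}|\nabla(\eta w)|^{2}=\int_{B_{1}}w^{2}|\nabla\eta|^{2}$. Altogether, $\int_{B_{1}}|\nabla v|^{2}-|\nabla w|^{2}\le\int_{B_{1}}w^{2}|\nabla\eta|^{2}$ for every such $\eta$.

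It remains to choose $\eta$ so that $\int_{B_{1}}w^{2}|\nabla\eta|^{2}\le C(n)\,|A|\,\|w\|_{L^{\infty}(B_{1})}^{2}$, i.e.\ to bound the $w^{2}$-weighted capacity of $B_{\rho}$ in $B_{1}$. Taking $\eta$ radial, $\eta(x)=\theta(|x|)$ with $\theta\equiv 1$ on $[0,\rho]$ and $\theta\equiv 0$ on $[(1+\rho)/2,1]$, one has $\int w^{2}|\nabla\eta|^{2}=\int_{\rho}^{(1+\rho)/2}\theta'(t)^{2}\psi(t)\,dt$ with $\psi(t)=\int_{\partial B_{t}}w^{2}\,d\sigma$, and the optimal profile leaves one to establish $\int_{\rho}^{(1+\rho)/2}\psi(t)^{-1}\,dt\ge c(n)\big(|A|\,\|w\|_{L^{\infty}}^{2}\big)^{-1}$. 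The crude bound $\psi(t)\le\|w\|_{L^{\infty}}^{2}\,|\partial B_{t}\cap E|$ only gives, after integration, $C\|w\|_{L^{\infty}}^{2}\,|E\cap(B_{1}\setminus B_{\rho})|$, which may be of order $\|w\|_{L^{\infty}}^{2}$ even though $|A|$ is arbitrarily small (take $E=B_{1}\setminus B_{\rho-\eps}$); so the real content is that $w$ must be \emph{small} on the spheres $\partial B_{t}$ for $t$ slightly above $\rho$, precisely because $w$ vanishes on $E^{c}\supseteq B_{\rho}\setminus A$. To see this I would use the sub-mean-value inequality $w(x)\le\|w\|_{L^{\infty}}\,|B_{r}(x)\cap E|/|B_{r}(x)|$ (valid for $B_{r}(x)\subset B_{1}$, since $w\ge 0$ is subharmonic and vanishes off $E$) together with $\int_{B_{\rho}}w^{2}\le\|w\|_{L^{\infty}}^{2}|A|$, selecting the radius $t$ by a dyadic/slicing argument; the hypothesis $\rho\le 3/4$ provides the room between $B_{\rho}$ and $\partial B_{1}$ needed for this. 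I expect this last step — upgrading the crude bound involving $|E\cap(B_{1}\setminus B_{\rho})|$ to the sharp one involving only $|A|$ — to be the main obstacle, the preceding steps being routine facts about harmonic replacements.
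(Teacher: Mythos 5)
Your reduction to the identity $\int_{B_1}|\nabla v|^2-|\nabla w|^2=\int_{B_1}|\nabla(v-w)|^2$, the observation that $v=\varphi_{E^c\cup B_\rho}$, the characterization of $v-w$ as a constrained Dirichlet minimizer, and the Caccioppoli identity $\int|\nabla(\eta w)|^2=\int w^2|\nabla\eta|^2$ are all correct (the paper records the first three steps as \eqref{eleq}--\eqref{dec}). After that, however, there is a genuine gap, and you flag it yourself: you never establish the weighted-capacity bound $\inf_\eta\int_{B_1}w^2|\nabla\eta|^2\le C|A|\,\|w\|_{L^\infty}^2$ over cutoffs $\eta\equiv 1$ on $B_\rho$ supported in $B_1$. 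This is not a routine loose end. With a \emph{radial} cutoff (your "optimal profile" computation with $\psi(t)=\int_{\partial B_t}w^2$), the bound generically fails: take $E$ so that $A=B_\rho\cap B_\eps(x_0)$ is a small cap near a point $x_0\in\partial B_\rho$ and $E\supset B_1\setminus B_\rho$. Then $|A|\sim\eps^n$, but since $w\ge v\gtrsim \|w\|_\infty(|x|-\rho)^+$ on all of $B_1\setminus B_\rho$ while $w\sim\|w\|_\infty\eps$ on the cap, one finds $\psi(\rho+\delta)\sim\|w\|_\infty^2(\delta^2+\eps^{n+1})$, hence $\big(\int_\rho^{(1+\rho)/2}\psi^{-1}\big)^{-1}\sim\|w\|_\infty^2\eps^{(n+1)/2}$, which is $\gg\|w\|_\infty^2\eps^n=|A|\,\|w\|_\infty^2$ for every $n\ge 2$. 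To recover $O(|A|)$ with your competitor $-\eta w$, the cutoff has to be non-radial and adapted to the geometry of $E$ (dropping over width $\sim\eps$ only near the cap and much faster elsewhere), which is essentially as delicate as the lemma itself; the sub-mean-value inequality you invoke controls $w$ pointwise but, as the cap example shows, does \emph{not} make $w$ small uniformly on spheres $\partial B_t$ with $t$ slightly above $\rho$.

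The paper avoids this altogether and goes a different route. It first proves a monotonicity statement (Lemma~\ref{comp}): if $\varphi_1\le\varphi_2$, $E_1\subset E_2$, $A_1\subset A_2$, then the energy increment is monotone. This is proved by a barrier/competitor argument using exactly the orthogonality relations you derived. With this in hand one may assume $\|w\|_{L^\infty}=1$, $\varphi\equiv 1$, and (by enlarging $E$ to $E\cup(B_1\setminus B_\rho)$, which does not change $A$) $E^c\subset B_\rho$. Then $v$ is the explicit radial equilibrium potential of $B_\rho$ in $B_1$, and by Schwarz symmetrization the Dirichlet energy of $w$ only decreases if $E^c$ is replaced by a centered ball of the same volume, so one is reduced to the purely radial configuration $E^c=B_r$, $A=B_\rho\setminus B_r$. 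There the increment is computed in closed form by integration by parts and is $\le Cr\le C|A|$. The comparison lemma plus symmetrization is what lets the paper sidestep the weighted-capacity estimate; without a surrogate for those two steps, the argument you outline is not yet a proof.
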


The next lemma gives the same bound in the case when $A$ is exterior to a ball under the additional hypothesis that $A$ satisfies a density property.

\begin{lemma}\label{Dbe2}
Let $v$, $w$ be as above and assume $E \cap B_{1/2}=\emptyset$. Let $A \subset B_{3/4} \setminus B_{1/2}$ be a closed set that satisfies the density property
\begin{equation*}
|A \cap B_r(x)| \ge \beta r^n \quad \mbox{for all $x \in \p A$ and $B_r(x) \cap B_{1/2}= \emptyset$},\end{equation*}
for some $\beta>0$. Then
$$\int_{B_1} |\nabla v|^2-|\nabla w|^2 \, \, dx\le C(\beta) |A| \, \|w\|^2_{L^\infty(B_1)},$$
for some constant $C(\beta)$ depending only on $n$ and $\beta$.
\end{lemma}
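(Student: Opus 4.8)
The plan is to reduce the exterior case to an interior-type estimate by building a competitor for $v$ out of $w$ together with a suitable cutoff that kills $w$ on a slightly enlarged set $\widetilde A\supset A$, and then to control the Dirichlet cost of this modification by $|A|$ times $\|w\|_{L^\infty}^2$, using the density hypothesis to guarantee that the enlargement $\widetilde A$ has measure comparable to $|A|$. Since $v$ is the minimizer of the Dirichlet integral among functions with boundary data $\varphi$ that vanish on $E^c\cup A$, and since $w=\varphi_{E^c}$ is harmonic in $B_{1/2}$ (because $E\cap B_{1/2}=\emptyset$ forces $w$ to be the harmonic extension there), it suffices to exhibit one admissible competitor $\bar v$ with $\int|\nabla\bar v|^2-|\nabla w|^2\le C(\beta)|A|\,\|w\|_{L^\infty}^2$; then $\int|\nabla v|^2\le\int|\nabla\bar v|^2$ gives the claim.

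The competitor I would take is $\bar v:=w(1-\eta)$ where $\eta$ is a Lipschitz cutoff, $0\le\eta\le1$, $\eta\equiv1$ on $A$, $\eta\equiv0$ outside a neighborhood $\widetilde A:=\{\operatorname{dist}(\cdot,A)<\delta\}$ of $A$, and $|\nabla\eta|\le C/\delta$. This $\bar v$ vanishes on $E^c$ (since $w$ does) and on $A$ (since $\eta=1$ there), and agrees with $\varphi$ on $\p B_1$ provided $\widetilde A$ stays away from $\p B_1$, which holds for $\delta$ small because $A\subset B_{3/4}$; hence $\bar v$ is admissible. Expanding,
\begin{equation*}
\int_{B_1}|\nabla\bar v|^2-|\nabla w|^2
= \int_{B_1}\big(\eta^2-2\eta\big)|\nabla w|^2
+ \int_{B_1} w^2|\nabla\eta|^2
- 2\int_{B_1} (1-\eta)\, w\,\nabla w\cdot\nabla\eta,
\end{equation*}
and using $\eta^2-2\eta\le 0$, $|1-\eta|\le1$, and Cauchy–Schwarz on the last term (absorbing $\tfrac12\int\eta|\nabla w|^2\le\tfrac12\int\mathbf 1_{\widetilde A}|\nabla w|^2$ into a term we can afford, or simply bounding crudely), everything is controlled by
\begin{equation*}
\int_{\widetilde A}|\nabla w|^2 + \frac{C}{\delta^2}\,\|w\|_{L^\infty(B_1)}^2\,|\widetilde A|.
\end{equation*}
The interior gradient estimate $|\nabla w|\le C\|w\|_{L^\infty}/\operatorname{dist}(\cdot,\{w=0\}\cup\p B_1)$ (valid where $w$ is harmonic, in particular on $B_{3/4}\setminus B_{1/2}\supset\widetilde A$ if $\delta$ is small, since $w$ is harmonic on $B_{1/2}$ and we may also use barriers near $\p B_1$) shows $|\nabla w|^2$ is bounded by $C\|w\|_{L^\infty}^2$ on a fixed neighborhood of $A$ bounded away from $\p B_1$; thus the first term is $\le C\|w\|_{L^\infty}^2|\widetilde A|$. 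Choosing $\delta\sim |A|^{1/n}$ (legitimate since $A$ is bounded and closed) and invoking the density property $|A\cap B_r(x)|\ge\beta r^n$ to deduce, via a Vitali covering of $\widetilde A$ by balls of radius $\sim\delta$ centered near $A$, that $|\widetilde A|\le C(\beta)|A|$, the whole expression is bounded by $C(\beta)\|w\|_{L^\infty}^2\cdot|A|^{1-2/n}\cdot|\widetilde A|\le C(\beta)|A|\,\|w\|_{L^\infty}^2$ once we also use $|\widetilde A|\le C(\beta)|A|\lesssim |A|^{1-2/n}\cdot|A|^{2/n}$, i.e. the $\delta^{-2}$ factor is exactly compensated by a factor $|A|^{2/n}$ gained from the density-controlled thinness of $\widetilde A$.

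The main obstacle is the estimate $|\widetilde A|\le C(\beta)|A|$, i.e. showing that the density property prevents $A$ from being so thin and spread out that its $\delta$-neighborhood has much larger measure than $A$ itself. The point is that the density hypothesis, applied at every boundary point $x\in\p A$ with $B_r(x)$ disjoint from $B_{1/2}$, forces $A$ to occupy a fixed fraction of each small ball it meets; a covering argument then gives that the $\delta$-neighborhood of $A$ is covered by boundedly many (in terms of $\beta$) copies of balls each of which is at least a $\beta$-fraction filled by $A$, whence $|\widetilde A|\le C(\beta)|A|$. Care is needed near $\p B_{1/2}$, where the density hypothesis is not assumed; but there $w$ and its gradient are controlled (indeed $w$ is harmonic across $B_{1/2}$, so $\nabla w$ is bounded by $C\|w\|_{L^\infty}$ on $B_{5/8}$, say), so the portion of $\widetilde A$ within, say, distance $\delta$ of $\p B_{1/2}$ contributes at most $C\|w\|_{L^\infty}^2\,|\widetilde A\cap\{\operatorname{dist}(\cdot,\p B_{1/2})<\delta\}|$ and this sliver is handled by the same covering, only now anchored at points of $A$ rather than $\p A$. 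Modulo this bookkeeping the argument is a routine competitor construction once the neighborhood-measure bound is in place.
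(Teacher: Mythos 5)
Your overall strategy --- normalize $\|w\|_{L^\infty}=1$, reduce to the model case where $w$ is Lipschitz, and then exhibit a competitor for $v$ --- matches the paper's approach, and the Vitali-type covering using the density hypothesis is also the right tool at the end. But the competitor you propose does not work, and I don't think any choice of the parameter $\delta$ can make it work.

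The problem is the $1/\delta^2$ factor. Your competitor $\bar v=w(1-\eta)$ with $|\nabla\eta|\le C/\delta$ produces a term of size $\frac{C}{\delta^2}\|w\|_\infty^2\,|\widetilde A|$. Since $\widetilde A\supset A$, you always have $|\widetilde A|\ge |A|$, so this term is $\ge \frac{C}{\delta^2}|A|\,\|w\|_\infty^2$. To get the desired bound $C(\beta)|A|\,\|w\|_\infty^2$ you would need $\delta\ge c>0$; but then $\widetilde A$ is a full $\delta$-neighborhood of $A$ and $|\widetilde A|\gtrsim \delta^n$ is bounded below independently of $|A|$, so the bound $|\widetilde A|\le C(\beta)|A|$ fails as $|A|\to 0$. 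With your choice $\delta\sim|A|^{1/n}$ the best you can hope for, even granting $|\widetilde A|\le C(\beta)|A|$, is $C(\beta)\|w\|_\infty^2|A|^{1-2/n}$, which for small $|A|$ is \emph{larger} than $C|A|\,\|w\|_\infty^2$ (the exponent $1-2/n$ is $<1$). The closing sentence of your proof, claiming the $\delta^{-2}$ factor is ``compensated'' by a factor $|A|^{2/n}$ from the covering, does not hold: the covering gives $|\widetilde A|\le C(\beta)|A|$, not $|\widetilde A|\le C(\beta)|A|^{1+2/n}$, and the latter is impossible anyway since $\widetilde A\supset A$. A second, subordinate issue is that your estimate $|\widetilde A|\le C(\beta)|A|$ for a \emph{uniform}-width neighborhood is also shaky near $\partial B_{1/2}$, where the density hypothesis only applies at scales smaller than $\mathrm{dist}(x,B_{1/2})$.

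The paper avoids both problems by choosing the competitor $\bar v:=\min\{w,\,C_0\,d_A\}$, where $d_A$ is the distance to $A$. This is a cutoff whose ``width'' automatically shrinks like $w(x)/C_0\sim\mathrm{dist}(x,\partial B_{1/2})$ as $x$ approaches $\partial B_{1/2}$; in particular $\bar v$ is \emph{uniformly} Lipschitz (with constant $\max\{C_0,\mathrm{Lip}(w)\}$, bounded since $w$ is Lipschitz in the reduced case), so
\[
\int_{B_1}|\nabla\bar v|^2-|\nabla w|^2\,dx\le C\,|S|,\qquad S:=\{\bar v<w\},
\]
with no $1/\delta^2$ loss. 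The set $S$ is contained in $\{C_0\,d_A<w\}\subset\{6\,d_A< d_{\partial B_{1/2}}\}$ for $C_0$ large, which localizes $S$ to balls $B_{d_y/5}(y)$ around $y\in\partial A$ with $d_y=d_{\partial B_{1/2}}(y)$; these balls stay away from $B_{1/2}$ by construction, so the density hypothesis applies at exactly the right scale, and a Vitali covering gives $|S|\le C(\beta)|A|$. If you want to repair your argument, the fix is precisely to replace the uniform-width cutoff by this distance-function competitor; otherwise there is no way to absorb the $1/\delta^2$.
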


Finally we provide a more precise estimate in the case when $\p E$ is more regular.

Let $u\in H^1(B_1) \cap C(\overline \Om) $ be harmonic in the sets $E=\{u>0\}$ and $\{u<0\}$. Assume $$0 \in \p E \quad \mbox{and} \quad  E=\{x_n > g(x')\}$$ is given by the subgraph in the $e_n$ direction of a $C^{1,\gamma}$ function. For a sequence of $\eps_k \to 0$ we consider sets $$A_k:= \{g(x') < x_n < f_k(x')\} \subset B_{\eps_k},$$
for a sequence of functions $f_k$ with bounded $C^{1,\gamma}$ norm. For each $k$ we define $\bar u_k$ the perturbation of $u$ for which the positive set is given by $E\cup A_k$, i.e.
$$\bar u^+_k= u^+_{E^c \setminus A_\eps} \quad \bar u^-_k=u^-_{E\cup A}.$$

\begin{lemma}\label{c1g} Then
$$\lim_{k \to \infty} \frac{1}{|A_k|} \int_{B_1} |\nabla \bar u_k|^2 -|\nabla u|^2 \, \, dx= |\nabla u^-(0)|^2-|\nabla u^+(0)|^2.$$
\end{lemma}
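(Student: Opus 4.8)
The plan is to turn the statement into an exact identity for the energy difference plus two boundary integrals of Hadamard type, and then to evaluate those integrals using the $C^{1,\gamma}$ hypotheses together with a blow--up at scale $\eps_k$. \emph{Step 1 (reduction to two energy drops).} Since $\nabla\bar u^+_k$ and $\nabla\bar u^-_k$ vanish a.e.\ off $E\cup A_k$ and off $(E\cup A_k)^c$ respectively, their supports are a.e.\ disjoint, so $\int_{B_1}|\nabla\bar u_k|^2=\int_{B_1}|\nabla\bar u^+_k|^2+\int_{B_1}|\nabla\bar u^-_k|^2$, and likewise $\int_{B_1}|\nabla u|^2=\int_{B_1}|\nabla u^+|^2+\int_{B_1}|\nabla u^-|^2$. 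Put $\phi_k:=\bar u^+_k-u^+$ and $\psi_k:=\bar u^-_k-u^-$, both in $H^1_0(B_1)$. As $\phi_k=0$ a.e.\ on $(E\cup A_k)^c$, it is an admissible variation in the problem defining $\bar u^+_k=u^+_{E^c\setminus A_k}$, hence $\int\nabla\bar u^+_k\cdot\nabla\phi_k=0$, and expanding $\int|\nabla u^+|^2=\int|\nabla(\bar u^+_k-\phi_k)|^2$ gives
$$\int_{B_1}|\nabla u^+|^2-\int_{B_1}|\nabla\bar u^+_k|^2=\int_{B_1}|\nabla\phi_k|^2 .$$
Since $\psi_k=0$ a.e.\ on $E$, it is an admissible variation for $u^-=u^-_E$, and in the same way $\int_{B_1}|\nabla\bar u^-_k|^2-\int_{B_1}|\nabla u^-|^2=\int_{B_1}|\nabla\psi_k|^2$. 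Thus it suffices to prove $|A_k|^{-1}\int|\nabla\phi_k|^2\to|\nabla u^+(0)|^2$ and $|A_k|^{-1}\int|\nabla\psi_k|^2\to|\nabla u^-(0)|^2$; subtracting then yields the asserted limit $|\nabla u^-(0)|^2-|\nabla u^+(0)|^2$.

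\emph{Step 2 (boundary representation).} Let $S_k:=\p E\cap\overline{A_k}$ (the covered part of the old free boundary) and $\Gamma_k:=\p(E\cup A_k)\cap\overline{A_k}$ (the new free boundary). Then $\phi_k$ is harmonic in $E$ and in $A_k$, vanishes on $\p B_1$, on $(E\cup A_k)^c$ (which contains $\Gamma_k$), and on $\p E\setminus\overline{A_k}$; its distributional Laplacian is carried by $S_k\cup\Gamma_k\cup(\p E\setminus\overline{A_k})$, but $\phi_k\equiv0$ on the last two, and across $S_k$ the only conormal jump is the one of $\nabla u^+$, of magnitude $|\nabla u^+|$, since $\bar u^+_k$ extends harmonically across $S_k$ while $u^+\equiv0$ on the $E^c$--side of $\p E$. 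Integrating by parts, and using $u^+\equiv0$ on $S_k\subset\p E$,
$$\int_{B_1}|\nabla\phi_k|^2=\int_{S_k}\bar u^+_k\,|\nabla u^+|\,d\mathcal{H}^{n-1}.$$
The symmetric computation for $\psi_k$ (which vanishes on $S_k$, the live jump now being that of $\nabla\bar u^-_k$ across its own free boundary $\Gamma_k$, while $u^-\equiv0$ on $\p E$) gives
$$\int_{B_1}|\nabla\psi_k|^2=\int_{\Gamma_k}u^-\,|\nabla\bar u^-_k|\,d\mathcal{H}^{n-1}.$$

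\emph{Step 3 (evaluating the surface integrals, modulo a blow--up).} All of $A_k,S_k,\Gamma_k$ concentrate at $0\in\p E$; after a rotation we may assume $\nabla g(0)=0$, so $\p E$ and $\Gamma_k$ are $C^{1,\gamma}$ graphs with bounded $C^{1,\gamma}$ norms that are $o(1)$--flat in $B_{\eps_k}$. Hence $d\mathcal{H}^{n-1}=(1+o(1))\,dx'$ on $S_k$ and $\int_{S_k}\mathrm{dist}(\,\cdot\,,\Gamma_k)\,d\mathcal{H}^{n-1}=(1+o(1))|A_k|$. By boundary Schauder estimates $u^+$ is $C^{1,\gamma}$ up to $\p E$ and $u^-$ is $C^{1,\gamma}$ up to $\p E$ from $E^c$, so on $S_k$ one has $|\nabla u^+|=|\nabla u^+(0)|+o(1)$, and on $\Gamma_k$, since $u^-=0$ on $\p E$, $u^-(x)=(|\nabla u^-(0)|+o(1))\,\mathrm{dist}(x,\Gamma_k)$. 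Applying the same estimate to the harmonic, uniformly bounded $\bar u^\pm_k$, which vanish on the uniformly--$C^{1,\gamma}$ graph $\Gamma_k$, gives $\bar u^+_k(x)=(|\nabla\bar u^+_k(x^*)|+o(1))\,\mathrm{dist}(x,\Gamma_k)$ for $x$ near $\Gamma_k$, with $x^*\in\Gamma_k$ the nearest point. Granting the claim of Step~4, namely $|\nabla\bar u^+_k|\to|\nabla u^+(0)|$ and $|\nabla\bar u^-_k|\to|\nabla u^-(0)|$ uniformly on $\Gamma_k$, we obtain $\int|\nabla\phi_k|^2=(|\nabla u^+(0)|+o(1))^2(1+o(1))|A_k|$ and $\int|\nabla\psi_k|^2=(|\nabla u^-(0)|+o(1))^2(1+o(1))|A_k|$, which with Step~1 completes the proof.

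\emph{Step 4 (the main obstacle: the gradient limit on $\Gamma_k$).} This is where the $C^{1,\gamma}$ control is essential. The uniform interior--boundary gradient bound for $\bar u^+_k$ (harmonic, uniformly bounded, zero on the uniformly--$C^{1,\gamma}$ graph $\Gamma_k$) together with the maximum principle for the harmonic $\phi_k$ on $E$ (whose trace is supported on $S_k\subset B_{\eps_k}$ and bounded by $\|\nabla\bar u^+_k\|_\infty\,\mathrm{diam}(A_k)$) gives $\|\phi_k\|_{L^\infty}=O(\eps_k)$. Hence $y\mapsto\eps_k^{-1}\phi_k(\eps_k y)$ is uniformly Lipschitz, harmonic on $\eps_k^{-1}(E\cup A_k)=\{y_n>\eps_k^{-1}f_k(\eps_k y')\}$, and zero on that boundary; since $A_k\subset B_{\eps_k}$ forces $f_k=g$ outside $B'_{\eps_k}$, the bounded $C^{1,\gamma}$ norm of $f_k$ and $\nabla g(0)=0$ yield $\eps_k^{-1}f_k(\eps_k\,\cdot\,)\to0$ in $C^{1,\alpha}_{loc}$, so a subsequential limit is a bounded harmonic function on $\{y_n>0\}$ vanishing on $\{y_n=0\}$, hence $\equiv0$ by Liouville (odd reflection). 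Combined with $\eps_k^{-1}u^+(\eps_k\,\cdot\,)\to|\nabla u^+(0)|\,x_n^+$, this gives $\eps_k^{-1}\bar u^+_k(\eps_k\,\cdot\,)\to|\nabla u^+(0)|\,x_n^+$, with $C^1$--convergence up to $\{y_n=0\}$ by uniform boundary Schauder estimates; since the gradient is scale invariant, unscaling gives $|\nabla\bar u^+_k|\to|\nabla u^+(0)|$ uniformly on $\Gamma_k$, and the same argument handles $\bar u^-_k$. I expect this blow--up / boundary--regularity step to be the technical heart; the remaining steps are just the rigorous Hadamard first--variation formula for the two harmonic replacements, with the $C^{1,\gamma}$ hypotheses ensuring that all error terms are $o(|A_k|)$.
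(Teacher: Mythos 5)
Your Step 1 (the exact Hadamard identities for the two harmonic replacements) and Step 2 (integration by parts to the surface integrals over $S_k$ and $\Gamma_k$) are correct and match the skeleton of the paper's proof, which likewise reduces to the first--variation identity \eqref{dif} and then evaluates a surface integral. Where you diverge is in how the surface integrals are evaluated: you propose a blow--up at scale $\eps_k$ plus Liouville, while the paper uses pointwise barriers built from Lemma \ref{c1ga} on one side and the envelope Lemma \ref{env} on the other.

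There is a genuine gap in Steps 3--4. You assume $\eps_k^{-1}(E\cup A_k)=\{y_n>\eps_k^{-1}f_k(\eps_k y')\}$ and then invoke uniform boundary Schauder estimates for $\bar u^+_k$ to get $|\nabla\bar u^+_k|\to|\nabla u^+(0)|$ uniformly on $\Gamma_k$. But $\p(E\cup A_k)$ is the graph of $\min(f_k,g)$, not of $f_k$: since $f_k$ is only assumed $C^{1,\gamma}$ and it meets $g$ along $\p\{f_k<g\}$, the domain $E\cup A_k$ has Lipschitz corners there. On $\Gamma_k$ near those corners the boundary Schauder constants are not uniform, so the claimed uniform $C^1$ convergence does not follow. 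Moreover this is not a removable nuisance: for the one--sided bound where the barrier must be harmonic in a \emph{superset} of $E\cup A_k$ with $C^{1,\gamma}$ boundary (the upper bound for $\int_{S_k}\bar u^+_k\,|\nabla u^+|$, equivalently the lower bound for $\int_{\Gamma_k}u^-\,|\nabla\bar u^-_k|$), you need a smooth graph lying below $\min(f_k,g)$ that still leaves a region of measure $(1-o(1))|A_k|$; that is precisely what the envelope Lemma \ref{env} produces, by showing the contribution of the corner set is $o(|A_k|)$. Your blow--up at scale $\eps_k$ cannot see this, because the vertical thickness $g-f_k$ (and hence the corner geometry) lives at the finer scale $\eps_k^{1+\gamma}$: after rescaling by $\eps_k$ both $h$ and the corner set collapse to the hyperplane, and the quantities you are comparing both vanish in the limit, so no information about their ratio survives. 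The paper's Lemma \ref{env} and the pointwise tangent--paraboloid barriers are exactly the mechanism that replaces your ``uniform Schauder on $\Gamma_k$'' step; without it (or a substitute of comparable strength) the argument does not close.
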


The proofs of Lemmas \ref{Dbe2.bis}-\ref{c1g} will be completed in the last section.

\section{Proof of Theorem~\ref{H}}\label{78:Y}

In this section we obtain the H\"older continuity of minimizers and uniform density estimates for their free boundary.
We adapt to our goals the strategy of \cite{acks}, and we simplify some steps using Lemma ~\ref{Dbe2.bis}. We start with a density estimate.

\begin{lemma}\label{d88d99.99}
Let~$(u,E)$ be a minimizer in~$B_1$ and assume $$0\in\partial E \quad \mbox{and} \quad \|u^+\|_{L^\infty(B_1)} \le M,$$ for some constant $M$. Then
$$ |E \cap B_{1/2}| \ge \delta, \quad \quad \|u^-\|_{L^\infty(B_{1/2})} \le K, $$
for some positive constant $\delta$, $K$ depending on $n$, $\sigma$ and $M$.
\end{lemma}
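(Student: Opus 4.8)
The plan is to prove the two conclusions of the lemma by testing minimality against two suitable competitors, in each case absorbing the change of the Dirichlet energy via Lemma~\ref{Dbe2.bis}.

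\emph{Measure density.} Put $a(\rho):=|E\cap B_\rho|$. For $\rho\le 3/4$ I would compare $(u,E)$ with $(\bar u,\bar E)$, where $\bar E:=E\setminus B_\rho$ and $\bar u:=u^+_{E^c\cup(E\cap B_\rho)}-u^-$: this pair is admissible, coincides with $u$ near $\partial B_1$, and its two phases have disjoint supports, so only the positive phase contributes to the change of Dirichlet energy. Minimality and Lemma~\ref{Dbe2.bis} (applied directly when $\rho\in[1/4,3/4]$, and to the rescaled minimizer $u_{4\rho}$ of~\eqref{scaling} when $\rho<1/4$, which introduces a weight $\rho^{-2}$) then give $\Per(E,B_1)-\Per(E\setminus B_\rho,B_1)\le C\rho^{-2}a(\rho)M^2$. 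On the other hand $\Per(E,B_1)-\Per(E\setminus B_\rho,B_1)=\Per(E\cap B_\rho,\R^n)-2L(E\cap B_\rho,E\setminus B_\rho)\ge \Per(E\cap B_\rho,\R^n)-2L(E\cap B_\rho,\R^n\setminus B_\rho)$, and the fractional isoperimetric inequality bounds $\Per(E\cap B_\rho,\R^n)$ below by $c\,a(\rho)^{(n-\sigma)/n}$, so that
$$c\,a(\rho)^{\frac{n-\sigma}{n}}\ \le\ 2L(E\cap B_\rho,\R^n\setminus B_\rho)+C\rho^{-2}a(\rho)M^2,\qquad \rho\le 3/4 .$$
I would finish by a dichotomy. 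If $a(\rho_*)>(c'\rho_*^{2}M^{-2})^{n/\sigma}$ for some $\rho_*\in[1/4,1/2]$, then $|E\cap B_{1/2}|\ge a(\rho_*)\ge\delta(n,\sigma,M)>0$ by monotonicity of $a$. Otherwise the term $C\rho^{-2}a(\rho)M^2$ is $\le\tfrac c2\,a(\rho)^{(n-\sigma)/n}$ on an interval $(\rho_1,1/2]$ with $\rho_1<1/4$, hence $c\,a(\rho)^{(n-\sigma)/n}\le C\int_0^\rho(\rho-t)^{-\sigma}a'(t)\,dt$ there — exactly the differential inequality behind the density estimate for $\sigma$-minimal sets in~\cite{crs} — and since $0\in\partial E$ forces $a(\rho)>0$, the same iteration gives $|E\cap B_{1/2}|\ge c(n,\sigma)(1/2-\rho_1)^n\ge\delta(n,\sigma)>0$. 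Either way $|E\cap B_{1/2}|\ge\delta$.

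\emph{Bound on $u^-$.} Here I would use $(\bar u,\bar E)$ with $\bar E:=E\setminus B_{3/4}$ and $\bar u:=u^+_{E^c\cup(E\cap B_{3/4})}-u^-_{E\setminus B_{3/4}}$, again admissible, equal to $u$ near $\partial B_1$, and with phases of disjoint supports. Lemma~\ref{Dbe2.bis} bounds the change of the positive-phase energy by $C|E\cap B_{3/4}|M^2\le C(n)M^2$, while $\Per(E\setminus B_{3/4},B_1)-\Per(E,B_1)\le L(E\cap B_{3/4},\R^n\setminus B_{3/4})\le C(n,\sigma)$ because $\sigma<1$; so minimality yields $\int_{B_1}|\nabla u^-|^2-\int_{B_1}|\nabla u^-_{E\setminus B_{3/4}}|^2\le C(n,\sigma,M)$. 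Setting $\tilde h:=u^-_{E\setminus B_{3/4}}$, the orthogonality of the harmonic replacement turns the left side into $\int_{B_1}|\nabla(\tilde h-u^-)|^2$, so $\|\nabla(\tilde h-u^-)\|_{L^2(B_1)}\le C(n,\sigma,M)$. Since $E\setminus B_{3/4}$ does not meet $B_{3/4}$ and $\tilde h\ge0$, the function $\tilde h$ is harmonic in $B_{3/4}$; and $\tilde h-u^-\in H^1_0(B_1)$, so by Poincar\'e $\tilde h=\tilde h-u^-$ has small $L^2$ norm on $E\cap B_{3/4}$, where $u^-=0$. Because $|E\cap B_{3/4}|\ge|E\cap B_{1/2}|\ge\delta$ by the first part, this forces $\inf_{B_{5/8}}\tilde h\le C(n,\sigma,M)$, and Harnack's inequality for the nonnegative harmonic $\tilde h$ gives $\|u^-\|_{L^\infty(B_{1/2})}\le\sup_{B_{1/2}}\tilde h\le K(n,\sigma,M)$.

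The main obstacle I expect is the density estimate: one has to make sure the Dirichlet correction $C\rho^{-2}a(\rho)M^2$ is genuinely lower order than $a(\rho)^{(n-\sigma)/n}$ down to arbitrarily small scales, which is what forces the use of the scaled form of Lemma~\ref{Dbe2.bis} together with the dichotomy above, and is exactly why $\delta$ ends up depending on $M$ (for large $M$ the threshold radius is of order one); granting that, the geometric core is the $\sigma$-minimal set iteration of~\cite{crs}. For $u^-$ the delicate point is the energy/orthogonality identity showing that deleting $E$ from $B_{3/4}$ relaxes the constraint $u^-=0$ on $E$ only by an amount controlled in terms of $n,\sigma,M$, so that $u^-$ is pinned near a nonnegative harmonic function, to which Harnack applies thanks to the density of $E$ established in the first part.
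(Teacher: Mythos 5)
Your strategy is essentially the paper's: remove $E\cap B_\rho$, control the change in Dirichlet energy by Lemma~\ref{Dbe2.bis}, use the fractional Sobolev/isoperimetric inequality for the lower bound on the perimeter change, run a De~Giorgi-type iteration for the density, and then use Poincar\'e plus Harnack's inequality (applied to the subharmonic-majorant-turned-harmonic $\bar u^-$) for the $L^\infty$ bound on $u^-$. The $u^-$ argument you give is a correct reformulation of the paper's: the orthogonality identity $\int|\nabla(\tilde h-u^-)|^2=\int|\nabla u^-|^2-\int|\nabla\tilde h|^2$ is exactly the paper's use of \eqref{eleq1}, and your Poincar\'e/Harnack step matches the paper's $c\delta(\sup_{B_{1/2}}\bar u^-)^2$ bound.

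Where you diverge is in the density estimate, and what you add there is both unnecessary and slightly off in the details. The paper only ever needs $\rho\in[1/4,3/4]$, where Lemma~\ref{Dbe2.bis} applies \emph{directly} with no rescaling and no $\rho^{-2}$ weight; the De~Giorgi iteration runs downward from $\rho=1/2$ to $\rho=1/4$ along $t_k=\tfrac14+2^{-k}$ and yields $|E\cap B_{1/4}|=0$ if $|E\cap B_{1/2}|<\delta$, contradicting $0\in\partial E$. That is the whole argument. Your introduction of the rescaled lemma (producing the $\rho^{-2}M^2$ term) and the dichotomy over $\rho_*\in[1/4,1/2]$ are not needed, and they create a genuine soft spot: in the ``otherwise'' branch you invoke the density inequality on an interval $(\rho_1,1/2]$ with $\rho_1<1/4$, but the hypothesis $a(\rho)\leq(c'\rho^2M^{-2})^{n/\sigma}$ used for absorption was only established for $\rho\in[1/4,1/2]$ and does not propagate to $\rho<1/4$ by monotonicity of $a$ (the right-hand side decreases as $\rho\downarrow0$ while $a$ only decreases to a positive limit). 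You also state the iteration output as a direct positive lower bound $c(n,\sigma)(1/2-\rho_1)^n$ rather than as the contradiction ``$a$ vanishes at some positive radius, impossible since $0\in\partial E$'' which is what actually comes out; the lower bound $\delta$ then depends on $M$ through the absorption threshold, as the lemma says. If you simply restrict to $\rho\in[1/4,1/2]$, absorb $CM^2V_\rho$ into $cV_\rho^{(n-\sigma)/n}$ under the hypothesis $V_{1/2}<\delta(M)$, and run the iteration to a contradiction, you get the paper's clean proof with no rescaling and no dichotomy.
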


\begin{proof} First we prove the density estimate. For each $ \rho \in[\frac 14 , \frac 34]$, set
$$V_ \rho=|E\cap B_\rho|, \quad a(\rho)=\mathcal H ^{n-1}(E \cap \p B_\rho).$$
and assume by contradiction that $V_{1/2} < \delta$ small.

For each such $\rho$ we consider $\bar u$ the perturbation of $u$ which has as positive set $E \setminus A$ with $A:=E \cap B_\rho$, that is
$$\bar u^+:=u^+_{E^c \cup A}, \quad \bar u^-:=u^-_{E \setminus A} .$$
{F}rom the minimality of $(u,E)$ we find
\begin{equation}\label{ei}
\Per(E,B_1) -\Per (E \setminus A, B_1) \le \int_{B_1} |\nabla \bar u|^2 -|\nabla u|^2  dx.
\end{equation}
Since (see \eqref{eleq1})
\begin{align}\label{44}
\int_{B_1} |\nabla \bar u|^2 -|\nabla u|^2  dx&
=\int_{B_1} |\nabla \bar u^+|^2 -|\nabla u^+|^2  dx - \int_{B_1} |\nabla (\bar u^--u^-)|^2 \, dx \\
\nonumber & \le \int_{B_1} |\nabla \bar u^+|^2 -|\nabla u^+|^2  dx,
\end{align}
we use Lemma ~\ref{Dbe2.bis} and the definition of $\Per$ (see \eqref{P}) and we conclude that
$$L(A, E^c)-L(A, E \setminus A) \le C M^2 |A|. $$
Hence
\begin{equation} \label{41}
L(A,A^c) \le 2 L(A, E \setminus A) + C M^2 |A| \le 2 L(A,B_\rho^c) + C M^2 V_{\rho} .
\end{equation}
We estimate the left term by applying Sobolev inequality 
(see, e.g., Theorem~7 in~\cite{sv}): we obtain that
$$V_\rho^\frac{n-\sigma}{n}=\|\chi_A\|_{L^\frac{2n}{n-\sigma} (\R^n)}^2  \le C \| \chi_A \|^2_{H^{\sigma/2}(\R^n)}=C L(A, A^c).$$
If $x \in B_\rho$ then
$$\int_{\mathcal B_\rho^c} \frac{1}{|x-y|^{n+\sigma}}dy \le C \int_{\rho-|x|}^\infty\frac{1}{r^{n+\sigma}}r^{n-1}d r \le
C(\rho-|x|)^{-\sigma},$$
hence integrating in the set $A$ we obtain
$$L(A,B_\rho^c)\le C \int_0^\rho a(r)(\rho-r)^{-\sigma} \, dr.$$
We use these inequalities into \eqref{41} and the assumption that $V_\rho \le \delta$ is sufficiently small to find
$$V_\rho^{\frac{n-\sigma}{n}} \le C \int_0^\rho a(r)(\rho-r)^{-\sigma} \, dr.$$
Integrating the inequality above between $\frac 14$ and $t\in [\frac 14 , \frac 12]$ gives
\begin{equation}{\label{e2.6}}
\int_{1/4}^t V_\rho ^{\frac{n-\sigma}{n}}d\rho \le C t^{1-\sigma} \int_0^t a(r)dr \le C V_t.
\end{equation}
The proof is now a standard De Giorgi iteration: let
$$t_k=\frac{1}{4}+\frac{1}{2^k},\ \ \ \ v_k=V_{t_k},$$ and notice that
$t_2=\frac12$ and $t_\infty=\frac{1}{4}$. Equation (\ref{e2.6}) yields
$$2^{-(k+1)}v_{k+1}^\frac{n-\sigma}{n} \le C v_k.$$
Since $v_2 < \delta$, that is conveniently small, we obtain $v_k \to 0$ as $k \to \infty$. Thus $V_{1/4}=0$ and we contradict that $0 \in \p E$.

For the bound on $u^-$ we write the energy inequality for $\rho=\frac 34$ and we estimate also the negative term in \eqref{44} by Poincare inequality
$$\int_{B_1}|\nabla (\bar u^--u)|^2 dx \ge c \int_{B_1}|\bar u^- -u^-|^2 dx \ge c \int_{E \cap B_{1/2}}|\bar u^-|^2 dx \ge c \delta (\sup_{B_{1/2}}\bar u^-)^2,$$
where in the last inequality we used that $u^-$ is harmonic in $B_{3/4}$.

We have
$$0 \le L(A,E^c) \le L(A,E\setminus A) + CM^2 V_\rho - c \delta (\sup_{B_{1/2}} \bar u^-)^2 $$
and the desired conclusion follows since
\begin{equation*}
L(A, E\setminus A) \le L(B_\rho, B_\rho^c) \le C,  \quad V_\rho \le C, \quad \mbox{and} \quad u^- \le \bar u^-.
\qedhere\end{equation*}
\end{proof}

If $(u,E)$ is a minimizing pair in $B_r$ then the rescaled pair $(u_r,E_r)$ is minimizing in $B_1$ with
\begin{equation}\label{res}
u_r(x):=r^{\frac \sigma 2 -1} u(rx), \quad \quad E_r:=r^{-1} E.
\end{equation}
Let $$\lambda_r^+:=\|u_r^+\|_{L^\infty(B_1)}=r^{\frac \sigma 2 -1}\|u^+\|_{L^\infty(B_r)},$$
and define $\lambda_r^-$ similarly.

If either $\lambda_r^+$ or $\lambda_r^-$ is less than $1$ then, by Lemma \ref{d88d99.99} with $M=1$,
 $$\lambda_{r/2}^+\le C, \quad \lambda_{r/2}^- \le C, \quad \mbox{and} \quad c \le \frac{|E\cap B_r|}{|B_r|} \le 1-c, $$ with $c$, $C$ constants depending on $\sigma$ and $n$. Theorem \ref{H} follows provided the inequalities above hold for all small $r$. Thus, in order to prove Theorem \ref{H} it remains to show that for all $r \le r_0$ either $\lambda_r^+ \le 1$ or $\lambda_r^- \le 1$. This follows from the next lemma which is a consequence of the
Alt-Caffarelli-Friedman monotonicity formula in \cite{acf}.

\begin{lemma}\label{oa}
Let $(u,E)$ be a minimizing pair in $B_1$, and assume $0 \in \p E$. Then
$$\lambda^+_r  \, \lambda_r^- \le C r^\sigma \|u\|_{L^2(B_1)}^2, \quad \quad \quad \forall r \in (0, 1/4],$$
with $C$ depending only on $n$.
\end{lemma}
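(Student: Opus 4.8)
The key is the Alt–Caffarelli–Friedman monotonicity formula applied to $u^+$ and $u^-$, so the plan is to estimate the ACF functional
$$\Phi(r):=\frac{1}{r^{2\sigma}}\left(\int_{B_r}\frac{|\nabla u^+|^2}{|x|^{n-2}}\,dx\right)\left(\int_{B_r}\frac{|\nabla u^-|^2}{|x|^{n-2}}\,dx\right)$$
(noting that the natural scaling exponent $\frac\sigma2-1$ of the problem forces the prefactor to be $r^{-2\sigma}$ rather than the classical $r^{-4}$). Since $u^+$ and $u^-$ are nonnegative subharmonic functions with disjoint supports and both vanish at $0\in\p E$, the classical ACF theorem from \cite{acf} gives that $\Phi$ is monotone nondecreasing in $r$ on $(0,1)$, hence $\Phi(r)\le\Phi(1/2)\le C\|\nabla u\|_{L^2(B_{3/4})}^2\cdot\|\nabla u\|_{L^2(B_{3/4})}^2$, which by interior energy estimates is bounded by $C\|u\|_{L^2(B_1)}^4$.

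First I would record the scaling: if $u$ is a minimizer then $u_r(x)=r^{\sigma/2-1}u(rx)$ is a minimizer in $B_1$, and one checks directly that the ACF functional $\Phi$ transforms so that $\Phi_u(\rho r)=\Phi_{u_r}(\rho)$ with the $r^{-2\sigma}$ normalization — this is the computation that pins down the exponent. Next I would lower-bound $\Phi(r)$ from below by $\lambda_r^+\lambda_r^-$ times a dimensional constant times $r^{-\sigma}$ (roughly): the point is that a nonnegative harmonic function in $E\cap B_r$ attaining sup value $\lambda_r^+$ on $\p B_{r/2}$, say, must have $\int_{B_r}|\nabla u^+|^2|x|^{2-n}\,dx \ge c (\lambda_r^+)^2 r^{\sigma}$ — here one uses harmonicity of $u^+$ inside $E$ together with the density estimate $|E\cap B_r|\ge cr^n$ from Lemma~\ref{d88d99.99} (after a preliminary rescaling so that the hypotheses $\lambda^\pm\le 1$ are available, or arguing directly that one of the two quantities $\lambda_r^\pm$ controls the integral). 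Combining the upper bound on $\Phi$ from monotonicity with this lower bound, and replacing $r$ by $r/2$ to absorb constants, yields
$$(\lambda_r^+\lambda_r^-)^2 \le C r^{-2\sigma}\cdot r^{2\sigma}\cdot\|u\|_{L^2(B_1)}^4 = C \|u\|_{L^2(B_1)}^4,$$
wait — more carefully one gets $\lambda_r^+\lambda_r^- \cdot r^{-\sigma}\cdot(\text{something}) \le \Phi(r)\le C\|u\|_{L^2}^4$, and after tracking the $r$-powers through the rescaling this gives the stated $\lambda_r^+\lambda_r^-\le Cr^\sigma\|u\|_{L^2(B_1)}^2$.

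The main obstacle I expect is the \emph{lower bound} $\int_{B_r}|\nabla u^\pm|^2|x|^{2-n}\,dx\ge c(\lambda_r^\pm)^2 r^\sigma$, i.e.\ producing a clean Caccioppoli-type inequality with the ACF weight $|x|^{2-n}$. One cannot just apply a crude $L^\infty\to$ gradient estimate because the weight is singular at the origin and $u^\pm$ need not be Lipschitz there; instead I would work on dyadic annuli $B_{2^{-j}r}\setminus B_{2^{-j-1}r}$, use that the sup of $u^+$ over $B_\rho$ is comparable (via Harnack/subharmonicity and the density estimate controlling how much of $B_\rho$ lies in $E$) to $\rho^{1-\sigma/2}$ times a fixed constant once $\lambda^+$ is normalized, and sum the resulting geometric series — the density estimate is what prevents $u^+$ from being forced to zero on too large a portion of each annulus. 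The ACF monotonicity itself is a black box from \cite{acf}; the only real work is matching its scaling to the nonstandard exponent of this functional and extracting the $\lambda_r^\pm$ from the weighted Dirichlet integrals.
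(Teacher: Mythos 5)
Your basic instinct — apply the Alt--Caffarelli--Friedman monotonicity formula to $u^+$, $u^-$ — is exactly the paper's strategy, but the power counting you built around it is wrong in a way that breaks the argument.

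\textbf{The normalization $r^{-2\sigma}$ is incorrect.} The ACF theorem provides monotonicity of $\Psi(r)=r^{-4}\prod_{\pm}\int_{B_r}|\nabla u^{\pm}|^2|x|^{2-n}\,dx$, not of your $\Phi(r)=r^{-2\sigma}\prod(\dots)$. Your $\Phi$ is indeed nondecreasing, but only trivially so (it is $r^{4-2\sigma}\Psi(r)$, a product of two increasing functions), and the bound it yields is strictly weaker. Running your own chain carefully: from $\Phi(r)\le\Phi(1/4)\le C\|u\|_{L^2(B_1)}^4$ and the lower bound $\int_{B_r}|\nabla u^{\pm}|^2|x|^{2-n}\,dx\ge c\|u^{\pm}\|_{L^\infty(B_{r/2})}^2$ (which carries \emph{no} extra $r$-power; see below), one gets $\|u^+\|_{L^\infty(B_{r/2})}\|u^-\|_{L^\infty(B_{r/2})}\le Cr^{\sigma}\|u\|_{L^2}^2$, and hence $\lambda^+_{r/2}\lambda^-_{r/2}=(r/2)^{\sigma-2}\|u^+\|\|u^-\|\le Cr^{2\sigma-2}\|u\|_{L^2}^2$. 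Since $\sigma\in(0,1)$, the exponent $2\sigma-2$ is negative and this blows up as $r\to0$ — it does not yield the statement. With the correct $r^{-4}$ normalization one instead gets $\|u^+\|\|u^-\|\le Cr^2\|u\|_{L^2}^2$, and multiplying by $r^{\sigma-2}$ from the definition of $\lambda_r^{\pm}$ gives exactly $\lambda_r^+\lambda_r^-\le Cr^{\sigma}\|u\|_{L^2}^2$. The $r^\sigma$ enters through the rescaling $\lambda_r^{\pm}=r^{\sigma/2-1}\|u^{\pm}\|_{L^\infty(B_r)}$, not through a renormalized ACF functional. (Incidentally, the normalization that is scale-invariant under $u\mapsto u_r$ is $r^{2\sigma-4}$, not $r^{-2\sigma}$; but neither is what ACF gives, and neither is what you should use.)

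\textbf{The lower bound is easier than you fear, and your proposed route is circular.} You worry about dyadic annuli plus density estimates to get a Caccioppoli-type inequality with the singular weight, and you want to invoke the density estimate of Lemma~\ref{d88d99.99}. But the density estimates are established via this lemma (they are part of the proof of Theorem~\ref{H}, which needs Lemma~\ref{oa} to guarantee that one of $\lambda_r^{\pm}$ falls below $1$), so you cannot lean on them here. The clean tool you are missing is the identity $\triangle(u^{\pm})^2=2|\nabla u^{\pm}|^2$, valid because $u^{\pm}$ are harmonic replacements (Lemma~2.3 of~\cite{acks}). Pairing this with the Green's function of $B_r$ and using $u^{\pm}(0)=0$ (since $0\in\p E$) gives the two-sided comparison
$$c\|u^{\pm}\|_{L^\infty(B_{r/2})}^2\le c\,\avg_{B_r}(u^{\pm})^2\le\int_{B_r}\frac{|\nabla u^{\pm}|^2}{|x|^{n-2}}\,dx\le C\,\avg_{B_{2r}}(u^{\pm})^2,$$
with the left bound also using subharmonicity of $u^{\pm}$. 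No dyadic decomposition and no density estimate is needed.

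\textbf{A preliminary step you omitted.} To legitimately apply ACF one must first know that $u^+$, $u^-$ are continuous with $u^+=u^-=0$ on $\p E$; the paper proves this by a short contradiction argument (if, say, $u^-(0)>0$, the density of $E$ near $0$ would go to $0$, forcing $u^+$ to vanish to infinite order, whence $\lambda_r^+\le1$ for small $r$, and then the discussion after Lemma~\ref{d88d99.99} recovers positive density of $E$ — contradiction). You should include this, or at least note it.

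In short: switch to the standard $r^{-4}$ ACF functional, replace the dyadic/density-estimate route to the lower bound with the subharmonicity identity $\triangle(u^{\pm})^2=2|\nabla u^{\pm}|^2$, record the continuity of $u^{\pm}$ first, and the power of $r$ falls out correctly from $\lambda_r^{\pm}=r^{\sigma/2-1}\|u^{\pm}\|_{L^\infty(B_r)}$.
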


\begin{proof}
Similar arguments appear in Section 2 of \cite{acks}. We sketch the proof below.

First we prove that $u^+$ and $u^-$ are continuous. For this we need to show that $u^+=u^-=0$ on $\p E$.
Assume by contradiction that, say for simplicity $u^-(0)>0$. Since
$$\limsup_{x \to 0}u^-(x)=u^-(0),$$
we see that the density of $E$ in $B_r$ tends to $0$ as $r\to 0$. Since $u^+\ge 0$ is subharmonic and $u^+=0$ a.e. in $E^c$ it follows that $u^+$ must vanish of infinite order at the origin. Then $\lambda_r^+ \le 1$ for all small $r$ and by the discussion above $E$ has positive density in $B_r$ for all small $r$ and we reach a contradiction.

Since $u^+$ and $u^-$ are continuous subharmonic functions with disjoint supports we can apply  Alt-Caffarelli-Friedman monotonicity formula,
according to which
$$\Psi(r):=\frac{1}{r^4}\int_{B_r}\frac{|\nabla u^+|^2}{|x|^{n-2}}\,dx
\int_{B_r}\frac{|\nabla u^-|^2}{|x|^{n-2}}\,dx,$$
is increasing in $r$.

{F}rom the definition of the harmonic replacement it follows that (see Lemma 2.3 in \cite{acks} for example) $$\triangle (u^+)^2=2|\nabla u^+|^2$$ and we find
$$c \| u^+ \|_{L^\infty(B_{r/2})}^2 \le c \, \, \avg_{B_r} (u^+)^2 dx \le \int_{B_r}\frac{|\nabla u^+|^2}{|x|^{n-2}}\,dx \le C \, \avg_{B_{2r}} (u^+)^2 dx.$$
We use these bounds in the monotonicity formula above and obtain the conclusion.~\end{proof}

\section{Improvement of flatness for the free boundary}

In this section we obtain the Euler-Lagrange equation at points on the free boundary and also we show that if $\p E$ is sufficiently flat in some ball $B_r$ then $\p E$ is a $C^{1,\gamma}$ graph in $B_{r/2}$. The proofs are similar to the corresponding proofs for nonlocal minimal surfaces in \cite{crs}. The difference is that when we perturb $E$ by a set $A$, the change in the nonlocal perimeter is bounded by the change in the Dirichlet integrals (instead of $0$), and by Section 2, this can be bounded in terms of $|A|$.

Our main theorem on this topic is the following.

\begin{theorem}{\label{flat}}
Assume $(u,E)$ is minimal in $B_1$ and that in $B_1$
$$\{x_n > \eps_0\} \subset E \subset \{x_n > -\eps_0 \}, \quad \quad \|u\|_{L^\infty} \le 1,$$
for some $\eps_0>0$ small depending
on $\sigma$ and $n$. Then $\p E \cap B_{1/2}$ is a $C^{1, \gamma}$ graph in the $e_n$
direction and it satisfies the Euler-Lagrange equation in the viscosity sense
\begin{equation}\label{EL}
\int_{\R^n} \frac{\chi_{E^c}-\chi_E}{|y|^{n+\sigma}} \, dy = |\nabla u^+(x)|^2-|\nabla u^-(x)|^2, \quad \quad x \in \p E.
\end{equation}
\end{theorem}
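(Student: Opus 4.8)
The plan is to follow the improvement-of-flatness scheme of \cite{crs} for $\sigma$-minimal surfaces, with the modification that the comparison surface is now coupled to the harmonic replacement of $u$. First I would derive the Euler-Lagrange inequalities at the free boundary: given a minimizer $(u,E)$ and a perturbation $E'$ obtained by adding (or removing) a set $A$ contained in a small ball around a free boundary point, minimality gives
\begin{equation*}
\Per(E',B_1)-\Per(E,B_1)\le \int_{B_1}|\nabla \bar u|^2-|\nabla u|^2\,dx,
\end{equation*}
and by the decomposition \eqref{44} together with Lemma \ref{Dbe2.bis} the right-hand side is controlled by $CM^2|A|$. Letting $|A|\to 0$, the Dirichlet contribution becomes a genuine correction term of order $|A|$, while the dominant term on the left is the first variation of $\Per$, i.e. the fractional curvature integral in \eqref{EL}. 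When $\partial E$ is $C^{1,\gamma}$ near the point, Lemma \ref{c1g} makes this precise and yields exactly \eqref{EL} with the matched normal derivatives $|\nabla u^+|^2-|\nabla u^-|^2$; for merely flat $E$ one gets two-sided viscosity inequalities with an error that is lower order compared to the curvature term.

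Next I would set up the iterative geometric argument. After rescaling via \eqref{res} one may assume $\|u\|_{L^\infty(B_1)}\le 1$ and that $\partial E$ lies in a slab of width $\eps$ around $\{x_n=0\}$. The goal is to show that in $B_{1/2}$ (or some fixed fraction) $\partial E$ lies in a slab of width $\frac12\eps$ after a rotation, provided $\eps\le\eps_0$. This is proved by compactness/contradiction: if no such improvement held, one extracts a sequence of minimizers with $\eps_k\to 0$ whose rescaled ``flatness graphs'' $(\partial E_k-e_n)/\eps_k$ converge; the limit satisfies a linearized equation. The key point is that the Dirichlet term, being of order $|A|\sim\eps_k$, disappears in the blow-up relative to the leading curvature term of the $\sigma$-perimeter, so the limiting object solves the same linearized equation as for $\sigma$-minimal surfaces in \cite{crs} — a fractional-Laplacian type equation for the graph. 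Interior regularity for that linear equation gives improvement of flatness at the limit, contradicting the assumption. Iterating the improvement across dyadic scales produces a $C^{1,\gamma}$ modulus for the unit normal, hence $\partial E\cap B_{1/2}$ is a $C^{1,\gamma}$ graph in the $e_n$ direction.

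The main obstacle, and the place where genuine work beyond \cite{crs} is needed, is tracking the Dirichlet perturbation uniformly through the compactness argument. One has to know that along the contradiction sequence $\|u_k\|_{L^\infty}$ stays bounded near the free boundary (which is where Theorem \ref{H} and Lemma \ref{oa} are invoked) and that the harmonic replacements do not concentrate energy; this is exactly what Lemmas \ref{Dbe2.bis}–\ref{c1g} are designed to guarantee, the density hypothesis in Lemma \ref{Dbe2} being available because the improvement step keeps $E$ one-sided near the point. A secondary technical point is that, once $C^{1,\gamma}$ regularity is in hand, one must upgrade the viscosity form of \eqref{EL} to hold at every free boundary point, which follows by applying Lemma \ref{c1g} at each such point together with the first variation computed above. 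With these ingredients assembled, the statement of Theorem \ref{flat} follows.
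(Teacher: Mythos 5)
Your proposal follows the same strategy the paper uses: establish the one-sided Euler--Lagrange inequality at touching points by bounding the Dirichlet correction via Lemma \ref{Dbe2.bis} (this is Lemma \ref{ELvs} in the paper, following Theorem 5.1 of \cite{crs}), run the Harnack/compactness iteration of \cite{crs} to show that the blow-up graph solves $\triangle^{(\sigma+1)/2}w=0$, conclude linearity and hence improvement of flatness across dyadic scales (Proposition \ref{min_flat}), and finally invoke Lemma \ref{c1g} to pin down the exact form of \eqref{EL} once $C^{1,\gamma}$ regularity is known. So the approach matches.

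One point, though, deserves sharper treatment, because as stated it would not quite close the argument. You say ``the Dirichlet term, being of order $|A|\sim\eps_k$, disappears in the blow-up relative to the leading curvature term.'' But the curvature term in the improvement-of-flatness iteration is \emph{also} of order $|A|$; both sides of the energy inequality are first variations and scale the same way in $|A|$. What actually makes the Dirichlet contribution lower order is the prefactor $\|u\|^2_{L^\infty}$: after dividing by the flatness scale $a$, the Dirichlet term contributes $\frac{1}{a}C\|u\|^2_{L^\infty}$, and one needs this to vanish. The paper gets this from a comparison-principle bound $u^\pm\le Cr$ in $B_r$ (forced by the flatness of $\partial E$, since $u^\pm$ are harmonic and vanish on a nearly planar surface), so that in the rescaled picture $\|u\|_{L^\infty(B_1)}\le a^{\sigma/(2\alpha)}$, whence $\frac{1}{a}C\|u\|^2\le Ca^{(\sigma/\alpha)-1}\to 0$ precisely because one takes $\alpha<\sigma$. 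Neither Theorem \ref{H} nor Lemma \ref{oa}, which you cite for this, gives this decay: Theorem \ref{H} only yields a uniform $C^{1-\sigma/2}$ bound, and Lemma \ref{oa} controls the \emph{product} $\lambda_r^+\lambda_r^-$, not each factor. In the flat setting the needed decay is cheaper and comes from the barrier argument; you should isolate this step, and note the resulting restriction $\alpha<\sigma$ on the improvement exponent, as it is where the coupling to $u$ genuinely enters the \cite{crs} machinery.
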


The constant $\gamma$ above depends on $n$ and $\sigma$. The Euler-Lagrange equation in the viscosity sense means that at any point $x$ where $\p E$ has a tangent $C^2$ surface included in $E$ (respectively $E^c$) we have $\ge$ (respectively $\le$) in \eqref{EL}.

First we bound the $\sigma$-curvature of $\p E$ at points $x$ that have a tangent ball from $E^c$.

\begin{lemma}\label{ELvs}
Let $(u,E)$ be a minimizing pair in $B_1$. Assume that $B_{1/4}(-e_n/4)$ is tangent from exterior to $E$ at $0$. Then
$$\int_{\R^n} \frac{\chi_{E^c}-\chi_E}{|x|^{n+\sigma}} \, dx \le C \, \|u^+\|^2_{L^\infty(B_1)}$$
with $C$ depending on $n$ and $\sigma$. If moreover $\p E$ is a $C^{1,\gamma}$ surface near $0$ then
 $$\int_{\R^n} \frac{\chi_{E^c}-\chi_E}{|x|^{n+\sigma}} \, dx \le |\nabla u^+(0)|^2-|\nabla u^-(0)|^2.$$
\end{lemma}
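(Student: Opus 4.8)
The plan is to test the minimality of $(u,E)$ against competitors obtained by \emph{removing} from $E$ a thin sliver that collapses onto the point where $\partial E$ is touched from outside, and then to pass to the limit, combining the Dirichlet estimates of Section~2 with a first variation identity for the fractional perimeter.

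\emph{The general inequality.} First normalize: the affine map $z=2y+\tfrac12 e_n$ — a dilation by $2$ followed by a translation, hence, by \eqref{scaling} and the translation invariance of $J$, sending minimizing pairs in $B_1$ to minimizing pairs in $B_2(\tfrac12 e_n)\supset B_1$ — carries $(u,E)$ to a minimizing pair whose set misses $B_{1/2}$, whose contact point is $x_0:=\tfrac12 e_n\in\partial E\cap\partial B_{1/2}$, whose $L^\infty$ norm of $u^+$ is changed only by a factor depending on $\sigma$, and whose $\sigma$-curvature $\kappa^F_\sigma(x):=\int\frac{\chi_{F^c}-\chi_F}{|x-\cdot|^{n+\sigma}}$ at the contact point is changed by the factor $2^{-\sigma}$; so we may assume $E\cap B_{1/2}=\emptyset$ and argue at $x_0$. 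For small $t>0$ set $A_t:=E\cap\big(B_{1/2+t}(0)\setminus B_{1/2}(0)\big)\cap B_\delta(x_0)$, a closed sliver of $E$ contained in $B_{3/4}\setminus B_{1/2}$ which, by the density estimate \eqref{dense} of Theorem~\ref{H} applied at the free boundary points of the normalized pair, satisfies the density hypothesis of Lemma~\ref{Dbe2}; since $\partial E$ meets $\partial B_{1/2}$ at $x_0$, one has $A_t\to\{x_0\}$ as $t\to0$. Testing minimality against $(\bar u,E\setminus A_t)$, with $\bar u^+=u^+_{E^c\cup A_t}$ and $\bar u^-=u^-_{E\setminus A_t}$, and invoking \eqref{44} (so that only the harmonic replacement of $u^+$ contributes and the $u^-$-term has a favourable sign) together with Lemma~\ref{Dbe2}, gives
$$\Per(E,B_1)-\Per(E\setminus A_t,B_1)\ \le\ \int_{B_1}|\nabla\bar u|^2-|\nabla u|^2\,dx\ \le\ C|A_t|\,\|u^+\|^2_{L^\infty(B_1)}.$$
Since $A_t\Subset B_1$, an elementary manipulation of the double integrals turns the left side into $L(A_t,E^c)-L(A_t,E\setminus A_t)$, and the crux of the matter — the analogue here of the Euler--Lagrange computation for nonlocal minimal surfaces in~\cite{crs} — is the first variation identity
$$\lim_{t\to0}\frac{1}{|A_t|}\Big(L(A_t,E^c)-L(A_t,E\setminus A_t)\Big)=\int_{\R^n}\frac{\chi_{E^c}-\chi_E}{|x_0-x|^{n+\sigma}}\,dx ,$$
the one-sided tangent-ball condition being exactly what forces $\partial E$ to stay on one side of the smooth sphere $\partial B_{1/2}$ near $x_0$, so that the $\sim|A_t|\,t^{-\sigma}$ singular parts of the two $L$-terms cancel and only the nonlocal curvature survives. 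Dividing by $|A_t|$, letting $t\to0$, and undoing the normalization proves the first assertion.

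\emph{The sharp inequality.} When $\partial E$ is $C^{1,\gamma}$ near $x_0$ we no longer need the normalization or Lemma~\ref{Dbe2}: take $A_k\subset B_{\eps_k}(x_0)$, $\eps_k\to0$, the region caught between $\partial E$ and a $C^{1,\gamma}$ graph lying just inside $E$, as in Lemma~\ref{c1g}. Applying Lemma~\ref{c1g} with the roles of $E$ and $E^c$ interchanged (so that ``adding $A_k$ to $E^c$'' reads ``removing $A_k$ from $E$'') yields
$$\lim_{k\to\infty}\frac{1}{|A_k|}\int_{B_1}|\nabla\bar u_k|^2-|\nabla u|^2\,dx=|\nabla u^+(x_0)|^2-|\nabla u^-(x_0)|^2 ,$$
while the $C^{1,\gamma}$ regularity makes the $\sigma$-curvature a genuine principal value, continuous along $\partial E$ near $x_0$, so that $\tfrac1{|A_k|}\big(\Per(E,B_1)-\Per(E\setminus A_k,B_1)\big)\to\int\frac{\chi_{E^c}-\chi_E}{|x_0-x|^{n+\sigma}}\,dx$. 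Passing to the limit in the minimality inequality $\Per(E,B_1)-\Per(E\setminus A_k,B_1)\le\int_{B_1}|\nabla\bar u_k|^2-|\nabla u|^2\,dx$ gives the second assertion.

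\emph{Expected main obstacle.} Everything above is routine except the first variation identity: one must evaluate $L(A_t,E^c)-L(A_t,E\setminus A_t)$ as a difference of two quantities that individually diverge like $|A_t|\,t^{-\sigma}$, extract the surviving principal value, and identify it with the $\sigma$-curvature at the contact point, using only that $E$ lies outside the tangent ball there (and, for the sharp version, the $C^{1,\gamma}$ regularity, which also ensures the curvature is well defined and continuous). The Dirichlet term is harmless in this step: it contributes only the controlled right-hand sides $C|A_t|\,\|u^+\|^2_{L^\infty}$ and $|A_k|\big(|\nabla u^+(x_0)|^2-|\nabla u^-(x_0)|^2\big)+o(|A_k|)$ furnished by Lemmas~\ref{Dbe2} and~\ref{c1g}.
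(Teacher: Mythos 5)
Your overall plan — test minimality against competitors $E\setminus A$ that collapse onto the touching point, control the Dirichlet side via the Section~2 lemmas, and extract the $\sigma$-curvature from the perimeter side — is the right framework and is what the paper does. But there is a genuine gap at the step you yourself flag as the ``main obstacle'': the first-variation identity
$$\lim_{t\to0}\frac{1}{|A_t|}\Big(L(A_t,E^c)-L(A_t,E\setminus A_t)\Big)=\int_{\R^n}\frac{\chi_{E^c}-\chi_E}{|x_0-x|^{n+\sigma}}\,dx$$
for $A_t$ a one-sided annular sliver. This is not routine, and the mechanism you invoke (``the $\sim|A_t|\,t^{-\sigma}$ singular parts cancel'') does not work for your choice of $A_t$. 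Both $L(A_t,E^c)$ and $L(A_t,E\setminus A_t)$ genuinely blow up, and with $A_t=E\cap(B_{1/2+t}\setminus B_{1/2})\cap B_\delta(x_0)$ there is no symmetry between the contributions coming from the tangent ball $B_{1/2}\subset E^c$ and those coming from $E$ just outside $B_{1/2+t}$ to make the divergences cancel; indeed the mere tangent-ball hypothesis gives no control on $\partial E$ away from the tangent sphere, so the limit on the left need not exist, and the curvature on the right need not be finite a priori (the lemma's conclusion is precisely its finiteness). You are also proving more than needed: an equality, where the lemma asks only for an inequality.

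The paper handles exactly this difficulty by importing the reflection construction of Theorem~5.1 in~\cite{crs}: after dilating so that $E^c\supset B_2(-2e_n)$, it reflects radially through $\partial B_{1+\eps}(-e_n)$, takes $A=A^-\cup A^+$ with $A^+=T(A^-)\cap E$ the reflected companion of the inner sliver $A^-$, and sets $F=T(B_\delta\cap(E\setminus A))$. The splitting $\Per(E)-\Per(E\setminus A)=I_1+I_2+I_3$ then exploits the near-isometry of $T$: $I_1$ (the far-field contribution) converges to the truncated curvature, $I_2$ is controlled by $-C\delta^{1-\sigma}|A|-C\eps L(A^-,F)$ via the reflection, and $I_3\ge0$ is discarded. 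The residual term $\eps L(A^-,F)$ is then killed along a subsequence $\eps\to0$ by writing the energy inequality once more for the one-sided perturbation $\tilde u$, which is where the Dirichlet estimate (Lemma~\ref{Dbe2}) is used a second time. This two-sided, reflected construction is precisely the cancellation device your sliver lacks, and it yields only the needed lower bound for the perimeter difference rather than a limit. Your treatment of the $C^{1,\gamma}$ case is in line with the paper (there Lemma~\ref{c1g} gives the sharp Dirichlet asymptotics and the curvature is a genuine continuous principal value), but for the general inequality you would need to supply the reflection argument, not merely assert the first-variation identity.
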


\begin{proof} We follow closely the proof of Theorem 5.1 of \cite{crs}.

After a dilation we may assume that $E^c$ contains $B_2(-2e_n)$.
Fix $\delta>0$ small, and $\eps \ll \delta$.
Let $T$ be the radial reflection with respect to the sphere
$\partial B_{1+\eps}(-e_n)$

We define the sets:
$$A^-:=B_{1+\eps}(-e_n) \cap E, \quad A^+:=T(A^-) \cap E, \quad A:= A^- \cup A^+.$$
and let
$$F:=T(B_\delta \cap (E \setminus A)).$$
It is easy to check that
$F \subset E^c \cap B_\delta.$

Let $\bar u$ be the perturbation of $u$ which has as positive set $E \setminus A$ as in the proof of Lemma \ref{d88d99.99}. First we estimate the right hand side in the energy inequality \eqref{ei}. Let $\tilde u$ be the perturbation of $u$ which has as positive set $E \setminus A^-$. We use Lemmata \ref{Dbe2.bis}
and~\ref{Dbe2} and we obtain
\begin{align*}
\int_{B_1}|\nabla \bar u|^2-|\nabla u|^2 dx&=\int_{B_1}|\nabla \bar u|^2-|\nabla \tilde u|^2 dx + \int_{B_1}|\nabla \tilde u|^2-|\nabla u|^2 dx  \\
& \le \int_{B_1}|\nabla \bar u^+|^2-|\nabla \tilde u^+|^2 dx + \int_{B_1}|\nabla \tilde u^+|^2-|\nabla u^+|^2 dx\\
 & \le C |A| \, \, \, \|u^+\|_{L^\infty(B_1)}^2.
\end{align*}
Notice that $$\tilde u^+=u^+_{E^c \cup A^-}, \quad \bar u^+= u^+_{E^c \cup A^- \cup T(A^-)}$$
and, by Theorem \ref{H}, $T(A^-)$ satisfies the uniform density property of Lemma \ref{Dbe2}.

Now we consider the left hand side of the energy inequality \eqref{ei}:
$$\Per(E,B_1) -\Per (E \setminus A, B_1)= L(A,E^c)-L(A,\mathcal{C}(E \setminus A)= $$
$$ [L(A, E^c \setminus B_\delta) -L(A,E \setminus B_\delta)]+
[L(A, F)-L(A, T(F))] + L(A,(E^c \cap B_\delta) \setminus F) $$
$$:= I_1+I_2 +I_3 \ge I_1+I_2.$$
We estimate $I_1$ and $I_2$ as in \cite{crs}, and we conclude that
$$\left | \frac{1}{|A|}I_1 -\int_{\mathbb{R}^n \setminus B_\delta}
\frac{\chi_{E^c}-\chi_{E}}{|x|^{n+s}}dx \right | \le C \eps^{1/2} \delta^{-1-s}$$
and
$$I_2 \ge-C \delta^{1-s} |A|- C \eps L(A^-, F).$$
It remains to show that for all small $\eps$
\begin{equation}\label{50}
L(A^-,F) \le C L(A^-, B^c_{1+\eps}(-e_n))
\end{equation}
since then, as in Lemma 5.2 of \cite{crs}, there exists a sequence of $\eps \to 0$ such that
$$\eps L(A^-, F) \le C \eps^\eta |A^-|,$$
and our result follows.

 We prove \eqref{50} by writing the energy inequality for $\tilde u$ defined above. We have $L(A^-,F) \le L(A^-, E^c)$ and
 \begin{align*}
 L(A^-, E^c) & \le L(A^-, E \setminus A^-) + \int_{B_1}|\nabla \tilde u^+|^2 -|\nabla u^+|^2 dx\\
 & \le L(A^-, B^c_{1+\eps}(-e_n)) + C |A^-| \, \|u\|^2_{L^\infty(B_1)} \\
 & \le 2L(A^-, B^c_{1+\eps}(-e_n)).
\end{align*}
where the last inequality holds for all small $\eps$.

In the case when $\p E$ is a $C^{1,\gamma}$ surface near $0$ we can estimate the change in the Dirichlet integral by Lemma \ref{c1g} and obtain the second part of our conclusion. \end{proof}

With the results already obtained,
Theorem \ref{flat} now
follows easily from the improvement of flatness property of $\p E$:

\begin{prop}{\label{min_flat}}
Assume $(u,E)$ is a minimal pair in $B_1$ and
fix $0<\alpha<s$. There exists  $k_0$ depending on $s$, $n$ and
$\alpha$ such that if
$$0 \in \p E, \quad  \quad \|u\|_{L^\infty(B_1)} \le 1,\quad \mbox{and for all balls $B_{2^{-k}}$ with $0 \le k \le k_0$ we have}$$
\begin{equation}\label{k}
\{x \cdot e_k > 2^{-k(\alpha +1)}\} \subset E \subset \{x \cdot e_k > -2^{-k(\alpha +1)}\}, \quad \quad |e_k|=1,
\end{equation}
then there exist vectors $e_k$ for all $k \in \mathbb{N}$ for
which the inclusion above remains valid.
\end{prop}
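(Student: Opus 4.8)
The plan is to obtain Proposition~\ref{min_flat} by iterating a \emph{one-step improvement of flatness}, following the compactness scheme of Savin and of \cite{crs}; the only ingredient not already in \cite{crs} is that each competitor now changes the Dirichlet integral, but by Section~2 this change is bounded by $C|A|\,\|u^+\|_{L^\infty(B_1)}^2$ (and, by the symmetry of $J$ under $u\mapsto-u$, $E\mapsto E^c$, by $C|A|\,\|u^-\|_{L^\infty(B_1)}^2$ for the opposite competitor). Concretely I would first prove: for every $\beta\in(\alpha,1)$ there exist $\theta\in(0,1/4)$ and $\eps_1>0$, depending only on $n,\sigma,\beta$, such that if $(u,E)$ is minimal in $B_1$ with $0\in\p E$, $\|u\|_{L^\infty(B_1)}\le1$, and $\p E\cap B_1\subset\{|x\cdot e|\le\eps\}$ for some unit vector $e$ and some $\eps\le\eps_1$, then $\p E\cap B_\theta\subset\{|x\cdot e'|\le\theta^{1+\beta}\eps\}$ for some unit vector $e'$ with $|e'-e|\le C\eps$. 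Granting this, the inclusion \eqref{k} is extended to all $k$ by the standard summation argument of \cite{crs,acks}: one applies the one-step statement over the dyadic scales below $2^{-k_0}$, the relative flatness contracting by the factor $\theta^{\beta}<1$ at each step (so that the smallness condition is preserved once $k_0$ is large, and the tilts $\sum_j C\theta^{j\beta}\eps$ sum to a small quantity), and since $\beta>\alpha$ the resulting geometric rate dominates the rate $2^{-k(\alpha+1)}$ required by \eqref{k}.

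The one-step statement I would prove by compactness and contradiction, as in Section~6 of \cite{crs}. If it fails, for each $j$ there is a minimal pair which satisfies the flatness hypothesis but not the conclusion; rescaling each such pair at the first scale $r_j=2^{-\bar k_j}$ ($\bar k_j\to\infty$) at which the conclusion is violated, one obtains minimal pairs $(u_j,E_j)$ in $B_1$ with $0\in\p E_j$, $\p E_j$ contained in a slab $\{|x_n|\le\eps_j\}$ in $B_1$ with $\eps_j\to0$, and the improved inclusion failing in $B_\theta$ for every choice of unit vector. Using the density estimates of Theorem~\ref{H} together with a Harnack inequality for the flatness of $\p E_j$ — obtained from the $\sigma$-curvature barriers of \cite{crs}, now carrying the additional term $C|A|\,\|u_j^\pm\|_{L^\infty(B_1)}^2$ and absorbing it — one shows that the vertically dilated boundaries
\[
\p E_j^\ast:=\big\{(x',x_n/\eps_j)\ :\ (x',x_n)\in\p E_j,\ |x'|<\tfrac12\big\}
\]
converge in the local Hausdorff sense to the graph of a function $w$ on $B'_{1/2}$ with $w(0)=0$. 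One then identifies $w$ as a viscosity solution of the linearized equation $(-\Delta)^{(1+\sigma)/2}w=0$: a smooth function touching $w$, say from above at an interior point, is lifted to a competitor for $\p E_j$ for large $j$, the set $A$ of the perturbation satisfies the density hypotheses of Lemmata~\ref{Dbe2.bis}--\ref{Dbe2} by Theorem~\ref{H}, and comparing $\sigma$-perimeters — using the $\sigma$-curvature expansion of \cite{crs} and the viscosity Euler--Lagrange inequality of Lemma~\ref{ELvs}, rescaled and divided by $\eps_j$ — pins down the sign of $(-\Delta)^{(1+\sigma)/2}$ on the touching function, provided the Dirichlet contribution vanishes in the limit. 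Since $w$ is then a viscosity solution of a translation-invariant linear equation it is $C^{1,\gamma}$, so on $B_\theta$ the graph of $w$ lies within $C\theta^{2}$ of an affine function; undoing the dilation, this forces $\p E_j\cap B_\theta$ into a slab of width $\le\theta^{1+\beta}\eps_j$ for $\theta$ small and $j$ large, contradicting the failure of the improved inclusion and proving the one-step statement.

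The crux, and the only genuinely new point, is that the Dirichlet perturbation disappears in the limit. By the choice of the rescaling (at the first scale at which the conclusion fails) one has $\eps_j\simeq r_j^{\alpha}$, whereas an order-$\eps_j$ normal perturbation changes the $\sigma$-perimeter by an amount which, against the linearization, is of order $\eps_j^{2}$. The competing Dirichlet error in the energy inequality \eqref{ei} is, by Lemmata~\ref{Dbe2.bis}--\ref{Dbe2}, at most $C|A|\,\|u_j^\pm\|_{L^\infty(B_1)}^2\lesssim\eps_j\,\|u_j^\pm\|_{L^\infty(B_1)}^2$ in the rescaled variables, so one must check $\|u_j^\pm\|_{L^\infty(B_1)}^2=o(\eps_j)$. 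This is where $\alpha<\sigma$ enters: in the flat regime $u^\pm$ vanishes a.e.\ below (resp.\ above) the slab, and since the odd reflection across the slab has no mode slower than $x_n$, a barrier argument shows that $u^\pm$ is Lipschitz near $\p E$ down to the flatness scale, with a universal constant; hence, writing $(u,E)$ for the un-rescaled counterexample pair and using \eqref{res}, $\|u_j^\pm\|_{L^\infty(B_1)}=r_j^{\sigma/2-1}\|u^\pm\|_{L^\infty(B_{r_j})}\lesssim r_j^{\sigma/2}\simeq\eps_j^{\sigma/(2\alpha)}$, so the Dirichlet error is $\lesssim\eps_j^{\,1+\sigma/\alpha}=o(\eps_j^{2})$ precisely because $\sigma/\alpha>1$. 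The estimate from the opposite competitor follows from the symmetry of $J$. Propagating this bound through the Harnack inequality and through the barrier computation that identifies the limit equation is the technical heart of the proof; the remaining steps are a transcription of the corresponding arguments in \cite{crs}.
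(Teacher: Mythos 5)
Your plan reproduces the paper's argument: both follow the compactness/improvement-of-flatness scheme of Theorem 6.8 in \cite{crs}, both first establish the linear bound $u^\pm\le Cr$ at scales above the flatness scale so that after rescaling $\|u_k\|_{L^\infty(B_1)}\lesssim a^{\sigma/(2\alpha)}$ with $a=2^{-k\alpha}$, and both observe that the Dirichlet contribution to the Euler--Lagrange/Harnack computation is then of order $a^{\sigma/\alpha-1}\to0$ precisely because $\alpha<\sigma$, so that the limiting profile solves $\triangle^{(\sigma+1)/2}w=0$ exactly as in the nonlocal minimal surface case. The only cosmetic difference is that you phrase the inductive step as a separate one-step improvement lemma (which, as stated, would need to carry the multi-scale flatness hypothesis to control the nonlocal tail, as you implicitly do when invoking the Harnack argument of \cite{crs}), whereas the paper states the dyadic induction directly; the content is the same.
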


The proof now follows closely Theorem 6.8 in \cite{crs}. We sketch it below.

Assume \eqref{k} holds for some large $k \ge k_0$. Then by comparison principle we find that
$$u^\pm \le C r, \quad \mbox{in $B_r$ for all $r\ge 2^{-k}$}. $$
for some $C$ depending on $n$ and $\alpha$.

Rescaling by a factor $2^{k}$ the pair $(u,E)$, the situation above can be described as follows: if for all $l$ with $ 0\le l \le k$
$$\|u\|_{L^\infty(B_{2^l})} \le 2^l 2^{-(\sigma k)/2},$$
$$\partial E \cap B_{2^l} \subset \{|x \cdot e_l| \leq 2^l2^{\alpha(l-k)}\},\quad |e_l|=1$$
then the inclusion holds also for $l=-1$, i.e.
\begin{equation}\label{51}
\partial E \cap B_{1/2} \subset \{|x \cdot e_{-1}| \leq 2^{-1} 2^{-\alpha(k+1)}\}.
\end{equation}
For some fixed $l$ we see that $\p E
\cap B_{2^l}$ has $C(l) 2^{-\alpha k}$ flatness, and $u$ is bounded by $C(l) 2^{-(\sigma k)/2}$ in $B_{2^l}$.

First we give a rough Harnack inequality that provides compactness for a sequence of blow-ups.

\begin{lemma}
Assume that for some large $k$, ($k>k_1$)
$$\partial E \cap B_1 \subset \{|x_n| \leq a:=2^{-k\alpha}\}, \quad \quad \|u\|_{L^\infty(B_1)} \le a^{\sigma/(2 \alpha)}$$
and
$$\partial E \cap B_{2^l} \subset \{|x \cdot e_l| \leq a 2^{l(1+\alpha)}\}, \quad l=0,1,\ldots, k.$$
Then either
$$\partial E \cap B_\delta \subset \left\{\frac{x_n}{a} \leq 1-\delta^2\right\} \quad \mbox{or} \quad \partial E \cap B_\delta \subset \left\{\frac{x_n}{a} \geq -1+\delta^2\right\},$$
for $\delta$ small, depending on $\sigma$, $n$, $\alpha$, ($\alpha < \sigma$).

\end{lemma}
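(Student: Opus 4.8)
The plan is to argue by contradiction and compactness, following the scheme of the Harnack-type lemma (Lemma 6.4) in \cite{crs}. Suppose the conclusion fails: then there is a sequence of minimizers $(u_j, E_j)$ in $B_1$, with corresponding parameters $k_j \to \infty$ and $a_j := 2^{-k_j \alpha} \to 0$, satisfying the three hypotheses (the $B_1$ flatness in the $e_n$-direction, the $L^\infty$ bound $\|u_j\|_{L^\infty(B_1)} \le a_j^{\sigma/(2\alpha)}$, and the dyadic flatness in the $e_l$-directions for $l = 0, \ldots, k_j$), but for which $\p E_j \cap B_\delta$ meets both $\{x_n/a_j > 1 - \delta^2\}$ and $\{x_n/a_j < -1 + \delta^2\}$. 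The first step is to rescale vertically: set $\tilde E_j := \{(x', x_n) : (x', a_j x_n) \in E_j\}$, so that $\p \tilde E_j \cap B_1 \subset \{|x_n| \le 1\}$, and the dyadic conditions become flatness statements for $\p \tilde E_j$ that, as in \cite{crs}, force the rescaled boundaries to have uniformly bounded oscillation on compact sets and hence (up to a subsequence) the graphs $x_n = w_j(x')$ describing $\p \tilde E_j$ near the origin to converge locally uniformly to a limit profile $w_\infty$.

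The heart of the matter is to identify the limiting equation. On the one hand, the nonlocal minimal surface machinery of \cite{crs} shows that if we perturb $E_j$ by a set $A$, the change in $\Per$ is controlled; on the other hand, by Section~2 (Lemmas \ref{Dbe2.bis}--\ref{Dbe2}) the change in the Dirichlet integral is bounded by $C|A|\,\|u_j^+\|_{L^\infty(B_1)}^2 \le C|A|\, a_j^{\sigma/\alpha}$. The crucial point is the choice of exponents: since $\alpha < \sigma$, we have $\sigma/\alpha > 1$, and after the vertical rescaling by $a_j$ the Dirichlet error term, measured at the correct scale against the nonlocal perimeter contribution (which after rescaling carries a factor $a_j^{1-\sigma}$, or the appropriate power dictated by the $|x-y|^{-n-\sigma}$ kernel), is lower order — it vanishes in the limit. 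Therefore $w_\infty$ satisfies the \emph{linearized} $\sigma$-minimal surface equation in the viscosity sense, exactly the limiting equation obtained in \cite{crs}: a fractional-Laplacian-type equation $(-\Delta)^{\sigma'} w_\infty = 0$ (in the appropriate half-Laplacian sense for graphs) on $B_{1/2}$, with $\|w_\infty\|_{L^\infty} \le 1$ and the full set of dyadic growth bounds inherited from the hypotheses. I would make this precise by testing the energy inequality \eqref{ei} with the competitor obtained by sliding $E_j$ vertically by a smooth compactly supported profile, exactly as in the derivation of \eqref{EL0}, and checking that the Dirichlet contribution is $o(a_j)$ relative to the perimeter contribution.

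Once $w_\infty$ solves the linearized equation with these bounds, the regularity theory for that equation (Lemma 6.6 / the interior Harnack and $C^{1,\alpha}$ estimate in \cite{crs}) gives an improvement of the oscillation: $\mathrm{osc}_{B_\delta} w_\infty \le 2(1-\delta^2)$ for $\delta$ small depending only on $n$, $\sigma$, $\alpha$ — concretely, $w_\infty$ either stays below $1 - \delta^2$ or above $-1 + \delta^2$ on $B_\delta$, because a bounded solution with the given dyadic growth cannot oscillate across the full strip at a dyadically smaller scale. Passing back to the sequence, the locally uniform convergence $w_j \to w_\infty$ forces the same dichotomy for $\p \tilde E_j \cap B_\delta$ for $j$ large, i.e. $\p E_j \cap B_\delta \subset \{x_n/a_j \le 1 - \delta^2\}$ or $\p E_j \cap B_\delta \subset \{x_n/a_j \ge -1 + \delta^2\}$, contradicting our standing assumption and completing the proof.

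The main obstacle I anticipate is the rigorous bookkeeping that the Dirichlet perturbation really is of lower order than the perimeter perturbation after the anisotropic ($a_j$ in the $x_n$-variable only) rescaling: one must track how both the $L(\cdot,\cdot)$ double integrals and the estimate of Lemma \ref{Dbe2.bis} transform, and verify that the surviving inequality in the limit is precisely the homogeneous linearized $\sigma$-minimal surface equation rather than one with a nontrivial right-hand side. The condition $\alpha < \sigma$ is exactly what is needed here, and getting the exponent accounting airtight — including justifying the viscosity passage to the limit for the obstacle-type competitors — is the delicate part; the compactness and the application of the known regularity for the limit equation are then routine adaptations of \cite{crs}.
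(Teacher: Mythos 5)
The paper's proof is a direct barrier argument: as in Lemma~6.9 of \cite{crs}, one slides a paraboloid until it touches $\p E$ at a point $y$ and then estimates the $\sigma$-curvature $\int (\chi_{E^c}-\chi_E)|x-y|^{-n-\sigma}\,dx$ at that contact point using the viscosity Euler--Lagrange inequality of Lemma~\ref{ELvs}. The only new ingredient relative to \cite{crs} is that this quantity, divided by $a$, is not $\leq 0$ but is bounded by $\tfrac1a C\|u\|_{L^\infty(B_1)}^2 \le Ca^{\sigma/\alpha-1}\to 0$ since $\alpha<\sigma$; the rest of the paraboloid argument then goes through unchanged. You correctly identify this exponent bookkeeping ($\alpha<\sigma$ forcing the Dirichlet contribution to be lower order) as the key quantitative point.

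However, your proposal does not follow this route: you instead argue by contradiction and compactness, rescaling $x_n\mapsto x_n/a_j$, extracting a locally uniform limit graph $w_\infty$ solving the linearized fractional equation, and invoking interior regularity for that equation. This is essentially a replica of the \emph{later} step of the argument (the ``Completion of the proof of Proposition~\ref{min_flat}''), not of the Harnack lemma itself, and as written it is circular. The sentence ``the dyadic conditions \dots force the rescaled boundaries to have uniformly bounded oscillation on compact sets and hence (up to a subsequence) the graphs $x_n=w_j(x')$ describing $\p\tilde E_j$ to converge locally uniformly'' is exactly the content that the present Harnack lemma is designed to supply: the hypotheses give only $L^\infty$ trapping of $\p\tilde E_j$ in a strip, not equicontinuity, and the anisotropically rescaled sets $\tilde E_j$ have unbounded fractional perimeter, so neither $BV$- nor $H^{\sigma/2}$-compactness applies. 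Without an a~priori one-step oscillation decay (which is what the paraboloid argument delivers), there is no way to pass to a continuous graph limit, so the compactness you rely on is precisely what needs to be proved. In the paper's logic, the Harnack lemma is the elementary, noncompactness ingredient that creates the compactness used downstream; you cannot interchange the two steps.
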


\begin{proof}
The proof is the same as Lemma 6.9 in \cite{crs}. The only difference is that at the contact point $y$ between the paraboloid $P$ and $\p E$ the quantity
\begin{equation}\label{q}
\frac 1 a  \, \int_{\R^n} \frac{\chi_{E^c}-\chi_E}{|x-y|^2} dx
\end{equation}
is not bounded above by $0$, instead by Lemma \ref{ELvs}, it is bounded by
$$ \frac 1a C \|u\|_{L^\infty(B_1)}^2 \le C a^{(\sigma/\alpha)-1} \to 0 \quad \mbox{as $a\to 0$},$$
and all the arguments apply as before.
\end{proof}

\noindent{\it Completion of the proof of Proposition \ref{min_flat}.}
As $k$ becomes much larger than $k_1$, we can apply Harnack inequality several times as in \cite{crs}. This gives compactness of the sets
$$\p E^*:=\left\{(x',\frac{x_n}{a}) {\mbox{ s.t. }} x \in \p E \right\},$$
as $a \to 0$. Precisely, we consider pairs $(u,E)$ that are minimal in $B_{2^k}$ with $0\in \p E$, for which
$$\partial E \cap B_1 \subset \{|x_n| \leq a:=2^{-k\alpha}\}, \quad \quad \|u\|_{L^\infty(B_1)} \le a^{\sigma/(2 \alpha)}.$$
and for all $0 \le l \le k$
$$\partial E \cap B_{2^l} \subset \{|x \cdot e_l| \leq a 2^{l(1+\alpha)}\},
 \quad \quad \|u\|_{L^\infty(B_{2^l})} \le 2^l a^{\sigma/(2 \alpha)}.$$
and we want to show that \eqref{51} holds.

If $(u_m,E_m)$ is a sequence of pairs as above with $a_m \rightarrow 0$
there exists a subsequence $m_k$ such that
$$\partial E_{m_k}^* \rightarrow (x',\omega(x'))$$
uniformly on compact sets, where $\omega:\R^{n-1} \to \R$ is H\"older
continuous and
$$\omega(0)=0, \quad |\omega| \leq C(1+ |x'|^{1+\alpha}).$$
Moreover, since the quantity in \eqref{q} tends to $0$, the proof of Lemma 6.11 of \cite{crs} works as before, thus
$$\triangle^\frac{\sigma+1}{2}w=0 \quad \mbox{in $\R^{n-1}$}.$$
This shows that $\omega$ is a linear function and therefore \eqref{51} holds for all large $m$.~\qed

\section{A monotonicity formula}\label{M:S:}

The goal of this
section is to establish
a Weiss-type monotonicity formula for minimizing pairs $(u,E)$,
that is different from the Alt-Caffarelli-Friedman monotonicity formula used
in Lemma \ref{oa}.
For this scope,
we first introduce the localized energy for the $\sigma$-perimeter by using the extension problem in one more dimension as in \cite{crs}.
With a measurable set $E\subset \R^n$  we associate a function $U(x,z)$ defined in $\R^{n+1}_+$ as
$$ U(\cdot,z):=(\chi_E-\chi_{E^c})*P(\cdot,z),\quad \mbox{with}\quad P(x,z):=\tilde c_{n,\sigma} \frac{z^\sigma}{(|x|^2+z^2)^{(n+\sigma)/2}},$$
where $\tilde c_{n,\sigma}$ is a normalizing constant depending on $n$ and $\sigma$.

For a bounded Lipschitz domain $\Omega\subset \R^{n+1}$ we denote by
$$\Omega_0:=\Omega \cap \{z=0\}\subset \R^n, \quad \quad \Omega_+
:= \Omega \cap \{z >0\},$$
and denote the extended variables as
$$X:=(x,z)\in \R^{n+1}_+, \quad \quad \mathcal B^+_r:=\{|X| <r\}.$$

The relation between the $\sigma$-perimeter and its extension is given by Lemma 7.2 in \cite{crs}. Precisely, let $E$ be a set with $\Per(E,B_r)<\infty$ and $U$ its extension, and let $F$ be a set which coincides with $E$ outside a compact set included in $B_r$. Then
$$\Per(F,B_r)-\Per(E,B_r)=c_{n,\sigma} \inf_{\Omega, V} \int_{\Omega^+} z^{1-\sigma}(|\nabla V|^2 -
|\nabla U|^2)dX.$$
Here the infimum is taken over all bounded Lipschitz sets with $\Omega_0 \subset B_r$ and all functions $V$ that agree with $U$ near $\p \Om$ and whose trace on $\{z=0\}$ is given by $\chi_F-\chi_{F^c}$. The constant $c_{n,\sigma}>0$ above is a normalizing constant. As a consequence we obtain the following characterization of minimizing pairs $(u,E)$ using the extension $U$ of $E$.

\begin{prop}\label{EXT:TH}
The pair $(u,E)$ is minimizing in $B_r$ if and only if
\begin{eqnarray*} && \int_{B_r}|\nabla u|^2\,dx+
c_{n,\sigma}\int_{\Omega^+} z^{1-\sigma} |\nabla U |^2\,dX\\ &&\qquad\le
\int_{B_r}|\nabla v|^2\,dx+
c_{n,\sigma}\int_{\Omega^+} z^{1-\sigma} |\nabla V|^2\,dX\end{eqnarray*}
for any bounded Lipschitz domain $\Omega$ with $\Omega_0\subset B_r$ and any functions $v$, $V$ that satisfy

1) $V=U$ in a neighborhood of $\p \Omega$,

2) the trace of $V$ on $\{z=0\}$ is $\chi_F-\chi_{F^c}$ for some set $F\subset \R^n$,

3) $v=u$ near $\p B_r$, and $v\ge 0$ a.e. in $F$, $v \le 0$ a.e. in $F^c$.
\end{prop}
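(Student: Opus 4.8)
The statement is essentially a bookkeeping consequence of two facts already available: the definition of the functional $J_\Om$ in \eqref{J}, and the trace/extension identity stated just above, which rewrites the increment $\Per(F,B_r)-\Per(E,B_r)$ as the infimum of a weighted Dirichlet-energy increment $c_{n,\sigma}\int_{\Om^+} z^{1-\sigma}(|\nabla V|^2-|\nabla U|^2)\,dX$ over admissible competitors $(\Om,V)$. I would set things up so that both directions of the equivalence follow from comparing $J_\Om$ at $(u,E)$ with $J_\Om$ at a competitor $(v,F)$, using the extension identity to convert the $\Per$-difference into the weighted Dirichlet integral.

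First I would prove the forward direction: assume $(u,E)$ is minimizing in $B_r$. Fix a competitor $(v,V,\Om)$ satisfying 1)--3). The trace of $V$ on $\{z=0\}$ is $\chi_F-\chi_{F^c}$, so $F\cap\Om^c=E\cap\Om^c$ outside a compact subset of $B_r$ (here is where I need to be a little careful: $V=U$ near $\p\Om$ forces the traces to agree there, hence $F=E$ outside a compact set inside $B_r$), and by 3) the pair $(v,F)$ is admissible for the minimization of $J_{B_r}$. Minimality gives $\int_{B_r}|\nabla u|^2 + \Per(E,B_r) \le \int_{B_r}|\nabla v|^2 + \Per(F,B_r)$. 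Now apply the extension identity: since $(\Om,V)$ is one particular admissible pair in the infimum defining $\Per(F,B_r)-\Per(E,B_r)$, we have $\Per(F,B_r)-\Per(E,B_r) \le c_{n,\sigma}\int_{\Om^+} z^{1-\sigma}(|\nabla V|^2-|\nabla U|^2)\,dX$. Substituting this bound into the minimality inequality and rearranging yields exactly the asserted inequality for $(v,V,\Om)$.

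For the converse, suppose the variational inequality of the Proposition holds for all admissible $(v,V,\Om)$, and let $(v,F)$ be any admissible competitor for $J_{B_r}$, i.e. $v-u\in H^1_0(B_r)$, $F\cap B_r^c=E\cap B_r^c$, and $v$ has the right sign on $F$, $F^c$. I want to show $J_{B_r}(u,E)\le J_{B_r}(v,F)$. Given $\eta>0$, use the extension identity again: there is an admissible pair $(\Om,V)$ in that infimum with $c_{n,\sigma}\int_{\Om^+} z^{1-\sigma}(|\nabla V|^2-|\nabla U|^2)\,dX \le \Per(F,B_r)-\Per(E,B_r)+\eta$. This $(v,V,\Om)$ is then admissible in the hypothesis of the Proposition, so $\int_{B_r}|\nabla u|^2 + c_{n,\sigma}\int_{\Om^+}z^{1-\sigma}|\nabla U|^2 \le \int_{B_r}|\nabla v|^2 + c_{n,\sigma}\int_{\Om^+}z^{1-\sigma}|\nabla V|^2$; rearranging and using the chosen bound gives $\int_{B_r}|\nabla u|^2 + \Per(E,B_r) \le \int_{B_r}|\nabla v|^2 + \Per(F,B_r) + \eta$. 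Letting $\eta\to 0$ finishes it.

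The only genuinely delicate point — and the step I would expect to have to argue most carefully — is matching up the admissibility classes on the two sides: verifying that the competitors $(\Om,V)$ appearing in the extension identity are precisely those allowed in the Proposition, in particular that condition 1) (equality of $V$ and $U$ near $\p\Om$, not merely on $\p\Om$) is exactly the hypothesis "$F$ coincides with $E$ outside a compact set included in $B_r$" in the extension lemma, and that the infimum there is attained or at least approached by such $(\Om,V)$. Everything else is just substitution and rearrangement of the two inequalities, together with the fact that the weighted Dirichlet integral $\int_{\Om^+} z^{1-\sigma}|\nabla U|^2$ is finite (so that the subtractions are legitimate), which follows from $\Per(E,B_r)<\infty$.
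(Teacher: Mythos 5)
Your proof is correct and supplies exactly the argument the paper leaves implicit: Proposition~\ref{EXT:TH} is stated in the paper without a written proof, as a direct consequence of the extension identity from Lemma~7.2 of~\cite{crs}, and your two-directional bookkeeping (one fixed $(\Omega,V)$ as an admissible competitor for the infimum in one direction; a near-optimal $(\Omega,V)$ in the other) is precisely the intended derivation. One point you should make explicit in the converse direction, distinct from the one you flagged: a general competitor $(v,F)$ for $J_{B_r}$ need only satisfy $F\Delta E\subset B_r$, whereas the extension identity requires $F\Delta E$ compactly contained in $B_r$; this is closed by the standard observation that minimality against compactly supported perturbations implies minimality in general (replace $F$ by a set agreeing with $E$ in a thin collar near $\partial B_r$ and interpolate $v$ to $u$ there, with the energy change controlled by the collar measure), or simply by noting that the paper only ever applies the proposition with compactly supported competitors.
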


Now we present a Weiss-type monotonicity formula for minimizing pairs $(u,E)$.

\begin{theorem}\label{MONO:F}
Let $(u,E)$ be a minimizing pair in $B_\rho$. Then
\begin{align*}
\Phi_u(r):=r^{\sigma-n}&\left(\int_{B_r}|\nabla u|^2\,dx+c_{n,\sigma}\int_{\mathcal B_r^+}
z^{1-\sigma} |\nabla U|^2\,dX\right)\\
&-\left(1-\frac \sigma 2\right)r^{\sigma-n-1}\int_{\p B_r} u^2 \,d{\mathcal{H}}^{n-1}
\end{align*}
is increasing in $r \in (0,\rho)$.

Moreover, $\Phi_u$ is constant if and only if $u$ is homogeneous
of degree $1-\frac\sigma2$
and $U$ is homogeneous of degree $0$.
\end{theorem}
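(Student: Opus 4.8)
The plan is to prove the Weiss-type monotonicity formula by the standard domain-variation / scaling argument, adapted to the mixed local–nonlocal functional. Write $u_r(x):=r^{\sigma/2-1}u(rx)$ and $U_r(X):=U(rX)$ (note $U_r$ is the extension associated to $E_r=r^{-1}E$, since the Poisson kernel $P$ is homogeneous of degree $-n$). By the scaling \eqref{scaling} and the characterization of minimizers in Proposition~\ref{EXT:TH}, the pair $(u_r,E_r)$ is minimizing in $B_1$, and a change of variables shows that
$$\Phi_u(r)=\int_{B_1}|\nabla u_r|^2\,dx+c_{n,\sigma}\int_{\mathcal B_1^+}z^{1-\sigma}|\nabla U_r|^2\,dX-\left(1-\frac\sigma2\right)\int_{\p B_1}u_r^2\,d{\mathcal H}^{n-1}=:\Phi_u(1)\big[u_r,U_r\big].$$
So it suffices to show $\frac{d}{dr}\Phi_u(r)\ge 0$. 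First I would compute this derivative in $r$, differentiating under the integral sign and using $\p_r u_r(x)=r^{\sigma/2-2}\big((\sigma/2-1)u(rx)+rx\cdot\nabla u(rx)\big)$, i.e. (after rescaling back to $B_1$) $\p_r u_r = r^{-1}\big(x\cdot\nabla u_r-(1-\sigma/2)u_r\big)$, and similarly $\p_r U_r = r^{-1}\,X\cdot\nabla U_r$ (degree-zero homogeneity target). Integrating by parts on $B_1$ and on $\mathcal B_1^+$, using that $u$ is harmonic in $\{u>0\}$ and $\{u<0\}$ and that $z^{1-\sigma}U$ solves the degenerate equation $\mathrm{div}(z^{1-\sigma}\nabla U)=0$ in $\{z>0\}$, the bulk terms collapse into boundary integrals on $\p B_1$ and on the spherical cap $\p\mathcal B_1^+\cap\{z>0\}$, plus the interface term on $B_1\times\{0\}$ which matches the free-boundary Euler--Lagrange structure. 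The upshot should be an identity of the form
$$\frac{d}{dr}\Phi_u(r)=\frac{2}{r}\left(\int_{\p B_1}\Big|\p_\nu u_r-(1-\tfrac\sigma2)u_r\Big|^2 d{\mathcal H}^{n-1}+c_{n,\sigma}\int_{\p\mathcal B_1^+\cap\{z>0\}}z^{1-\sigma}|\p_\nu U_r|^2\,d{\mathcal H}^{n}\right)+\text{(non-negative defect)},$$
where the ``defect'' records the comparison between $(u_r,U_r)$ and the competitor obtained by freezing the boundary data and taking the exact homogeneous extension of degree $1-\sigma/2$ inside. This last non-negativity is exactly where minimality enters: the minimizing property in $B_1$ (via Proposition~\ref{EXT:TH}, choosing $\Omega$ a half-ball and $v$, $V$ the homogeneous extensions of the traces of $u_r$, $U_r$ on $\p B_1$ resp. $\p\mathcal B_1^+$) forces the actual energy to be below that of the cone competitor, and this inequality is precisely what makes the derivative non-negative.

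For the rigidity statement: if $\Phi_u$ is constant on an interval then $\frac{d}{dr}\Phi_u\equiv 0$, so every non-negative term in the identity above must vanish. In particular $\p_\nu u_r=(1-\sigma/2)u_r$ on $\p B_1$ for all $r$ in the interval, which by the Euler relation means $x\cdot\nabla u = (1-\sigma/2)u$ on the corresponding spheres, i.e. $u$ is positively homogeneous of degree $1-\sigma/2$; and $\p_\nu U_r\equiv 0$ on the spherical cap together with the degenerate-harmonicity of $U$ forces $U$ to be homogeneous of degree $0$ (equivalently $X\cdot\nabla U\equiv 0$ in $\mathcal B_\rho^+$), hence $E$ itself is a cone. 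Conversely, if $u$ is $(1-\sigma/2)$-homogeneous and $U$ is $0$-homogeneous, a direct rescaling check shows $\Phi_u(r)$ does not depend on $r$: the $r^{\sigma-n}$ prefactor exactly cancels the scaling of the Dirichlet and weighted Dirichlet integrals, and the $r^{\sigma-n-1}$ prefactor cancels the scaling of $\int_{\p B_r}u^2$, leaving a constant.

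The main obstacle I expect is the careful integration by parts across $\{z=0\}$: one must justify the boundary terms on the interface $B_1\times\{0\}$, which involve the weighted normal derivative $\lim_{z\to 0}z^{1-\sigma}\p_z U$ — this is the fractional Laplacian of $\chi_E-\chi_{E^c}$ up to a constant, and it is only a distribution in general, not a pointwise quantity. The clean way around this is to avoid pointwise traces entirely and argue via the minimality inequality of Proposition~\ref{EXT:TH} directly with cone competitors, so that the ``defect'' term is manifestly non-negative without ever needing to identify it; one then only needs the smooth integration by parts in the open regions $\{z>0\}$ and $\{u\ne 0\}$, where $u$ and $U$ are genuinely harmonic (resp.\ degenerate-harmonic) and smooth. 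A secondary technical point is the a priori regularity needed to differentiate $\Phi_u$ in $r$ and to make the boundary integrals on $\p B_r$ meaningful; this is supplied by the $C^\alpha$ bound of Theorem~\ref{H} and interior elliptic estimates for $u$ and $U$, so $\Phi_u$ is absolutely continuous and the computation is valid for a.e.\ $r$, which suffices for monotonicity.
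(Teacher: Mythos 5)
Your proposal is correct and follows essentially the same strategy as the paper: reduce to $r=1$ by the scaling $\Phi_u(rs)=\Phi_{u_r}(s)$, compare $(u,U)$ with the competitor obtained by taking the homogeneous extension of degree $1-\sigma/2$ (resp.\ degree $0$) of the trace on $\partial B_1$ (resp.\ $\partial\mathcal B_1^+$) via Proposition~\ref{EXT:TH}, and thereby derive $\Phi_u'(1)\ge\int_{\partial B_1}\big(u_\nu-(1-\tfrac\sigma2)u\big)^2+c_{n,\sigma}\int_{\partial\mathcal B_1^+}z^{1-\sigma}|U_\nu|^2$, from which both monotonicity and the homogeneity rigidity follow. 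Your observation that one should avoid pointwise traces of $z^{1-\sigma}\partial_z U$ at $\{z=0\}$ and instead invoke the minimality inequality directly against the cone competitor is precisely how the paper carries out the argument (the paper realizes this via an $\epsilon$-perturbation comparing $G(1)$ with $G_{u^\epsilon}(1)$ and letting $\epsilon\to0$); the extra factor $2/r$ in your heuristic identity is a harmless slip.
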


\begin{proof} The proof is a suitable modification
of the one of Theorem 8.1 in \cite{crs}.
We notice that $\Phi_u$ possesses the natural scaling
\begin{equation*}
\Phi_u(rs)=\Phi_{u_r}(s),
\end{equation*}
where $(u_r,E_r)$ is the rescaling given in \eqref{res}.

We prove that
$$ \frac{d}{dr}\Phi(u,U,r)\ge0 {\mbox{ for a.e. $r$}}.$$
By scaling it suffices to consider the case when $r=1$ and $r$ is a ``regular" radius for $|\nabla u|^2 dx$, $z^{1-\sigma}|\nabla U|^2dxdz$ and $E$.
We use the short notation $\Phi(r)$ for $\Phi_u(r)$ and write
$$\Phi(r)=G(r)-H(r),$$
with
\begin{align*}
G(r)&:=
r^{\sigma-n}\left(\int_{B_r}|\nabla u|^2\,dx+c_{n,\sigma}\int_{B_r^+}
z^{1-\sigma} |\nabla U|^2\,dX\right)\\
H(r)&:=\left(1-\frac \sigma 2 \right) r^{\sigma-n-1}\int_{\p B_r} u^2 \,d{\mathcal{H}}^{n-1}.
\end{align*}
Below we use the minimality to obtain a bound for $G'(1)$. We denote as usual $u_\nu$ and $u_\tau$ for the
normal and tangential gradient of $u$ on $\p B_r$. Let $\epsilon >0$ be small. We compute $G(1)$
by writing the integrals in $ B_{1-\eps}$ and $ B_1\setminus B_{1-\eps}$:
\begin{align*}\label{e.1}
 G_u(1) =&
\int_{B_{1-\eps}}|\nabla u|^2\,dx+\eps\int_{\partial B_1}
|\nabla u|^2 \,d{\mathcal{H}}^{n-1}\\
& +c_{n,s} \left( \int_{\mathcal B^+_{1-\eps}}
z^{1-\sigma}|\nabla U|^2\,dx\,dz + \eps\int_{\partial \mathcal B_1^+}z^{1-\sigma}|\nabla U|^2\,d{\mathcal{H}}^n \right) +o(\eps) \\
=&
(1-\eps)^{n-\sigma}
G(1-\eps)+\eps\int_{\partial B_1}
|u_\tau|^2+|u_\nu|^2\,d{\mathcal{H}}^{n-1}\\
&\quad \quad \quad +
\eps \, \, c_{n,\sigma }\int_{\partial B_1^+}z^{1-\sigma}(|U_\tau|^2+|U_\nu|^2)\,d{\mathcal{H}}^n
+o(\eps).
\end{align*}
We now consider a competitor $(u^\eps,U^\eps)$ for $(u,U)$ defined as
\begin{eqnarray*}
&&u^\eps(x):=\left\{
\begin{matrix}
(1-\eps)^{1-\frac \sigma 2} \, \, u(\frac {x}{1-\eps}) & {\mbox{ if $x\in B_{1-\eps}$,}}\\
\\
|x|^{1-\frac\sigma2)} \, \, u(\frac{x}{|x|})
& {\mbox{ if $x\in B_1\setminus B_{1-\eps}$,}}\\
\\
u(x)& {\mbox{ if $x\in B_1^c$,}}\\
\end{matrix}
\right.
\end{eqnarray*}
and
\begin{eqnarray*}
U^\eps(X):=\left\{
\begin{matrix}
U(\frac{X}{1-\eps}) & {\mbox{ if $x\in \mathcal B_{1-\eps}^+$,}}\\
\\
U(\frac{X}{|X|})
& \quad {\mbox{ if $x\in \mathcal B_1^+\setminus \mathcal B_{1-\eps}^+$,}}\\
\\
U(X)& {\mbox{ if $|X| \ge 1.$}}\\
\end{matrix}
\right.
\end{eqnarray*}
From Proposition \ref{EXT:TH} we obtain
$$G_u(1) \le G_{u^\eps}(1).$$
We compute $G_{u^\eps}(1)$ noticing that $u^\eps$ in $B_{1-\eps}$ coincides with the rescaling $u_{1/(1-\eps)}$ hence
\begin{align*}
G_{u^\eps}(1)&=(1-\eps)^{n-\sigma}G_{u_{1-\eps}}(1-\eps) + \eps \, \,  c_{n,\sigma}\int_{\p \mathcal B_1^+} |U_\tau|^2 d \mathcal H^n\\
& \quad + \eps \int_{\p B_1} \left(|u_\tau|^2 + \Big(1-\frac{\sigma}{2}\Big)^2 u^2\right) \,  d \mathcal H^{n-1} +o(\eps).
\end{align*}
By scaling, the first term in the sum above equals $(1-\eps)^{n-\sigma} G_u(1)$. Plugging $G_u(1)$ and $G_{u^\eps}(1)$ in the inequality above gives
 \begin{align*}
 G_u(1) \ge G_u(1-\eps)&+
\eps \int_{\partial B_1} |u_\nu|^2-\left(1-\frac\sigma2\right)^2 u^2\,d{\mathcal{H}}^{n-1}\\
&+
\eps \, \, c_{n,\sigma}\int_{\partial \mathcal B_1^+}z^{1-\sigma}|U_\nu|^2\,d{\mathcal{H}}^n
+o(\eps),
 \end{align*}
 hence
\begin{equation*}\label{e.2}
G'(1)\ge\int_{\partial B_1} |u_\nu|^2-\left(1-\frac\sigma 2\right)^2 u^2\,d{\mathcal{H}}^{n-1}
+c_{n,\sigma}\int_{\partial \mathcal B_1^+}z^{1-\sigma}|U_\nu|^2\,d{\mathcal{H}}^n
.\end{equation*}
On the other hand,
\begin{equation*}\label{e.3}
H'(1)=\left(1- \frac\sigma 2 \right) \int_{\partial B_1} 2 u u_\nu + (\sigma-2) u^2\,d{\mathcal{H}}^{n-1}.
\end{equation*}
and we conclude that
$$\Phi'(1)\ge
\int_{\partial B_1} \left(u_\nu-\Big(1-\frac\sigma2\Big) u\right)^2\,d{\mathcal{H}}^{n-1}
+c_{n,\sigma}\int_{\partial \mathcal B_1^+}z^{1-\sigma}|U_\nu|^2\,d{\mathcal{H}}^n,$$
and the conclusion follows.
\end{proof}

The monotonicity formula allows us to characterize the blow-up limit of a sequence of rescalings $(u_r,E_r)$. First we need to show that minimizing pairs remain closed under limits.

\begin{prop}{\label{cl_limit}} Assume $(u_m,E_m)$ are minimizing pairs in $B_2$ and $$u_m \to u \quad \mbox{in $L^2(B_2)$}, \quad \mbox{and} \quad
E_m \to E \quad \mbox{in $L^1_{\rm loc}(\R^n)$}.$$
Then $(u,E)$ is a minimizing pair in $B_1$ and $u_m \to u$ in $H^1(B_1)$ and
$$\Per(E_m,B_1) \to \Per(E,B_1).$$
\end{prop}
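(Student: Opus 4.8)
The plan is the classical ``closure of minimizers'' scheme: uniform estimates and compactness, then lower semicontinuity, then a competitor--pasting argument that upgrades lower semicontinuity to minimality and, as a byproduct, to strong convergence. Since $(u_m,E_m)$ is minimizing in $B_2$ it is minimizing in every subdomain; using that $\|u_m\|_{L^2(B_2)}$ is bounded, Theorem~\ref{H} (applied at a point of $\partial E_m$, or trivially if $\partial E_m\cap B_{3/2}=\emptyset$) gives uniform $C^\alpha(\overline{B_{3/2}})$, hence $L^\infty$, bounds for $u_m$; a Caccioppoli estimate for the subharmonic functions $u_m^\pm$ then gives a uniform bound for $\|u_m\|_{H^1(B_1)}$, and comparison of $(u_m,E_m)$ with a fixed competitor of finite fractional perimeter (the Dirichlet change being controlled by $|A|\,\|u_m\|_{L^\infty}^2$ as in Section~2) gives a uniform bound for $\Per(E_m,B_{3/2})$. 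Passing to a subsequence, $u_m\rightharpoonup u$ in $H^1(B_1)$, $u_m\to u$ a.e., and $\chi_{E_m}\to\chi_E$ a.e.; by dominated convergence $(u,E)$ is admissible, and by weak lower semicontinuity of the Dirichlet integral and Fatou's lemma for $\Per$ (as in Lemma~\ref{l1}),
$$\int_{B_1}|\nabla u|^2+\Per(E,B_1)\le\liminf_{m}\Big(\int_{B_1}|\nabla u_m|^2+\Per(E_m,B_1)\Big).$$

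\emph{Minimality and strong convergence.} Let $(v,F)$ be any competitor for $(u,E)$ in $B_1$, with $v=u$ and $F=E$ outside $B_{1-\delta_0}$. For $\eps<\delta_0$ put $F_m:=(F\cap B_{1-\eps})\cup(E_m\setminus B_{1-\eps})$: this is an admissible perturbation of $E_m$ in $B_1$ that equals $F$ on $B_{1-\eps}$ for \emph{every} $m$ and equals $E_m$ outside $B_{1-\eps}$. Let $v_m$ agree with $v$ on $B_{1-\eps}$, with $u_m$ on $B_1\setminus B_{1-\eps/2}$, and be the interpolation $\psi u_m+(1-\psi)v$ on $\mathcal A:=B_{1-\eps/2}\setminus B_{1-\eps}$ for a cutoff $\psi$. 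Inserting $(v_m,F_m)$ in the minimality inequality for $(u_m,E_m)$, the term $\int_{B_1\setminus B_{1-\eps/2}}|\nabla u_m|^2$ cancels, the interpolation error on $\mathcal A$ is (using $v=u$ there, convexity of $|\cdot|^2$, the uniform $H^1$ bound and $u_m\to u$ in $L^2$) at most $\eta\int_{\mathcal A}|\nabla u_m|^2+(1+\eta)\int_{\mathcal A}|\nabla u|^2+o_m(1)$, and one is left with
$$\int_{B_{1-\eps}}|\nabla u_m|^2+\Per(E_m,B_1)\le\int_{B_{1-\eps}}|\nabla v|^2+\Per(F_m,B_1)+C\eta+(1+\eta)\int_{\mathcal A}|\nabla u|^2+o_m(1).$$
Letting $m\to\infty$ with the lower semicontinuity on the left, then $\eta\to0$, then $\eps\to0$ (using $\int_{\mathcal A}|\nabla u|^2\to0$ and $\int_{B_{1-\eps}}|\nabla v|^2\to\int_{B_1}|\nabla v|^2$) yields the minimality of $(u,E)$ in $B_1$ --- provided $\limsup_m\Per(F_m,B_1)\le\Per(F,B_1)+\omega(\eps)$ with $\omega(\eps)\to0$. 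Taking finally $(v,F)=(u,E)$ in this argument gives the matching upper bound $\limsup_m\big(\int_{B_1}|\nabla u_m|^2+\Per(E_m,B_1)\big)\le\int_{B_1}|\nabla u|^2+\Per(E,B_1)$; together with lower semicontinuity this forces $\int_{B_1}|\nabla u_m|^2\to\int_{B_1}|\nabla u|^2$ and $\Per(E_m,B_1)\to\Per(E,B_1)$ separately, and since $u_m\rightharpoonup u$ in $H^1(B_1)$ with convergent norms, $u_m\to u$ strongly in $H^1(B_1)$ (and, by the usual subsequence argument, along the full sequence).

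\emph{The main obstacle} is precisely the estimate $\limsup_m\Per(F_m,B_1)\le\Per(F,B_1)+\omega(\eps)$, since the fractional perimeter is only lower semicontinuous, not continuous, under $L^1_{\rm loc}$ convergence of sets. The key is that $F_m$ coincides with $F$ on the \emph{fixed} ball $B_{1-\eps}$, so that $\Per(F_m,B_1)-\Per(F,B_1)$ only involves interactions with the region where $E_m\neq E$, which lies outside $B_{1-\eps}$: its intersection with any fixed ball has measure tending to $0$; its interaction with $B_{1-2\eps}$ costs $O(\eps^{-\sigma}\,|E_m\triangle E|)\to0$ (as in the proof of Lemma~\ref{d88d99.99}); its far--field interaction with $B_1$ vanishes by dominated convergence against the integrable tail $|y|^{-n-\sigma}$; and the interaction across the thin collar around $\partial B_{1-\eps}$ is $O(\eps^{1-\sigma})$ by a scaling estimate --- these are exactly the bounds used for nonlocal minimal surfaces in \cite{crs}. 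A minor technical point: on $\mathcal A$ the interpolant $v_m$ need not be sign--compatible with $F_m=E_m|_{\mathcal A}$ on the set $(E\triangle E_m)\cap\mathcal A$, whose measure tends to $0$; this is remedied by a routine modification of $v_m$ there. Alternatively, one may run the whole argument in the extension picture of Proposition~\ref{EXT:TH}: there the energy is local, the convergence $U_m\to U$ of the extensions is local (uniform on $\{z>0\}$ and weak in the weighted $H^1$), and the pasting of competitors near $\partial\Omega$ is the standard one, which makes the nonlocal bookkeeping above unnecessary.
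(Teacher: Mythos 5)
Your proof follows the same general strategy as the paper's: uniform estimates and compactness, lower semicontinuity, a pasting argument to show minimality, and then taking the competitor to be $(u,E)$ itself to upgrade to full convergence of the energies (and hence strong $H^1$ convergence and convergence of the perimeters). The overall logic is correct. Two places where your execution differs from the paper's, and where the paper's choices are genuinely cleaner rather than cosmetic, are worth noting. First, the sign--compatibility of the interpolant: you correctly observe that $v_m=\psi u_m+(1-\psi)u$ need not satisfy $v_m\ge0$ on $F_m\cap\mathcal A$ and $v_m\le0$ on $F_m^c\cap\mathcal A$ on the set $(E_m\triangle E)\cap\mathcal A$, but dismiss this as ``a routine modification''. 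It isn't quite routine: naively truncating destroys $H^1$ regularity across $\partial E_m$. The paper's remedy is to interpolate the positive parts through $w_m^+:=\min\{u_m^+,u^+\}$ (and symmetrically for the negative parts); this function vanishes on $E_m^c\cup E^c$, which automatically resolves the sign constraint in every region of the construction, and you should make this explicit. Second, the fractional--perimeter bookkeeping: you set $F_m=F$ only on $B_{1-\eps}$ and $F_m=E_m$ on $B_1\setminus B_{1-\eps}$, so that $F_m\triangle F=(E_m\triangle E)\setminus B_{1-\eps}$ can accumulate in the thin collar $B_1\setminus B_{1-\eps}$ where your competitor is being evaluated; controlling the interaction across that collar is delicate and the $O(\eps^{1-\sigma})$ you invoke is not obviously justified. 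The paper avoids this by setting $F_m=F$ on all of $B_1$ and $F_m=E_m$ outside $B_1$ (so the competitor is compared in $B_2$): then $F_m\triangle F=(E_m\triangle E)\setminus B_1$ and the error is exactly $b_m:=L\big(B_1,(E_m\triangle E)\setminus B_1\big)\le L(B_1,B_1^c)<\infty$, which tends to $0$ by dominated convergence since $E_m\to E$ in $L^1_{\rm loc}$. Finally, a stylistic remark: the paper first proves $u_m\to u$ in $H^1(B_1)$ directly, by using that $u_m,u$ are harmonic in their positive/negative sets so that $\triangle u_m^2=2|\nabla u_m|^2$ and $u_m^2\to u^2$ in $L^1$; you instead obtain strong $H^1$ convergence as a byproduct of the energy convergence at the end, which is also correct but loses a short, self-contained argument that the paper uses.
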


\begin{proof} First we show that $u_m \to u$ in $H^1(B_1)$. Since $\nabla u_m \rightharpoonup \nabla u$ weakly in $L^2$ it suffices to show that $$\int_{B_1}|\nabla u_m|^2 dx \to \int_{B_1}|\nabla u|^2 dx.$$
Indeed, since $u_m$ and $u$ are continuous functions which are harmonic in their positive and negative sets we have
$$\triangle u^2=2|\nabla u|^2, \quad \triangle u_m^2=2|\nabla u_m|^2,$$
and the limit above follows since $u_m^2 \to u^2$ in $L^1$.

Let $(v,F)$ be a compact perturbation for $(u,E)$ in $B_1$. Precisely, assume $F=E$ and $v=u$ outside a compact set of $B_1$, and $v \ge 0$ a.e. in $F$, $v \le 0$ a.e. in $F^c$. Let
$$w_m^+=\min\{u_m^+,u^+\}$$ and define $v_m^+$ such that $v_m^+=v^+$ in $B_{1-2\eps}$, $v_m^+=w_m^+$ in the annulus $B_{1+\eps} \setminus B_{1-\eps}$ and $v_m^+=u_m^+$ outside $B_{1+2\eps}$. In $B_1 \setminus B_{1-2\eps}$ we define $v_m^+$ as an interpolation between $v^+$ and $w_m^+$ i.e.
$$v_m^+=\eta v^+ + (1-\eta) w_m^+,$$
with $\eta$ a cutoff function with $\eta=1$ in $B_{1-2\eps}$ and $\eta=0$ outside $B_{1-\eps}$.
Similarly, in $B_{1+2\eps} \setminus B_1$ we let $v_m^+$ to be an interpolation between $u_m^+$ and $w_m^+$.

We define $v_m^-$ similarly. We have
$$v_m \ge 0 \quad \mbox{a.e. in $F_m$}, \quad  v_m \le 0 \quad \mbox{a.e. in $F_m^c$, with}$$
$$F_m:= (F \cap B_1) \cup (E_n \setminus B_1),$$
thus $(v_m,F_m)$ is a compact perturbation of $(u_m,E_m)$. {F}rom the minimality of $(u_m,E_m)$ (see \eqref{J}) we find
$$J_{B_2}(u_m) \le J_{B_2}(v_m).$$
By construction,
$$\int_{B_2}|\nabla v_m|^2-|\nabla u_m|^2 dx \le \int_{B_1}|\nabla v|^2 -|\nabla u_m|^2 dx + c_m(\eps),$$
with $$c_m(\eps):= C \eps^{-2} \int_{B_2}(u_m-u)^2 dx + C \int_{B_{1+2\eps}-B_{1-2\eps}}|\nabla u|^2+|\nabla u_m|^2dx.$$
Notice also that
$$\Per(F_m,B_2) - \Per (E_m,B_2) \le \Per(F,B_1)-\Per(E_m,B_1) + b_m,$$ with $$b_m:=L(B_1, (E_m \Delta E) \setminus B_1).$$
Since $E_m \to E$ in $L^1_{\rm loc}(\R^n)$ it follows easily that $b_m \to 0$ (see Theorem 3.3 in \cite{crs}). Using the last two inequalities in the energy inequality and letting first $m \to \infty$ and then $\eps \to 0$ we find
$$\limsup J_{B_1}(u_m) \le J_{B_1}(v).$$
On the other hand from the lower semicontinuity of $J$ we have
$$\liminf J_{B_1}(u_m) \ge J_{B_1}(u).$$
This shows that $(u,E)$ is a minimizing pair and that $J_{B_1}(u_m) \to J_{B_1}(u)$ and our conclusion follows.~\end{proof}

Next we consider the limit of a sequence of rescalings $u_r$, $E_r$, $U_r$ as $r \to 0$,
$$u_r(x)=r^{\frac \sigma 2 -1}u(rx), \quad E_r=r^{-1}E, \quad U_r(X)=U(rX).$$

\begin{prop}[Tangent cone]\label{tc}
Assume $(u,E)$ is a minimizing pair in $B_1$, and $0 \in \p E$. There exists a sequence of $r=r_k \to 0$ such that
$$u_r \to \bar u \quad \mbox{in $L^2_{\rm loc}(\R^n)$}, \quad E_r \to \bar E \quad \mbox{in $L^1_{\rm loc}(\R^n)$}, \quad U_r \to \bar U \quad \mbox{in $L^2_{\rm loc}(\R^n, \ z^{1-\sigma}dX)$}$$
with $\bar u$ homogeneous of degree $1-\frac \sigma 2$, $\bar U$ homogeneous of degree $0$ and $(\bar u, \bar E)$ a minimizing pair in $\R^n$.
\end{prop}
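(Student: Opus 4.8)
The plan is to combine three ingredients already available: the scaling invariance of the problem, the closedness of minimizing pairs under $L^2_{\rm loc}\times L^1_{\rm loc}$ limits (Proposition~\ref{cl_limit}), and the Weiss-type monotonicity formula (Theorem~\ref{MONO:F}). First I would establish uniform compactness of the rescaled family $\{(u_r,E_r,U_r)\}_{r\in(0,1)}$. By Theorem~\ref{H} the pair $(u,E)$ satisfies $\|u\|_{C^\alpha(B_{r_0})}\le C$ and the density estimates \eqref{dense}, both of which are scale-invariant under the rescaling \eqref{res}; hence the $u_r$ are uniformly bounded in $C^\alpha$ on every fixed ball (for $r$ small), the $\chi_{E_r}$ have uniformly bounded $BV_{\rm loc}$-type compactness coming from the uniform bound on $\Per(E_r,B_R)$ (which follows from minimality by comparing with the competitor that fills or empties $E_r$ in $B_R$), and the extensions $U_r$ inherit uniform local bounds in the weighted space $L^2_{\rm loc}(z^{1-\sigma}\,dX)$ together with uniform Dirichlet energy $\int_{\mathcal B_R^+}z^{1-\sigma}|\nabla U_r|^2\,dX$ bounds, again by minimality. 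By Ascoli--Arzel\`a, the $BV$ compactness theorem, and weak compactness in the weighted Sobolev space, we can extract a sequence $r_k\to 0$ along which $u_{r_k}\to\bar u$ in $L^2_{\rm loc}(\R^n)$ (indeed locally uniformly), $E_{r_k}\to\bar E$ in $L^1_{\rm loc}(\R^n)$, and $U_{r_k}\to\bar U$ in $L^2_{\rm loc}(\R^n,z^{1-\sigma}dX)$; one also checks $\bar U$ is the extension of $\bar E$ in the sense used in Section~\ref{M:S:}, which is a routine consequence of the convergence of $\chi_{E_{r_k}}$ and the explicit Poisson-type kernel $P$.

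Next, applying Proposition~\ref{cl_limit} (rescaled to an arbitrary ball $B_R$, which is permissible since a minimizer in $B_1$ rescales to a minimizer in $B_{1/r_k}\supset B_{2R}$ for $r_k$ small) gives that $(\bar u,\bar E)$ is a minimizing pair in $B_R$ for every $R$, hence in all of $\R^n$, and moreover that $u_{r_k}\to\bar u$ strongly in $H^1_{\rm loc}$ with $\Per(E_{r_k},B_R)\to\Per(\bar E,B_R)$. The strong convergence of the Dirichlet energies, together with the convergence of the perimeter and of the weighted Dirichlet energy of the extensions (the latter following from Proposition~\ref{cl_limit}'s argument applied to the extension formulation in Proposition~\ref{EXT:TH}), is exactly what is needed to pass the monotonicity functional $\Phi$ to the limit: one gets $\Phi_{\bar u}(s)=\lim_k \Phi_{u_{r_k}}(s)=\lim_k \Phi_u(r_k s)$ for each fixed $s>0$.

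The final step is the rigidity argument. By Theorem~\ref{MONO:F}, $\Phi_u$ is monotone increasing on $(0,\rho)$ and bounded below near $0$ (it is, e.g., bounded below by $-C$ using the $C^\alpha$ bound to control the $u^2$ term and nonnegativity of the energy terms), so the limit $\Phi_u(0^+):=\lim_{r\to 0^+}\Phi_u(r)$ exists and is finite. Then for every fixed $s>0$ we have $\Phi_{\bar u}(s)=\lim_k\Phi_u(r_k s)=\Phi_u(0^+)$, i.e.\ $\Phi_{\bar u}$ is constant on $(0,\infty)$. By the equality case of Theorem~\ref{MONO:F}, a minimizing pair whose $\Phi$ is constant must have $\bar u$ homogeneous of degree $1-\frac\sigma2$ and $\bar U$ homogeneous of degree $0$; since $\bar U$ is the extension of $\bar E$ and is $0$-homogeneous, its trace $\chi_{\bar E}-\chi_{\bar E^c}$ is $0$-homogeneous as well, so $\bar E$ is a cone. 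This completes the proof.

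The main obstacle I expect is the bookkeeping needed to justify that the rescaled extensions $U_{r_k}$ converge appropriately and that $\bar U$ is genuinely the extension of the limit set $\bar E$, so that the equality-case statement of Theorem~\ref{MONO:F} applies to $(\bar u,\bar E)$ with its honest extension $\bar U$ rather than to an abstract limit function; this requires combining the $L^1_{\rm loc}$ convergence of $\chi_{E_{r_k}}$ with the decay of the kernel $P(x,z)$ to get pointwise (hence $L^2_{\rm loc}(z^{1-\sigma})$) convergence $U_{r_k}\to\bar U$, and then invoking the convergence of weighted Dirichlet energies from Proposition~\ref{cl_limit} (applied via Proposition~\ref{EXT:TH}) to pass $\Phi$ to the limit. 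The compactness itself and the rigidity are then comparatively soft once the monotonicity formula is in hand.
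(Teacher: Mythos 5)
Your proposal is correct and follows essentially the same route as the paper: compactness of the rescalings, Proposition~\ref{cl_limit} to show the limit is a minimizing pair and to pass the perimeter and Dirichlet energies (hence $\Phi$) to the limit, boundedness of $\Phi_u(0^+)$ via the $C^\alpha$ estimate of Theorem~\ref{H}, and the rigidity statement of Theorem~\ref{MONO:F} to conclude homogeneity. The paper is simply terser (citing Proposition~9.1 of \cite{crs} for the convergence of the extensions $U_r$), whereas you spell out the same ingredients more explicitly.
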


We refer to a minimizing homogeneous pair $(\bar u, \bar E)$ as a {\it minimizing cone}. {F}rom Theorem \ref{H} we see that on compact sets $u_r \to u$ uniformly and $E_r \to \bar E$ in Hausdorff distance.

\begin{proof}
By compactness we can find a sequence such
that $u_r \to \bar u$ and $E_r \to \bar E$ as above. {F}rom Proposition \ref{cl_limit} we have $\Per(E_r) \to \Per(\bar E)$
and, as in Proposition 9.1 in \cite{crs}, this implies the convergence above of $U_r$ to $U$, and
$$\Phi_{u_r}(t) \to \Phi_{\bar u}(t) \quad \mbox{as $r \to 0$}.$$
Then $\Phi_{\bar u}(t)=\Phi_u(0+)$ and the conclusion follows from Theorem \ref{MONO:F}. 
Notice from the definition of $\Phi$ that $\Phi(0+)$ is bounded
since $u \in C^\alpha(B_1)$, with $\alpha=1-\frac\sigma2$, thanks to
Theorem \ref{H}.~\end{proof}

Let $(\bar u, \bar E)$ be a minimizing cone. We define its energy as $\Phi_{\bar u}$ which is a constant
(recall Theorem \ref{MONO:F}). 
{F}rom the homogeneity of $\bar u$ it follows that $$\Phi_{\bar u}=c_{n,\sigma}\int_{B_1} |\nabla \bar U|^2 dX,$$
hence the energy depends only on $\bar E$. 

Since $\bar u^\pm$ are complementary homogeneous harmonic functions in $\bar E$ respectively $\bar E^c$, at least one of them, say $\bar u^-$, has homogeneity greater or equal to 1, thus $\bar u^-=0$. Then $\bar u^+$ is homogeneous of degree $1-\frac\sigma2$ and
$$\int_{\R^n} \frac{\chi_{\bar E^c}-\chi_{\bar E}}{|y-x|^{n+\sigma}} \, dy=|\nabla \bar u^+(x)|^2, \quad \quad \forall x \in \p \bar E,$$
holds in the viscosity sense. Notice that both terms are homogeneous of degree $-\sigma$. 
 
 If $\bar u^+ \equiv 0$ then the study of minimizing cones reduces to the study of $\sigma$-minimal surfaces. This is the case when $\sigma=1$ which was treated in \cite{acks}. Indeed, the homogeneity of a positive harmonic function in a mean-convex cone $E$ which vanishes on $\p E$ cannot be less than 1. This follows since a multiple of the distance function to $\p E$ is superharmonic and is an upper barrier for $\bar u^+$. When $\sigma<1$ it is not clear whether or not there exist minimizing cones with $\bar u \ne 0$ and it seems difficult to relate the $\sigma$-curvature of $\p E$ with the homogeneity of $\bar u^+$.

When $\bar E=\Pi$ is a half-space then $\bar u \equiv 0$ and we call $(0,\Pi)$ a trivial cone. If the blow-up limit $(\bar u, \bar E)$ of a minimizing pair $(u,E)$ is trivial then we say that $0\in \p E$ is a regular point of the free boundary. By Theorem \ref{flat}, $\p E$ is a $C^{1,\gamma}$ surface in a neighborhood of its regular points.
 
 We remark that if $E$ admits an exterior tangent ball at $0 \in \p E$ then $\bar E \subset \Pi$ and $\bar u^+=0$. Then, we use the Euler-Lagrange equation (Lemma \ref{ELvs}) and obtain $E=\Pi$. Thus any point on $\p E$ which admits a tangent ball from $E$ or $E^c$ is a regular point. Therefore the set of regular points is dense in $\p E$.  We summarize these results below.
 
 \begin{prop} Let $(u,E)$ be a minimal pair, $0 \in \p E$, and let $(\bar u, \bar E)$ be its tangent cone as in Proposition \ref{tc}. If $\bar E$ is a half-space (i.e. if $0$ is a regular point) then $\p E$ is a $C^{1,\gamma}$ surface and the free boundary equation \eqref{EL} holds. Moreover, all points on $\p E$ which have a tangent ball from either $E$ or $E^c$ are regular points.
\end{prop}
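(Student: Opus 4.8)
The plan is to collect the ingredients already developed in the preceding pages into a short deductive chain. The first assertion — that a half-space tangent cone implies $C^{1,\gamma}$ regularity of $\partial E$ together with the Euler–Lagrange equation \eqref{EL} — is almost immediate from Proposition \ref{tc} and Theorem \ref{flat}. Concretely, I would argue as follows. Suppose the tangent cone $(\bar u,\bar E)$ at $0$ is trivial, say $\bar E=\Pi=\{x\cdot e>0\}$. By the Hausdorff convergence $E_r\to\bar E$ recorded after Proposition \ref{tc}, for every $\eps_0>0$ there is a scale $r$ so small that the rescaled pair $(u_r,E_r)$ satisfies $\{x_n>\eps_0\}\subset E_r\subset\{x_n>-\eps_0\}$ in $B_1$ (after rotating so that $e=e_n$), and by Theorem \ref{H} we also have $\|u_r\|_{L^\infty(B_1)}\le 1$ after a further harmless rescaling using \eqref{scaling}. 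Choosing $\eps_0$ below the threshold of Theorem \ref{flat}, that theorem gives that $\partial E_r\cap B_{1/2}$ is a $C^{1,\gamma}$ graph satisfying \eqref{EL} in the viscosity sense; undoing the rescaling, $\partial E$ is $C^{1,\gamma}$ in a neighborhood of $0$ and \eqref{EL} holds there.

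For the second assertion I would treat the two cases (tangent ball from $E^c$, tangent ball from $E$) symmetrically, reducing one to the other by interchanging the roles of $u^+$ and $u^-$ and of $E$ and $E^c$ — note that $J$ is symmetric under $(u,E)\mapsto(-u,E^c)$ up to the sign of the curvature term, which is exactly the symmetry reflected in \eqref{EL}. So assume $E$ has an exterior tangent ball at $0\in\partial E$. Passing to the tangent cone $(\bar u,\bar E)$ along the sequence $r_k\to 0$ of Proposition \ref{tc}, the exterior tangent ball dilates to a half-space, so $\bar E\subset\Pi$ for some half-space $\Pi$. Since $\bar u^+$ is a nonnegative harmonic function in $\bar E\subseteq\Pi$, homogeneous of degree $1-\frac\sigma2<1$, vanishing on $\partial\bar E$, a comparison with (a multiple of) the first eigenfunction of the spherical Laplacian on $\bar E\cap S^{n-1}\subseteq\Pi\cap S^{n-1}$ — whose homogeneity is at least that of the half-space, namely $1$ — forces $\bar u^+\equiv 0$; this is the barrier argument already used in the excerpt for the mean-convex case. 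Likewise $\bar u^-\equiv 0$ as noted in the text. Hence $(\bar u,\bar E)=(0,\bar E)$ with $\bar E$ a $\sigma$-minimal set contained in a half-space and admitting an exterior tangent ball; the strong comparison built into Lemma \ref{ELvs} (the first, unconditional inequality there, applied at $0$ to the minimizing cone, combined with the same inequality for the complementary configuration) yields $\kappa_\sigma=0$ pointwise, which together with $\bar E\subset\Pi$ and $0\in\partial\bar E$ forces $\bar E=\Pi$. Thus the tangent cone is trivial, i.e. $0$ is a regular point, and the first part applies.

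The only genuinely delicate point is the step $\bar E\subset\Pi,\ \kappa_\sigma(0)\le 0\ \Rightarrow\ \bar E=\Pi$. I would justify it exactly as in \cite{crs}, Theorem 5.1: for a $\sigma$-minimal set the curvature integrand $\int(\chi_{\bar E^c}-\chi_{\bar E})|y|^{-n-\sigma}\,dy$ at a boundary point with an exterior tangent half-space is strictly positive unless the set is exactly the half-space, because replacing $\bar E$ by $\bar E\cup(\text{small ball on the }\Pi\text{-side})$ strictly decreases the $\sigma$-perimeter unless there was nothing to add; the perturbation estimates of Section 2 (Lemmas \ref{Dbe2.bis}–\ref{Dbe2}) show the Dirichlet contribution to this competition is $O(|A|\,\|u\|_\infty^2)=o(|A|)$ along the blow-up, since $\|u_r\|_\infty\to 0$ at a regular scale. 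Everything else is bookkeeping: the symmetry reduction, the rotation aligning the tangent plane with $\{x_n=0\}$, and the fact — already in the excerpt — that once $0$ is regular the conclusions of Theorem \ref{flat} transfer back to the original pair by scaling. I would close by noting that, since the set of boundary points admitting a tangent ball from one side is dense in $\partial E$ (every point of $\partial E$ is a Lebesgue density point of either $E$ or $E^c$ along some sequence, and one can slide a ball in), the regular set is dense, recovering the last sentence of the statement.
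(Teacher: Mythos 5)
Your proof matches the paper's sketch in outline and is essentially correct: part (i) is Theorem~\ref{flat} after rescaling along the blow-up sequence, and part (ii) runs exterior tangent ball $\Rightarrow\bar E\subset\Pi\Rightarrow\bar u^+\equiv0$ (spherical eigenvalue comparison) $\Rightarrow\kappa_\sigma(0)\le0$ (Lemma~\ref{ELvs}), combined with the pointwise inequality $\chi_{\bar E^c}-\chi_{\bar E}\ge\chi_{\Pi^c}-\chi_{\Pi}$ to force $\bar E=\Pi$, hence a trivial cone and a regular point.

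Two cautions on the middle paragraph. The sentence ``Likewise $\bar u^-\equiv0$ as noted in the text'' is not supported: the text only asserts that \emph{one} of $\bar u^\pm$ vanishes (the phrase ``say $\bar u^-$'' is a labeling convention), and the eigenvalue argument does not apply to $\bar u^-$ in this configuration because $\bar E^c$ \emph{contains} the half-space $\Pi^c$ rather than being contained in one, so the first Dirichlet eigenvalue of $\bar E^c\cap S^{n-1}$ could fall below $n-1$. Fortunately the claim is not needed: Lemma~\ref{ELvs} already gives $\kappa_\sigma(0)\le C\|\bar u^+\|_{L^\infty}^2=0$ using $\bar u^+\equiv0$ alone. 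Likewise, your parenthetical suggestion to apply Lemma~\ref{ELvs} to ``the complementary configuration'' is not available; it would presuppose an interior tangent ball to $\bar E$ at $0$, which is exactly what you are trying to deduce. The lower bound $\kappa_\sigma(0)\ge0$, with equality iff $\bar E=\Pi$, comes solely from the inclusion $\bar E\subset\Pi$ and the resulting pointwise comparison of the integrands, which you do give correctly in your last paragraph. That paragraph is the actual argument (and matches the paper's intent), so it should be the main step rather than a footnote to it.
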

By a standard argument (see Theorem 9.6 in \cite{crs}), we also obtain that the trivial cone has the least energy amongst all minimizing cones. Precisely if $(\bar u, \bar E)$ is a minimizing cone then
$$\Phi_{\bar u} \ge \Phi_\Pi,$$
and if $\bar E$ is not a half-space then
$$\Phi_{\bar u} \ge \Phi_\Pi+ \delta_0$$
for some $\delta_0>0$ depending only on $n$, $\sigma$.

\section{Proof of Theorem \ref{FB reg}}

In this section we prove Theorem \ref{FB reg} using the dimension reduction argument of Federer. As in Section 10 in \cite{crs}, in order to obtain Theorem \ref{FB reg} it suffices to prove the following two propositions.

\begin{prop}\label{dr}
The pair $(u,E)$ is  minimizing in $\R^n$ if and only if $(u(x), E \times \R)$ is minimizing in $\R^{n+1}$.
\end{prop}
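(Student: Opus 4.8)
The plan is to reduce the $n$-dimensional and $(n+1)$-dimensional minimization problems to each other by a Fubini-type argument for each of the two pieces of the energy, namely the Dirichlet integral and the $\sigma$-perimeter. Write $\tilde E := E\times\R \subset \R^{n+1}$, and similarly extend any function $v$ on $\R^n$ to $\tilde v(x,t):=v(x)$ on $\R^{n+1}$, independent of the last coordinate $t$. The first observation is that the $\sigma$-perimeter has the translation-invariant-direction splitting property already used for $\sigma$-minimal surfaces in \cite{crs} (Section 10 there): for competitors that are themselves invariant in the $t$-direction the $\sigma$-perimeter of the slab factors, up to the $(n+1)$-dimensional normalizing constant, as an integral over $t\in\R$ of the $n$-dimensional $\sigma$-perimeter of the slice, so that $\Per_\sigma(\tilde E, B_R^{n+1})$ is, after dividing by the (infinite) length in $t$, comparable to $\Per_\sigma(E, B_R^n)$; the clean way to say this is to work with the \emph{local excess} $\Per_\sigma(F,B_R)-\Per_\sigma(E,B_R)$ for compact perturbations, which is finite, and to check that this excess is unchanged (up to the fixed constant) when one passes from $E,F$ on $\R^n$ to $E\times\R$, $F\times\R$ on $\R^{n+1}$. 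For the Dirichlet term the analogous statement is elementary: if $v$ is independent of $t$ then $\int_{B_R^{n+1}}|\nabla_{x,t}\tilde v|^2 = \int_{B_R^{n+1}}|\nabla_x v|^2$, which by Fubini is again (the $t$-length times) $\int_{B_R^n}|\nabla_x v|^2$ over the appropriate slices.

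The first direction is the easy one: suppose $(u,E)$ is minimizing in $\R^n$; I want $(\tilde u,\tilde E)$ minimizing in $\R^{n+1}$. Take any competitor $(w,F)$ for $(\tilde u,\tilde E)$ that is a compact perturbation in some ball $\mathcal B^{n+1}_R$. The point is that minimality of $(\tilde u,\tilde E)$ among \emph{all} competitors is equivalent to minimality among competitors that are invariant in the $t$-direction: by slicing $(w,F)$ in $t$ and using the Fubini decompositions of both energy pieces, for a.e. slice $t$ the slice $(w(\cdot,t),F_t)$ is an admissible $n$-dimensional competitor for $(u,E)$ in a ball, so its energy excess is $\ge 0$; integrating over $t$ shows the total $(n+1)$-dimensional excess is $\ge 0$. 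Here one has to be slightly careful that the $\sigma$-perimeter does not literally slice as a sum of slice-perimeters (it is nonlocal in $x$ but the key point is only that the \emph{excess} does): the correct statement, taken from \cite{crs}, is that $\Per_\sigma(\tilde F,\mathcal B_R^{n+1})-\Per_\sigma(\tilde E,\mathcal B_R^{n+1})$ equals, up to the constant, $\int_{-\infty}^{\infty}$ of a nonnegative kernel integral which for the $t$-invariant competitor collapses to the $n$-dimensional excess. This gives the ``only if'' implication.

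The converse direction is where the real content lies, and it is the main obstacle: from minimality of $(\tilde u,\tilde E)=(u(x),E\times\R)$ in $\R^{n+1}$ one must deduce minimality of $(u,E)$ in $\R^n$. The difficulty is that an $n$-dimensional competitor $(v,F)$ for $(u,E)$, when extended $t$-invariantly, is \emph{not} a compact perturbation of $(\tilde u,\tilde E)$ — it disagrees with $\tilde u$ on an unbounded cylinder. The standard Federer-type fix, as in Section 10 of \cite{crs}, is to cut off in $t$: given a compact perturbation $(v,F)$ of $(u,E)$ in $B_R^n$, build for each large $T$ a genuine compact perturbation $(w_T,F_T)$ of $(\tilde u,\tilde E)$ inside $B_R^n\times(-T-1,T+1)$ that agrees with $\tilde v,\tilde F$ on $B_R^n\times(-T,T)$ and interpolates back to $\tilde u,\tilde E$ in the two unit-width collars near $t=\pm T$. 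Minimality of $(\tilde u,\tilde E)$ gives excess $\ge 0$; the excess splits as $2T$ times the $n$-dimensional excess of $(v,F)$ over $(u,E)$ (the ``bulk'') plus two ``collar'' error terms. For the Dirichlet collar term the interpolation costs only an $O(1)$ amount independent of $T$. For the $\sigma$-perimeter one uses exactly the estimate from \cite{crs} that the collar contributes $o(T)$, or $O(1)$, as $T\to\infty$ — here is where one genuinely needs the decay of the kernel $|X-Y|^{-(n+1+\sigma)}$ to control interactions across the collars; additionally, one must bound the extra Dirichlet cost created by changing $E$ to $F$ inside the collar using Lemma \ref{Dbe2.bis}, since in our functional perturbing the set perturbs the harmonic replacement. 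Dividing the resulting inequality by $2T$ and letting $T\to\infty$ kills all the collar terms and leaves precisely $J_{B_R^n}(u)\le J_{B_R^n}(v)$, which is the desired $n$-dimensional minimality. The only point requiring care beyond \cite{crs} is the interaction between the two energy pieces in the collar, but since the set change there is supported in a bounded region the Dirichlet cost is $O(1)$ by Lemma \ref{Dbe2.bis}, and after division by $2T$ it disappears as well.
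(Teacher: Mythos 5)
Your proposal misses the tool that makes the paper's proof go through cleanly, and as a consequence has a real gap in the ``only if'' direction and an unaddressed issue in the ``if'' direction.

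The paper does not work directly with the nonlocal perimeter; it first passes to the Caffarelli--Silvestre extension via Proposition~\ref{EXT:TH}, so that the $\sigma$-perimeter is replaced by a \emph{local} weighted Dirichlet energy $\int z^{1-\sigma}|\nabla U|^2$ for the extended function $U$, and minimality is recast as minimality of the pair $(u,U)$ under admissible local perturbations. Once both pieces of the energy are local, the Fubini/slicing arguments of Federer-type dimension reduction (as in Theorem~10.1 of~\cite{crs}) apply without the nonlocal headaches you wrestle with. Your ``only if'' step relies on slicing a general competitor $(w,F)$ in $t$ and asserting that the perimeter excess ``collapses to the $n$-dimensional excess,'' but that identity is only valid for $t$-invariant sets: for a general compact perturbation $F$ in $\R^{n+1}$, the kernel $|X-Y|^{-(n+1+\sigma)}$ couples distinct slices and the $\sigma$-perimeter excess simply does not decompose as an integral over $t$ of slice excesses. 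This is precisely the difficulty that the extension removes, and without it the argument does not close.

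In the ``if'' direction your collar/cut-off strategy is structurally close to what both~\cite{crs} and this paper do, but you omit what the paper explicitly singles out as \emph{the} new point compared with the $\sigma$-minimal surface case: the competitor is a \emph{pair} $(w,W)$ (or $(w,F)$ in your notation) subject to the admissibility constraints $w\ge 0$ on $F$ and $w\le 0$ on $F^c$, and when you interpolate the set from $\tilde F$ back to $\tilde E$ inside the collar there is no reason that the interpolated function keeps a sign compatible with the interpolated set. The paper's construction handles this by first cutting the function off so that $w\equiv 0$ in the inner cylinder where the set is free to vary, after which any choice of $W$ with trace $\pm1$ is admissible, and only then invoking the interpolation of~\cite{crs}, Lemma~10.2. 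Your invocation of Lemma~\ref{Dbe2.bis} to control the Dirichlet cost in the collar is also off target: that lemma is about the cost of taking a \emph{harmonic} replacement when removing a piece of $E$, whereas in the collar you are using an explicit interpolating competitor, whose energy you should just compute directly (it is $O(1)$), and the genuine issue is admissibility, not an energy bound.
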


\begin{prop}\label{2d}
In dimension $n=2$, all minimizing cones are the trivial.
\end{prop}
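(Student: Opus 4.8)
The plan is to classify minimizing cones in $\R^2$ by combining the energy-gap inequality from Section 5 with the structure of homogeneous harmonic functions in a two-dimensional cone. Recall that a minimizing cone $(\bar u, \bar E)$ has $\bar U$ homogeneous of degree $0$ and $\bar u$ homogeneous of degree $1-\frac\sigma2$, and that (as noted after Proposition \ref{tc}) at least one phase, say $\bar u^-$, vanishes identically, so that $\bar E$ is a cone in $\R^2$ whose boundary is a finite union of rays emanating from the origin, while $\bar u = \bar u^+$ is a nonnegative homogeneous harmonic function of degree $1-\frac\sigma2 \in (\tfrac12,1)$ vanishing on $\partial \bar E$. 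First I would observe that, by the dimension-reduction result Proposition \ref{dr}, the trace of any blow-up of $(\bar u, \bar E)$ at a point of $\partial \bar E \setminus \{0\}$ is again a minimizing cone in $\R^2$, and such a blow-up is translation-invariant in one direction; hence it is a cone of the form $(0, \Pi)$ or $\bar u$ times a half-line structure that is in fact one-dimensional. Away from the origin, $\partial \bar E$ is therefore locally a $C^{1,\gamma}$ curve by Theorem \ref{flat} (applied after flattening), so $\partial \bar E$ consists of finitely many rays and $\bar u$ is smooth up to $\partial \bar E$ away from $0$, satisfying the Euler–Lagrange equation \eqref{EL} classically there.

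The next step is to rule out the nontrivial case by a direct argument on the rays. If $\bar E$ is neither $\R^2$, $\emptyset$, nor a half-plane, then $\partial \bar E$ contains at least two rays, so $\bar E$ (or $\bar E^c$) contains an open sector of opening angle $\theta < \pi$. A nonnegative homogeneous harmonic function vanishing on the two edges of a sector of opening $\theta$ is a multiple of $r^{\pi/\theta}\sin(\pi\varphi/\theta)$, hence has homogeneity $\pi/\theta > 1$; since $\bar u$ has homogeneity $1-\frac\sigma2 < 1$, this forces $\bar u \equiv 0$ in that sector, and by unique continuation (or directly, since $\bar u$ is harmonic and homogeneous on each component) $\bar u \equiv 0$ on the whole component. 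Arguing the same way on every component shows $\bar u \equiv 0$, so the cone reduces to a $\sigma$-minimal cone in $\R^2$. But $\sigma$-minimal cones in $\R^2$ are half-planes: this is precisely the content of the two-dimensional classification in \cite{crs} (Theorem 10.? there), which one reproves here by noting that $\partial \bar E$ is a union of rays, that the $\sigma$-curvature \eqref{EL} must vanish on each ray since $\bar u \equiv 0$, and that a configuration of two or more rays has strictly nonzero $\sigma$-curvature on at least one of them (the half-plane being the unique balanced configuration) — alternatively one invokes the energy gap $\Phi_{\bar u} \ge \Phi_\Pi + \delta_0$ together with the fact that a sector of opening $\ne \pi$ can be strictly decreased in energy by the $\sigma$-minimal surface comparison.

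I would then assemble these pieces: either $\bar u \not\equiv 0$, which we have just excluded, or $\bar u \equiv 0$ and $\bar E$ is a $\sigma$-minimal cone in $\R^2$, which is a half-plane; in both cases $(\bar u, \bar E) = (0,\Pi)$ is trivial. The main obstacle I anticipate is the step asserting that a nonnegative harmonic function with homogeneity strictly below $1$ cannot vanish on the boundary of a proper cone in $\R^2$: while in two dimensions this is essentially explicit via the conformal map of a sector, one must be careful when $\bar E$ is disconnected or when $\partial \bar E$ contains more than two rays, and must rule out the degenerate possibility that some component of $\bar E$ is a half-plane carrying a nonzero linear function $\bar u$ (homogeneity exactly $1$, not $1-\frac\sigma2$) — this is incompatible with the homogeneity, so it is excluded, but the bookkeeping across components is the delicate part. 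A secondary subtlety is that the $C^{1,\gamma}$ regularity away from the origin requires knowing the blow-ups at non-origin points are trivial, which is where Proposition \ref{dr} and the one-dimensional triviality of translation-invariant cones are used; once that is in place the ray structure of $\partial \bar E$ is rigorous and the rest is elementary.
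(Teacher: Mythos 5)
Your approach is genuinely different from the paper's (which uses a translation--perturbation argument in the style of \cite{CVcone, CVgen} and never needs to touch the ray structure of $\partial \bar E$ or the homogeneity of $\bar u$), but it has a gap at the crucial step. You assert that the homogeneity $1-\frac\sigma2 < 1$ forces $\bar u \equiv 0$ because a nonnegative homogeneous harmonic function on a sector of opening $\theta < \pi$ has homogeneity $\pi/\theta > 1$. That is true, but it only kills the phase living on the thinner sector --- which, as noted after Proposition \ref{tc}, is exactly the phase already known to vanish (it is the one whose natural homogeneity is $\ge 1$). The surviving phase lives on the sector of opening $\theta > \pi$; there the fundamental positive harmonic $r^{\pi/\theta}\sin(\pi\varphi/\theta)$ has homogeneity $\pi/\theta < 1$, and for $\theta = 2\pi/(2-\sigma)$ it matches $1-\frac\sigma2$ exactly. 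So homogeneity alone does not rule out a nontrivial $\bar u^+$ on such a sector, and the assertion you flag as your ``main obstacle'' --- that a nonnegative harmonic function of homogeneity below $1$ cannot vanish on the boundary of a proper cone in $\R^2$ --- is simply false.

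To close this gap you would need to invoke the Euler--Lagrange equation \eqref{EL} in the $\bar u\not\equiv 0$ branch as well: for a sector $\bar E$ of opening $\theta > \pi$, the $\sigma$-curvature at a non-vertex boundary point is strictly negative, whereas the right-hand side $|\nabla\bar u^+|^2 - |\nabla\bar u^-|^2 = |\nabla\bar u^+|^2 \ge 0$, which gives a contradiction; a similar sign check is needed for multi-component configurations. Your proposal only brings in the Euler--Lagrange/$\sigma$-curvature information after first reducing to $\bar u \equiv 0$, so as written the argument does not go through. The paper explicitly flags this as the delicate point (``it seems difficult to relate the $\sigma$-curvature of $\p E$ with the homogeneity of $\bar u^+$''), and for that reason it bypasses the classification of $\bar u$ entirely: it shifts the minimizer by $\pm e_1$ inside $B_{R/2}$, shows the excess energy of the shifted competitors is $O(R^{n-2-\sigma})$, which tends to $0$ when $n=2$, and uses the strong maximum principle to extract a fixed energy defect $\delta>0$ whenever $\bar E$ fails to be a half-plane.
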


\noindent{\it Proof of Proposition \ref{dr}.} The proof is similar to the one of Theorem 10.1 in \cite{crs}. We just sketch the main difference. The only issue that needs to be discussed is the existence of a perturbation which is admissible when we prove that  $(u,E)$ is minimizing in $\R^n$ if $(u(x), E \times \R)$ is minimizing in $\R^{n+1}$.
 
Precisely let $v(x)$, $V(x,z)$  be admissible functions which coincide with $u$, respectively $U$ say outside $\mathcal B^+_{1/2}$. 
It suffices to construct an admissible pair $w(x,x_{n+1})$ and $W(x,x_{n+1},z)$ in one dimension higher 
i.e. in $\mathcal B_1 \times [0,1]$ such that on the $n$ dimensional slice $x_{n+1}=0$, 
$(w,W)$ coincides with $(u,U)$, and on the slice $x_{n+1}=1$, $(w,W)$ coincides with $(v,V)$. 

For $x_{n+1}\in [0,1/4]$ we define $$W(x,x_{n+1},z)=U(x,z), \quad \mbox{and} \quad w(x,x_{n+1}):=(1-\varphi+ \varphi \eta(x))u(x)$$
with $\varphi=\varphi(x_{n+1})$ a smooth function vanishing for $x_{n+1} \le 0$ and which equals 1 for $x_{n+1}\ge 1/4$. 
The  function $\eta$ above is a cutoff function which vanishes in $B_{1/2}$ and equals 1 outside $B_{3/4}$.

Similarly we construct $W$ and $w$ for $x_{n+1}\in [3/4,1]$, by using the pair $(v,V)$. 

In the interval $x_{n+1}\in [1/4, 3/4]$ we extend $w$ to be constant in the $x_{n+1}$ variable. 
We also extend $W$ to be constant in the annulus $\mathcal B_1^+\setminus \mathcal B_{1/2}^+$.
It remains to construct $W$ in the inner cylinder $\mathcal B_{1/2} \times [1/4,3/4]$. 
Since $w=0$ on the ``bottom" of this cylinder, any choice for $W$ with trace $\pm 1$ on $\{x_{n+1}=0\}$ makes the pair $(w,W)$ admissible. 
Now we can argue precisely as in the proof of the  $\sigma$-minimal surfaces, and the construction for the interpolating $W$ is given in Lemma~10.2 in~\cite{crs}.~\qed

\noindent{\it Proof of Proposition \ref{2d}.}
We follow the methods in \cite{CVcone, CVgen} where the same result was proved for $\sigma$-minimal surfaces. 
We remark that the assumption that $n=2$ is only necessary
at the end of the proof.
We define
$${\mathcal{E}}_r (v,V):= \int_{B_\rho}|\nabla v|^2\,dx + c_{n,\sigma} \int_{\mathcal B_r^+} z^{1-\sigma}|\nabla V(X)|^2\,dX.$$
By Proposition \ref{EXT:TH}, we know that $(u,U)$
minimizes ${\mathcal{E}}$ under domain variations. We consider a diffeomorphism on $\R^{n+1}$ given, for any
$X\in\R^{n+1}_+$ by
\begin{equation}\label{d.1} X \mapsto Y:=X+\varphi(|X|/R)e_1,\end{equation}
where $\varphi\in C^\infty(\R)$, $\varphi=1$ in $[-1/2,1/2]$
and $\varphi=0$ outside $(-3/4,3/4)$, and $R$ is a large parameter.
We define $U^+_R(Y):=U(X)$ and similarly, if we change $e_1$ into $-e_1$ in \eqref{d.1},
we may define $U^-_R$.
The diffeomorphism in \eqref{d.1} restricts to
a diffeomorphism in $\R^n$ just by considering points
of the type $X=(x,0)$, i.e.
$$ y:=x+\varphi(|x|/R)e_1.$$
and we set $u^+_R(y):=u(x)$, and similarly  we define $u^-_R$.
We claim that
\begin{equation}\label{7.8}
{\mathcal{E}}_R(u^+_R,U^+_R)+
{\mathcal{E}}_R(u^-_R,U^-_R)-2{\mathcal{E}}_R(u,U)\le C R^{n-2-\sigma},
\end{equation}
for some $C$ independent of $R$. 
By Proposition \ref{EXT:TH}, the minimality of $(u,U)$ gives 
$${\mathcal{E}}_R(u,U)\le {\mathcal{E}}_R(u^-_R,U^-_R),$$
and the last two inequalities imply
\begin{equation}\label{7.9}
{\mathcal{E}}_R(u_R^+,U_R^+)\le {\mathcal{E}}_R(u,U) + C R^{n-2-\sigma}.
\end{equation}
To prove~\eqref{7.8}, by direct calculations (or see formula~(11) in~\cite{CVcone}) we obtain
\begin{align*}
\Big( |\nabla u^+_R|^2+|\nabla u^-_R|^2 \Big)\,dy &=
2(1+O(1/R^2)\chi_{B_R\setminus B_{R/2}})|\nabla u|^2\,dx\
\\
z^{1-\sigma}\Big( |\nabla U^+_R|^2+|\nabla U^-_R|^2 \Big)\,dY&=
2z^{1-\sigma}(1+O(1/R^2)\chi_{B_R^+\setminus B_{R/2}^+})|\nabla U|^2\,dX.
\end{align*}
We use that $|\nabla u(x)|^2$ and $z^{1-\sigma} |\nabla U(X)|^2$ are homogeneous of degree $-\sigma$ respectively $-1-\sigma$
and obtain 
\begin{align*}
 \int_{B_R}&\Big(|\nabla u^+_R|^2+|\nabla u^-_R|^2 \Big)\,dy -
2\int_{B_R}|\nabla u|^2\,dx \\
&\le CR^{-2}\int_{B_R\setminus B_{R/2}}|\nabla u|^2\,dx \le  CR^{-2} \cdot R^{n-\sigma}
\end{align*}
and
\begin{align*}
 \int_{B_R^+}&
z^{1-\sigma}\Big( |\nabla U^+_R|^2+|\nabla U^-_R|^2 \Big)\,dY-
2\int_{B_R^+}
2z^{1-\sigma}|\nabla U|^2\,dX \\ 
& \le CR^{-2}\int_{B_R^+\setminus B_{R/2}^+}z^{1-\sigma}|\nabla U|^2\,dX \le CR^{-2} \cdot R^{n-\sigma}
\end{align*}
and so the proof of~\eqref{7.8} is complete.

Next we perform an argument similar
to the one
of Theorem~1 of~\cite{CVcone} (the main difference here is that
two functions are involved in the minimization procedure instead of
a single one). For this, we assume now that~$n=2$, we argue
by contradiction and we suppose that $E$ is not a halfplane.
Thus,
there exist~$M>0$ and~$p\in B_M$, say on the $e_2$-axis, such that $p$
lies in the interior of $E$,
and~$p+ e_1$ and $p-e_1$ lie in $E^c$.
Therefore, if~$R$ is sufficiently large we have that
\begin{equation}\label{76}\begin{split}
&{\mbox{$u^+_R(x)=u(x-e_1)$,
for all $x\in B_{2M}$}}\\
&{\mbox{$U^+_R(X)=U(X-e_1)$,
for all $X\in B_{2M}^+$,}}\\
&{\mbox{$u^+_R(x)=U(x)$ for all $x\in\R^2
\setminus B_{R}$, and}}\\
&{\mbox{$U^+_R(X)=U(Y)$ for all $X\in \R^{3}_+\setminus
B_{R}^+$.}}
\end{split}\end{equation}
We define
\begin{equation*}\begin{split}
&v_R(x):=\min \{ u(x), \,u^+_R(x)\}, \quad
w_R(x):=\max \{ u(x), \,u^+_R(x)\},\\
&V_R(X):=\min \{ U(X), \,U^+_R(X)\} \quad  {\mbox{ and }} \quad
W_R(X):=\max \{ U(X), \,U^+_R(X)\}\end{split}\end{equation*}
and~$P:=(p,0)\in\R^3$. {F}rom~\eqref{76} and the trace property of~$U$ we have that
\begin{eqnarray}
&&{\mbox{$U^+_R<W_R=U$
in a neighborhood of~$P$, and}} \label{not1}\\
&&{\mbox{$U<W_R=U^+_R$
in a neighborhood of~$P+e_1$.}}\label{not2}\end{eqnarray}
Moreover
$$ {\mathcal{E}}_R(u,U)\le {\mathcal{E}}_R(v_R,V_R)$$
and
$$
{\mathcal{E}}_R(v_R,V_R)+{\mathcal{E}}_R(w_R,W_R)=
{\mathcal{E}}_R(u,U)+{\mathcal{E}}_R(u^+_R,U^+_R),$$
therefore
\begin{equation}\label{56}\begin{split}
{\mathcal{E}}_R(w_R,W_R)\le {\mathcal{E}}_R(u^+_R,U^+_R).
\end{split}\end{equation}
Now we observe that
\begin{equation*}\label{NOT}
{\mbox{ $(w_R,W_R)$ is not a minimizer for~${\mathcal{E}}_{2M}$}}\end{equation*}
with respect to compact perturbations
in~$B_{2M}\times \mathcal B_{2M}^+$.
Otherwise~$W_R$ would be a minimizer too: then
the fact that~$U\le W_R$, \eqref{not1} and the strong maximum principle
would give that~$U=W_R$ in~$\mathcal B^+_{2M}$, but this would be
in contradiction with~\eqref{not2}.
Thus there exists~$\delta>0$ and
a competitor
\begin{equation*}\label{098}{\mbox{$(u_*,U_*)$ that coincides with $(w_R,W_R)$ outside~$B_{2M}\times \mathcal B_{2M}^+$}}
\end{equation*}
(with $u_*=w_R$) and such that
\begin{equation*}\label{92}
{\mathcal{E}}_{2M}(u_*,U_*)+\delta \le {\mathcal{E}}_{2M}(w_R,W_R).
\end{equation*}
Here~$\delta>0$ is independent of~$R$
since~$(w_R,W_R)$
does not depend on $R$ when
restricted to~$B_{2M}\times \mathcal B_{2M}^+$
(recall~\eqref{76}). We
conclude that
\begin{equation*}
{\mathcal{E}}_R (u_*,U_*)+\delta \le {\mathcal{E}}_R(w_R,W_R).\end{equation*}
Combining this with \eqref{7.9} and~\eqref{56} we obtain
$$ {\mathcal{E}}_R (u_*,U_*)+\delta\le {\mathcal{E}}_R(w_R,W_R)\le {\mathcal{E}}_R(u^+_R,U^+_R)
\le {\mathcal{E}}_R(u,U)+CR^{-\sigma}.$$
If~$R$ is large enough
we obtain that~${\mathcal{E}}_R (u_*,U^*) < {\mathcal{E}}_R(u,U)$,
which contradicts the minimality of~$(u,U)$
and completes the proof of Proposition~\ref{2d}.~\qed

\section{Proofs of Lemmas \ref{Dbe2.bis} - \ref{c1g}}

In this section we estimate the difference in the Dirichlet energies of the harmonic replacements in two different sets $E$ and $E\setminus A$, with $A\subset B_{3/4}$. We assume that $\varphi \in H^1(B_1) \cap L^\infty(B_1)$, $\varphi \ge 0$, and let
$$w:=\varphi_{E^c}, \quad \quad v:=\varphi_{E^c \cup A}.$$
Here above, we used
the notation for the harmonic replacements
of $\varphi$ that vanish in $E^c$ and $E^c\cup A$, as introduced in Definition \ref{H.R}.
We remark that the existence of $v$ follows from the existence of $w$. Indeed, given $w$ we can easily find an explicit test function with finite energy which vanishes in $E^c \cup B_{3/4}$, for example a function of the form $w(1-\eta)$ with $\eta$ a cutoff function.

Since $w$ minimizes the Dirichlet energy among all functions which are fixed in $E^c$ and have prescribed values on $\p B_1$ we find
\begin{equation}\label{eleq}
\int_{B_1}\nabla w \cdot \nabla \psi \, dx=0, \quad \quad \quad \forall \psi \in H_0^1(B_1) \quad \mbox{with $\psi =0$ a.e. in $E^c$,}
\end{equation}
and therefore
\begin{equation}\label{eleq1}
\int_{B_1} |\nabla (w-\psi)|^2-|\nabla w|^2 \, \, dx=\int_{B_1}|\nabla \psi|^2 \, dx.
\end{equation}

By definition, $v$ minimizes the Dirichlet energy among all functions which equal $w$ on $\p B_1$, and are $0$ a.e. in $E^c\cup A$. We may relax this last condition to functions that are equal to $0$ a.e. in $E^c$ and are nonpositive in $A$, since then we can truncate them wherever they are negative. This and \eqref{eleq1} show that
\begin{equation}\label{dec}
\int_{B_1} |\nabla v|^2-|\nabla w|^2 \, \, dx= \inf_{\psi \in \mathcal A} \int_{B_1} |\nabla \psi|^2 dx
\end{equation}
where
$$\mathcal A:=\{\psi \in H_0^1(B_1), \quad \psi=0 \quad \mbox{a.e. in $E^c$}, \quad  \psi \ge w \quad \mbox{a.e. in $A$}\}.$$
We use this characterization and show that the difference between the energies of $v$ and $w$ depends monotonically on $\varphi$, $E$ and $A$. Precisely, for $i=\{1,2\}$ let $w_i$, $v_i$ be the corresponding functions for $\varphi_i$, $E_i$, $A_i$.

 \begin{lemma}\label{comp}
Assume $$\varphi_1 \le \varphi_2, \quad E_1 \subset E_2, \quad  A_1 \subset A_2.$$ Then
$$ \int_{B_1} |\nabla v_1|^2-|\nabla w_1|^2 \, \, dx\le \int_{B_1} |\nabla v_2|^2-|\nabla w_2|^2 \, \, dx. $$
 \end{lemma}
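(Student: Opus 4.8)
The plan is to use the variational characterization \eqref{dec} of the energy difference, namely
$$\int_{B_1}|\nabla v_i|^2-|\nabla w_i|^2\,dx=\inf_{\psi\in\mathcal A_i}\int_{B_1}|\nabla\psi|^2\,dx,$$
where $\mathcal A_i:=\{\psi\in H^1_0(B_1):\psi=0\text{ a.e.\ in }E_i^c,\ \psi\ge w_i\text{ a.e.\ in }A_i\}$, and to show directly that the admissible classes are nested in the right way, i.e.\ that every competitor for the smaller energy difference can be turned into a competitor for the larger one with no increase of Dirichlet energy. Since taking an infimum over a larger class only decreases it, monotonicity of the energy difference would then be immediate --- except that the two classes are not literally nested, so the main point is to construct the comparison competitor.

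First I would record the monotonicity of the harmonic replacement in the obstacle: from $\varphi_1\le\varphi_2$ and $E_1\subset E_2$ one gets $w_1\le w_2$ pointwise (a.e.). This is the standard comparison for harmonic replacements --- both $w_i$ are subharmonic, and one compares $\min\{w_1,w_2\}$ and $\max\{w_1,w_2\}$ against the minimizers in the respective admissible sets, using that $\min\{w_1,w_2\}$ is admissible for the $\varphi_1,E_1$ problem and $\max\{w_1,w_2\}$ for the $\varphi_2,E_2$ problem, together with the Dirichlet energy identity $\int|\nabla\min|^2+\int|\nabla\max|^2=\int|\nabla w_1|^2+\int|\nabla w_2|^2$; strict convexity forces $w_1\le w_2$. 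Next, given any $\psi_1\in\mathcal A_1$, I would show that $\psi_1\in\mathcal A_2$ as well. Indeed $\psi_1=0$ a.e.\ in $E_1^c\supset E_2^c$, so the vanishing condition passes; and on $A_1$ we have $\psi_1\ge w_1$, but on $A_2\setminus A_1$ the constraint $\psi_1\ge w_2$ need not hold. So the literal inclusion $\mathcal A_1\subset\mathcal A_2$ fails, and instead of $\psi_1$ I would feed $\psi_2:=\max\{\psi_1,v_2\}$ (or equivalently modify $\psi_1$ on $A_2$) into $\mathcal A_2$.

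The key step, and the one I expect to be the main obstacle, is to verify that $\psi_2:=\max\{\psi_1,v_2\}$ --- where $v_2$ is the actual minimizer, so $v_2=w_2$ a.e.\ on $A_2$ --- is admissible for the class $\mathcal A_2$ and satisfies $\int_{B_1}|\nabla\psi_2|^2\le\int_{B_1}|\nabla\psi_1|^2$. Admissibility: $\psi_2=0$ a.e.\ in $E_2^c$ since both $\psi_1$ (being zero on $E_1^c\supset E_2^c$) and $v_2$ vanish there; and on $A_2$ we have $\psi_2\ge v_2\ge w_2$ by construction, so the obstacle condition holds. For the energy bound I would use that $v_2$ minimizes $\int|\nabla\cdot|^2$ over $\mathcal A_2$ together with the parallelogram-type identity: set $\psi_2=\max\{\psi_1,v_2\}$ and $\psi_2'=\min\{\psi_1,v_2\}$, so $\int|\nabla\psi_2|^2+\int|\nabla\psi_2'|^2=\int|\nabla\psi_1|^2+\int|\nabla v_2|^2$. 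The function $\psi_2'=\min\{\psi_1,v_2\}$ still vanishes on $E_2^c$ but may fail $\ge w_2$ on $A_2$; however, it vanishes on $E_2^c$ and on $A_2$ it equals $\min\{\psi_1,w_2\}$. If $\psi_1\ge w_2$ on $A_2$ then $\psi_2'$ itself lies in $\mathcal A_2$ and minimality of $v_2$ gives $\int|\nabla\psi_2'|^2\ge\int|\nabla v_2|^2$, hence $\int|\nabla\psi_2|^2\le\int|\nabla\psi_1|^2$; in general one truncates $\psi_1$ from below against $v_2$ only where needed, or runs the argument with the relaxed class from \eqref{dec} (functions vanishing on $E_2^c$, nonpositive on $A_2$ after subtracting $v_2$). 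Taking the infimum over $\psi_1\in\mathcal A_1$ then yields
$$\int_{B_1}|\nabla v_1|^2-|\nabla w_1|^2\,dx=\inf_{\psi_1\in\mathcal A_1}\int_{B_1}|\nabla\psi_1|^2\,dx\ \ge\ \text{(sic)}$$
--- wait, I want the reverse: since each $\psi_1\in\mathcal A_1$ produces an admissible $\psi_2$ for $\mathcal A_2$ of no greater energy is the \emph{wrong} direction. Let me instead argue in the other direction: I would start from a competitor $\psi_2\in\mathcal A_2$ for the larger problem and build $\psi_1:=\psi_2\cdot\chi_{E_1}$ truncated appropriately; but the cleanest route is to observe $v_1$ is itself, after multiplication by a suitable cutoff supported in $E_1$ and adjusted on $A_1$, a competitor for $\mathcal A_2$... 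This is exactly the delicate point. The honest plan: work with the relaxed characterization, show $\mathcal A_1$-competitors are, via the truncation $\max\{\cdot,v_2\}$, mapped into $\mathcal A_2$-competitors with controlled energy, so $\inf_{\mathcal A_2}\le\inf_{\mathcal A_1}$ is what that gives --- so in fact this shows the energy difference for index $2$ is $\le$ that for index $1$, the opposite of the claim. Hence the correct construction must go $\mathcal A_2\to\mathcal A_1$: given $\psi_2\in\mathcal A_2$, the function $\psi_1:=\min\{\psi_2,\widetilde w\}$ truncated to vanish on $E_1^c$, where one exploits $w_1\le w_2$ to keep the obstacle on $A_1\subset A_2$. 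I expect the bookkeeping of which truncation preserves which class --- and in particular getting the inequality to point the way the lemma states --- to be the sole real difficulty; everything else is the standard min/max energy identity and comparison principle for the obstacle problem.
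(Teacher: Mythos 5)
Your proposal correctly sets up \eqref{dec} as the tool and, after some back-and-forth, identifies the right direction: one needs to turn the minimizer (or a competitor) of the $\mathcal A_2$-problem into a competitor for $\mathcal A_1$ without increasing the Dirichlet energy. But you stop exactly at the point where the real content begins, and the piece you call ``bookkeeping'' is in fact the key idea, which your sketch does not supply.

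Concretely, the obstruction is that the minimizer $\psi_2^*=w_2-v_2$ of $\inf_{\mathcal A_2}$ vanishes on $E_2^c$ but not on the larger set $E_1^c$, and there is no elementary truncation that kills it on $E_1^c$ while provably decreasing energy. Your candidate $\min\{\psi_2,\widetilde w\}$ is never pinned down; if one tries the natural choice $\widetilde w=w_1$, the min/max identity $\int|\nabla\min|^2+\int|\nabla\max|^2=\int|\nabla\psi_2^*|^2+\int|\nabla w_1|^2$ only helps if $\max\{\psi_2^*,w_1\}$ is a competitor for $w_1$, which it is not: on $E_1^c\setminus E_2^c$ one has $w_1=0$ and $\psi_2^*\ge 0$ possibly positive, so the max fails the constraint $=0$ a.e.\ in $E_1^c$. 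So the truncation route does not close. What the paper does instead is a genuinely different move: it introduces $\bar v_2$, the Dirichlet minimizer among functions equal to $v_2$ a.e.\ in $E_1^c$ with $\bar v_2-v_2\in H^1_0(B_1)$. Since $\bar v_2$ still vanishes on $E_2^c$ and has boundary trace $\varphi_2$, it is a competitor for $w_2$, giving $\int|\nabla w_2|^2\le\int|\nabla\bar v_2|^2$; combined with the orthogonality of $\bar v_2$ against $H^1_0$ functions vanishing in $E_1^c$, this yields $\int|\nabla(\bar v_2-v_2)|^2=\int|\nabla v_2|^2-|\nabla\bar v_2|^2\le\int|\nabla v_2|^2-|\nabla w_2|^2$. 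The function $\bar v_2-v_2$ then vanishes on $E_1^c$ by construction and, since $v_2=0$ on $A_1\subset A_2$, equals $\bar v_2$ there; so one must show $\bar v_2\ge w_1$. This is a small maximum-principle argument with $h=(w_1-\bar v_2)^+$: using $\varphi_1\le\varphi_2$ to get $h\in H^1_0(B_1)$ and $h=0$ on $E_1^c$, the Euler equations for $w_1$ and $\bar v_2$ force $\int|\nabla h|^2=0$. (Your direct comparison $w_1\le w_2$ is correct but is not what is needed here; the relevant comparison is against $\bar v_2$.) With $\bar v_2-v_2\in\mathcal A_1$ established, \eqref{dec} gives the stated inequality. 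In short, the proposal correctly locates the difficulty but has a genuine gap at the construction of the $\mathcal A_1$-competitor; the harmonic replacement $\bar v_2$ is the missing idea, and your proposed truncation does not substitute for it.
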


\begin{proof}
Let $\bar v_2$ minimize
the Dirichlet integral in $B_1$ among all the functions
that equal $v_2$ a.e. in $E_1^c$ and $\bar v_2-v_2 \in H_0^1(B_1)$. Notice that $\bar v_2$ is well defined since $v_2$ is a test function with finite energy, so the minimizer exists by direct methods.
As in \eqref{eleq} and \eqref{eleq1} above, we find
$$\int_{B_1} |\nabla v_2|^2-|\nabla \bar v_2|^2 \, \, dx=\int_{B_1} |\nabla (\bar v_2- v_2)|^2 \, \, dx.$$

Since $\bar v_2=v_2=0$ a.e. in $E_2^c \subset E_1^c$, and $\bar v_2= w_2$ on $\p B_1$ we find from the definition of $w_2$ that
$$
\int_B |\nabla w_2|^2\,dx\le
\int_B |\nabla \bar v_2|^2\,dx.
$$
hence
$$\int_{B_1} |\nabla (\bar v_2- v_2)|^2 \, \, dx=\int_{B_1} |\nabla v_2|^2-|\nabla \bar v_2|^2 \, \, dx \le \int_{B_1} |\nabla v_2|^2-|\nabla w_2|^2 \, \, dx.$$
Using the characterization in \eqref{dec} for $v_1$, $w_1$ it suffices to show that $\bar v_2-v_2 \in \mathcal A_1$. By construction $\bar v_2-v_2 \in H_0^1(B_1)$, $\bar v_2 - v_2 =0$ a.e. in $E_1^c$ and $\bar v_2-v_2=\bar v_2$ a.e. in $A_1 \subset A_2$. It remains to check that $\bar v_2 \ge w_1$ which follows by maximum principle.

Indeed, let $h:= (w_1-\bar v_2)^+$. We have $h=0$ a.e. in $E_1^c$ and also $h \in H_0^1(B_1)$ since $\varphi_1 \le \varphi_2$. {F}rom the definitions of $w_1$, $\bar v_2$ (see \eqref{eleq}) we obtain
$$ \int_{B_1} \nabla w_1 \cdot\nabla h\,dx=0, \quad \quad \int_{B_1} \nabla \bar v_2\cdot\nabla h\,dx=0.$$
Then
$$ \int_{B_1} |\nabla (w_1- \bar v_2)^+|^2\,dx
=\int_{B_1} \nabla (w_1-\bar v_2)\cdot\nabla h \,dx=0,$$
and the desired inequality $w_1 \le \bar v_2$ is proved.
\end{proof}

\noindent{\it Proof of Lemma \ref{Dbe2.bis}.} After dividing $w$ and $v$ by an appropriate constant, we may assume that $\|w\|_{L^\infty(B_1)}=1$. Then by Lemma  \ref{comp} it suffices to prove our bound in the case when $\varphi =1$, $B_1 \setminus B_\rho \subset E$ and $A= B_\rho \cap E$. In this case
$$v=c(\rho^{2-n}-|x|^{2-n})^+$$ for an appropriate $c$, and using symmetric rearrangement we see that the Dirichlet integral of $w$ is minimized whenever $w$ and the set $A$ are radial. Therefore we need to prove the lemma only in the case when $E=B_r^c$, $A=B_\rho \setminus B_r$, for some $r \le \rho$. We have
$$
\int_{B_1}|\nabla v|^2-|\nabla w|^2 \, dx  = \int_{B_1} |\nabla (w-v)|^2 dx = \int_{B_1 \setminus B_r} (w-v) \triangle (v-w).$$
Using that in $B_\rho \setminus \overline B_r$
$$\triangle (v-w)=\triangle v = v_{\nu} d \mathcal H^{n-1}|_{\p B_\rho},$$
and that $w-v=w \le C r$ on $\p B_\rho$ we find
$$\int_{B_1}|\nabla v|^2-|\nabla w|^2 \, dx  \le  C r \le C |A|,$$
and the lemma is proved.~\qed

\noindent{\it Proof of Lemma \ref{Dbe2}.} Assume that $\|w\|_{L^\infty(B_1)}=1$ and as before, by Lemma \ref{comp}, it suffices to obtain the bound in the case when $\varphi=1$ and $E=B_{1/2}$. Then $$w:=c(2^{n-2}-|x|^{2-n})^+$$ for an appropriate $c$, and let $$ \bar v:= \min\{w, C_0 d_A\},$$
where $d_A$ represents the distance to the closed set $A$, and $C_0$ is a large constant depending only on $n$. Notice that by construction $\bar v-\varphi \in H^1_0(B_1)$, $\bar v=0$ in $A$ and $\bar v$ has bounded Lipschitz norm. Then
$$\int_{B_1}|\nabla v|^2-|\nabla w|^2 \, dx  \le \int_{B_1}|\nabla \bar v|^2-|\nabla w|^2 \, dx  \le C |S|,$$
where $S:=\{\bar v < w\}$.
It remains to show that $|S| \le C(\beta) |A|$ which follows the uniform density property of $A$.

By choosing $C_0$ sufficiently large we have
$$ S \subset \{C_0 d_A < w  \} \subset \{ 6 d_A  < d_{\p B_{1/2}} \}.$$ Thus if $x \in S$ and $y \in \p A$ is the closest point to $x$ then it easily follows that
 $$x \in B_{d_y/5} (y)  \quad \quad \mbox{ with} \quad d_y:= d_{\p B_{1/2}}(y).$$
Hence by Vitali's lemma we can find a collection of disjoint balls $B_{d_{y_i}/5}(y_i)$ such that$$S \subset \, \bigcup_i \, B_{d_{y_i}}(y_i).$$  Thus, by adding the inequalities  $$|A \cap B_{d_{y_i}/5}(y_i)| \ge c(\beta) |B_{d_{y_i}}(y_i)  |$$
we obtain that $|A| \ge c(\beta) |S|$.~\qed

For the proof of Lemma \ref{c1g} we first need a regularization result for the maximum of two $C^{1,\gamma}$ functions, $\gamma \in (0,1)$. In the next lemma we smooth out the ``corners" of the graph of the positive part of a $C^{1,\gamma}$ function without increasing its area too much.

\begin{lemma}\label{env}
Assume $h: \overline \Om \to \R^+$ is a $C^{1,\gamma}$ function that satisfies $\{ h>0 \}=\Om$, $h=0$ on $\p \Om$, and for any $z \in \overline \Om$ there exists a linear function $l_{z}$ (its tangent plane) such that
$$|h-l_{z}| \le \eps |x-z|^{1+\gamma}, \quad \quad \forall x\in \overline \Om, $$
for some $\eps>0$ small.
Let $$K:=\{ z \in \overline \Om{\mbox{ s.t. }} l_{z}+|x-z|^{1+\gamma} \ge 0 \quad \mbox{in $\R^n$}\}$$
and denote by $$ h^*(x):=\inf_{z \in K} \left (l_{z}+|x-z|^{1+\gamma} \right).$$
Then
$$\int_{\overline \Om} h^* \, dx \le (1+\eps^\sigma) \int_K h  \, dx $$
with $\sigma > 0$ depending on $n$ and $\gamma$.
\end{lemma}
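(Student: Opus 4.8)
The plan is to split the integral over $K$ (where $h^*$ is forced to coincide with $h$) from the integral over the ``collar'' $N:=\overline\Om\setminus K$, to show that $N$ is a thin neighbourhood of $\p\Om$ on which $h^*$ is extremely small, and then to convert this into the desired inequality by slicing $\Om$ along the normal rays to $\p\Om$. Since $\eps\le1$, the hypothesis gives $h\le l_z+|x-z|^{1+\gamma}$ on $\overline\Om$ for every $z$, so $h\le h^*$ on $\overline\Om$; for $z\in K$ the competitor $l_z+|x-z|^{1+\gamma}$ is nonnegative on $\R^n$, hence $h^*\ge0$ everywhere. Evaluating the infimum at $z\in K$ gives $h^*(z)\le l_z(z)=h(z)$, so in fact $h^*=h$ on $K$. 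Therefore $\int_{\overline\Om}h^*=\int_K h+\int_N h^*$, and the lemma is equivalent to
\[
\int_N h^*\ \le\ \eps^{\sigma}\int_K h .
\]

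I record two elementary facts. (a) Applying the hypothesis with $z$ ranging over $\p\Om$ (where $h=0$) and using that $\Om$ is bounded (say $\Om\subset B_1$), each $l_z$ is an affine function which is $O(\eps)$ on $\p\Om$, whence $\|h\|_{L^\infty(\Om)}+\|\nabla h\|_{L^\infty(\Om)}\le C\eps$. (b) Restricting $l_z+|x-z|^{1+\gamma}$ to the ray issued from $z$ in the direction $-\nabla h(z)$ (along which it attains its global minimum) reduces to the one–variable function $s\mapsto h(z)-|\nabla h(z)|\,s+s^{1+\gamma}$, whose minimum over $s\ge0$ equals $h(z)-c_\gamma|\nabla h(z)|^{(1+\gamma)/\gamma}$ with $c_\gamma=\gamma(1+\gamma)^{-(1+\gamma)/\gamma}$; hence
\[
z\in K\iff h(z)\ge c_\gamma\,|\nabla h(z)|^{\frac{1+\gamma}{\gamma}} .
\]

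Next I claim $N\subset\{\operatorname{dist}(\cdot,\p\Om)<C\eps^{1/\gamma}\}$ and $h^*\le C\eps^{(1+\gamma)/\gamma}$ on $N$. Fix $z\in N$, set $q:=|\nabla h(z)|$ (necessarily $q>0$ by (b)), and let $s_{\max}$ be the distance from $z$ to $\p\Om$ along the ray $t\mapsto z-t\,\nabla h(z)/q$. From $h\ge0$ and the hypothesis at $z$, $0\le h(z-t\,\nabla h(z)/q)\le h(z)-qt+\eps t^{1+\gamma}$, so $h(z)\ge qt-\eps t^{1+\gamma}$ for $0\le t\le s_{\max}$. If the unconstrained maximiser $t_*=(q/((1+\gamma)\eps))^{1/\gamma}$ satisfied $t_*\le s_{\max}$ we would get $h(z)\ge c_\gamma q^{(1+\gamma)/\gamma}\eps^{-1/\gamma}>c_\gamma q^{(1+\gamma)/\gamma}$, contradicting (b); thus $t_*>s_{\max}$, $t\mapsto qt-\eps t^{1+\gamma}$ is increasing on $[0,s_{\max}]$, and therefore $h(z)\ge\frac{\gamma}{1+\gamma}\,q\,s_{\max}$. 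Combining with (b) and then (a) yields $s_{\max}\le(1+\gamma)^{-1/\gamma}q^{1/\gamma}\le C\eps^{1/\gamma}$, and $\operatorname{dist}(z,\p\Om)\le s_{\max}$ proves the inclusion; in particular $|N|\le C\eps^{1/\gamma}\mathcal H^{n-1}(\p\Om)$. Moreover, for $x\in N$, choosing $z\in K$ with $|x-z|\le C\eps^{1/\gamma}$ (move inwards from $x$, normally to $\p\Om$, past the collar) and using $h^*(x)\le l_z(x)+|x-z|^{1+\gamma}$ together with $|\nabla h(z)|\le C\eps$ and $h(z)\le\|\nabla h\|_{L^\infty}\operatorname{dist}(z,\p\Om)\le C\eps^{(1+\gamma)/\gamma}$ gives the stated bound on $h^*$.

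It remains to convert these into the ratio estimate. Foliate a neighbourhood of $\p\Om$ by inward normal segments; this is licit because $N$ avoids $\{\nabla h=0\}$ by (b), near which $\p\Om$ is a genuine $C^{1,\gamma}$ hypersurface. Over a boundary point $z_0$ with $a:=|\nabla h(z_0)|>0$, the above estimates applied inside that column show it meets $N$ in a segment of length $\lesssim a^{1/\gamma}$ on which $h^*\lesssim a^{(1+\gamma)/\gamma}$, so its contribution to $\int_N h^*$ is $\lesssim a^{(2+\gamma)/\gamma}$; on the other hand, using $h(z)\ge a\operatorname{dist}(z,\p\Om)-\eps\operatorname{dist}(z,\p\Om)^{1+\gamma}$ and integrating the (essentially linear) profile past the collar, the same column contributes to $\int_K h$ at least $\gtrsim a^{1+2/\gamma}\eps^{-2/\gamma}$ when it is long and at least $\gtrsim a\,r_0^2$ when it is short, $r_0>0$ being a lower bound for the depth of $\Om$. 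In either case a direct computation (using $a\le C\eps$) gives a ratio $\lesssim\eps^{2/\gamma}$, uniformly in $z_0$; integrating over $\p\Om$ yields $\int_N h^*\le C\eps^{2/\gamma}\int_K h$, which is $\le\eps^{\sigma}\int_K h$ for $\eps$ small with any $0<\sigma<2/\gamma$. The main obstacle is precisely this slicing step: one must ensure the normal foliation is well defined near $\p\Om$ (boundary regularity away from $\{\nabla h=0\}$, no thin ``necks'' of $\Om$) and that every geometric constant it produces — in particular the depth bound $r_0$ — is absorbed into the power of $\eps$, so that the clean form $(1+\eps^{\sigma})$ is recovered; granted this, everything else is soft.
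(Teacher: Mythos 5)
Your opening reductions are correct and coincide with what the paper uses implicitly: $h=h^*$ on $K$, so the lemma is equivalent to $\int_N h^*\le \eps^\sigma\int_K h$ with $N:=\overline\Om\setminus K$, and the characterization $z\in K\iff h(z)\ge c_\gamma|\nabla h(z)|^{(1+\gamma)/\gamma}$ is exactly the paper's first observation. Your fact (a) also holds (after normalizing $\Om\subset B_1$ by the scaling $h_\lambda(x)=\lambda^{1+\gamma}h(x/\lambda)$, which leaves both hypotheses and conclusion invariant), and the thin-collar bounds on $N$ and on $h^*\!\restriction_N$ look sound.

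The gap is precisely the one you flag: the final slicing step. The normal-foliation argument is not justified for a general $C^{1,\gamma}$ domain, and—more importantly—it inevitably imports geometric data about $\Om$ (a lower bound $r_0$ on the depth, $\mathcal H^{n-1}(\p\Om)$, Jacobian bounds for the normal map). These become part of the constant $C$ in $\int_N h^*\le C\eps^{2/\gamma}\int_K h$, and to absorb $C$ into a power of $\eps$ you must take $\eps$ small depending on $\Om$. The lemma as stated requires $\sigma$ (and implicitly the smallness threshold for $\eps$) to depend only on $n$ and $\gamma$; in the paper's application $\Om=\{h>0\}$ is a set whose geometry you do not control, so an $\Om$-dependent threshold is unusable. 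The global foliation also needs comparison of a column's contribution to $\int_N h^*$ with the \emph{same} column's contribution to $\int_K h$, and that matching is exactly what becomes delicate when $\p\Om$ is only $C^{1,\gamma}$ and columns can be arbitrarily short.

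The paper avoids all of this with a purely local Vitali argument. For each $y\in\overline\Om\setminus K$ it proves a ball estimate $\int_{(\overline\Om\setminus K)\cap B_{d_y}(y)}h^*\le \eps^\sigma\int_{K\cap B_{d_y/5}(y)}h$. This local claim is scaling invariant, so after translating $y$ to $0$ and rescaling by $\lambda=|\nabla h(y)|^{-1/\gamma}$ one may assume $\nabla h(0)=e_n$ and $h(0)\in[0,c_0)$. Then in the huge ball $B_{d_0}$ with $d_0\sim\eps^{-1/(2(\gamma+1))}$ the hypothesis forces $h$ to be within $\eps^{1/2}$ of the plane $h(0)+x_n$, so $\overline\Om\cap B_{d_0}\subset\{x_n\ge -C_0\}$, $B_{d_0}\cap\{x_n\ge C_0\}\subset K$ with $h\gtrsim x_n$ there, and $h^*\le C$ on the remaining slab $\{|x_n|\le C_0\}$. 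This yields $\int_{\text{bad}\cap B_{d_0}}h^*\lesssim d_0^{n-1}$ versus $\int_{K\cap B_{d_0/5}}h\gtrsim d_0^{n+1}$, a ratio $d_0^{-2}\sim\eps^{1/(\gamma+1)}$. A Vitali covering of $\overline\Om\setminus K$ by balls $B_{d_{y_i}}(y_i)$ with disjoint $B_{d_{y_i}/5}(y_i)$ then sums the local estimates to the global one. No foliation, no coarea formula, and all constants are dimensional. If you want to complete your approach you would essentially have to localize it in this same way, at which point it becomes the paper's proof.
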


 Clearly if we replace $|x-z|^{1+\gamma}$ by $m|x-z|^{1+\gamma}$ the conclusion still holds since the problem remains invariant under multiplication by a constant $m$.
The function $h^*$ can be thought as a $C^{1,\gamma}$ upper envelope of norm $\|\nabla h\|_{C^\gamma}/\eps$ of the function $h$ (extended by $0$ in the whole $\R^n$).

By construction $h^* \ge h$ in $\overline \Om$, $h=h^*$ in $K$, and at any point $z\in K$ the graph of $h$ is tangent by below to the $C^{1,\gamma}$ function $l_z+|x-z|^{1+\gamma} \ge 0$.

\begin{proof} Notice that
$$z \in K \quad \Leftrightarrow \quad h(z) \ge c_0 |\nabla h(z)|^\frac{\gamma+1}{\gamma}, \quad \quad \mbox{with} \quad c_0:=\gamma (\gamma+1)^{-\frac{\gamma+1}{\gamma}}.$$

We show that for any $y \in \overline \Om \setminus K$ there exists $d_y>0$ such that
\begin{equation}\label{int1}
\int_{(\overline \Om \setminus K)\cap B_{d_y}(y)} h^* \, dx \le \eps^\sigma \int_{B_{d_y/5}(y) \cap K} h \, dx.
\end{equation}
Then, by Vitali lemma, we cover $\overline \Om \setminus K$ with a collection of balls $B_{d_{y_i}}(y_i)$ with $B_{d_{y_i}/5}(y_i)$ disjoint and we obtain the desired claim by summing \eqref {int1} for all $y_i$.

Our hypotheses and \eqref{int1} remain invariant under the scaling $$h_\lambda(x)=\lambda^{1+ \gamma} h( x/\lambda ),$$ thus we may assume for simplicity that $y=0$ and $\nabla h(0)=e_n$. Since $0 \notin K$ we have $h(0) \in [0,c_0)$, and by our hypothesis $$|h(x)-(h(0)+ x_n)| \le \eps|x|^{1+\gamma},$$
hence
$$|h(x)-(h(0)+x_n)| \le \eps^{1/2} \quad \mbox{if} \quad |x| \le 2 d_0:= \eps^{-\frac{1}{2(\gamma+1)}}.$$
This implies that for some $C_0$ sufficiently large,
$$ \overline \Om \cap B_{d_0} \subset \{ x_n \ge - C_0\},$$
$$|\nabla h| \le 2, \quad h \ge c_0 2^\frac{\gamma+1}{\gamma}  \quad \mbox {in the set} \quad B_{d_0}\cap\{x_n \ge C_0\}.$$
We obtain $$B_{d_0}\cap\{x_n \ge C_0\} \subset K, \quad \mbox{and} \quad h^* \le C \quad \mbox{in} \quad B_{d_0} \cap \{|x_n| \le C_0\}$$
hence
$$\int_{(\overline \Om \setminus K) \cap B_{d_0}} h^* \, dx\le C d_0^{n-1}, \quad \int_{K \cap B_{d_0/5}} h \, dx \ge c d_0^{n+1},$$
and \eqref{int1} follows.
\end{proof}

Assume for simplicity that $E$ is a set
$$E:=\{x_n \ge g(x')\},$$
where $g$ is a $C^{1,\gamma}$ function and $g(0)=0$, $\nabla_{x'}g(0)=0$.

Let $u \in H^1(E \cap \overline B_1)$, be positive and harmonic in the interior with $u=0$ on $\p E$.
First we state a consequence of $C^{1,\gamma}$ estimates for harmonic functions.

\begin{lemma}\label{c1ga}
Let $F=\{x_n \ge f(x')\}$ be a compact perturbation of $E$ in $B_{1/2}$ and denote by $v$ the harmonic function in $F \cap B_1$ which vanishes on $\p F \cap B_1$ and equals $u$ on $\p B_1$. Assume that
$f$, $g$ are $C^{1,\gamma}$ functions with norm bounded by a constant $M$, $\|u\|_{L^2} \le M$ and also
that $|f-g| \le \eps$. Then
$$\|\nabla u-\nabla v  \|_{L^\infty(E \cap F \cap B_{1/2})}\le C \eps^\frac{\gamma}{1+\gamma}.$$
for some constant $C$ depending on $n$, $\gamma$ and $M$.
\end{lemma}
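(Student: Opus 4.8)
The plan is to deduce this from classical $C^{1,\gamma}$ estimates up to the boundary for harmonic functions, combined with a comparison step and a two--scale argument. We may assume $\eps$ is small, since otherwise the estimate is trivial: $\|\nabla u\|_{L^\infty}$ and $\|\nabla v\|_{L^\infty}$ are each bounded by a constant depending on $n$, $\gamma$, $M$. Indeed, since $E=\{x_n\ge g(x')\}$ and $F=\{x_n\ge f(x')\}$ are epigraphs of $C^{1,\gamma}$ functions with norm at most $M$, while $u$ and $v$ are harmonic in the interior and vanish on $\p E\cap B_1$, respectively $\p F\cap B_1$, the boundary Schauder estimates (applied after flattening $\p E$, resp. $\p F$, by a $C^{1,\gamma}$ change of variables, which turns $\triangle$ into a divergence--form operator with $C^\gamma$ coefficients) give
$$\|u\|_{C^{1,\gamma}(\overline E\cap B_{3/4})}+\|v\|_{C^{1,\gamma}(\overline F\cap B_{3/4})}\le C,$$
where we also use that $u$ and $v$ are bounded by a constant depending on $n$ and $M$ (for $u$ via $\|u\|_{L^2}\le M$ and the subharmonicity of $u\ge 0$, and for $v$ then by the maximum principle).

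The first step would be to show that $|u-v|\le C\eps$ throughout $E\cap F\cap B_1$. The function $u-v$ is harmonic in the interior of $E\cap F\cap B_1$. On $\p B_1$ it vanishes, since $v=u$ there by hypothesis. At a point of $\p(E\cap F)\cap B_1$ we have $x_n=\max\{f(x'),g(x')\}$: if the maximum is attained by $g$, the point lies on $\p E$, so $u=0$ there and, since the point sits at vertical distance $(g-f)^+(x')\le\eps$ above $\p F$, the gradient bound on $v$ gives $|u-v|=|v|\le C\eps$; the case where the maximum is attained by $f$ is symmetric. The maximum principle then yields $|u-v|\le C\eps$ in $E\cap F\cap B_1$.

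The gradient estimate then follows by balancing two scales. Fix $x_0\in E\cap F\cap B_{1/2}$ and set $\delta:=\eps^{1/(1+\gamma)}$. Since $E\cap F=\{x_n\ge\max\{f,g\}\}$ is the epigraph of a Lipschitz function with constant controlled by $M$, the shifted point $x_1:=x_0+c_M\,\delta\,e_n$, with $c_M$ depending only on $M$, satisfies $x_1\in B_{3/4}$ and $B_{2\delta}(x_1)\subset E\cap F\cap B_1$. On $B_{2\delta}(x_1)$ the function $u-v$ is harmonic, so interior gradient estimates together with $|u-v|\le C\eps$ give
$$|\nabla u(x_1)-\nabla v(x_1)|\le \frac{C}{\delta}\,\eps=C\eps^{\gamma/(1+\gamma)}.$$
On the other hand, the $C^{1,\gamma}$ bounds above and $|x_1-x_0|\le c_M\delta$ give
$$|\nabla u(x_0)-\nabla u(x_1)|+|\nabla v(x_0)-\nabla v(x_1)|\le C\delta^\gamma=C\eps^{\gamma/(1+\gamma)}.$$
Adding the last two displays and applying the triangle inequality proves the lemma, since $x_0$ was an arbitrary point of $E\cap F\cap B_{1/2}$.

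The point that needs care is precisely this balancing: the interior estimate for the harmonic function $u-v$ deteriorates like $\eps/\delta$ as $x_1$ is taken closer to the \emph{merely Lipschitz} boundary of $E\cap F$, whereas the H\"older modulus of continuity of $\nabla u$ and $\nabla v$ costs $\delta^\gamma$; optimizing in $\delta$ forces $\delta=\eps^{1/(1+\gamma)}$ and produces the exponent $\gamma/(1+\gamma)$. The device that makes the argument go through — and lets us avoid boundary gradient estimates for $u-v$ on the non--smooth set $E\cap F$ — is to move from $x_0$ into the interior before invoking the interior estimate, exploiting that $\nabla u$ and $\nabla v$ are separately $C^\gamma$ up to their own (smooth) boundaries.
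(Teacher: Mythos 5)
Your proof is correct and follows essentially the same route as the paper: boundary $C^{1,\gamma}$ Schauder estimates for $u$ and $v$ on their own (smooth) epigraphs, the comparison $|u-v|\le C\eps$ on $\p(E\cap F\cap B_1)$ propagated into the interior by the maximum principle, and an interpolation yielding the exponent $\tfrac{\gamma}{1+\gamma}$. The only cosmetic difference is that you make the interpolation explicit via the two-scale/interior-shift device and invoke harmonicity of $u-v$ at the shifted point, whereas the paper simply cites that $u-v$ has bounded $C^{1,\gamma}$ norm on $E\cap F\cap B_{3/4}$ together with $\|u-v\|_{L^\infty}\le C\eps$ and interpolates directly, without needing harmonicity at that step.
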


\begin{proof}
By boundary $C^{1,\gamma}$ estimates $$\|v\|_{C^{1,\gamma}(B_{3/4}\cap F)} \le C \quad  \Rightarrow \quad |u-v| \le C \eps \quad \mbox{on} \quad \p(E \cap F \cap B_1).$$ By maximum principle, the last inequality holds also in the interior of the domain and the conclusion follows since $u-v$ has bounded $C^{1,\gamma}$ norm in $B_{3/4}\cap E \cap F$.
\end{proof}

\noindent{\it Completion of the proof of Lemma \ref{c1g}.}
We estimate the change in the Dirichlet integral for the harmonic replacement of $u$ whenever we perturb $E$ by a small $C^{1,\gamma}$ set $A \subset B_\eps$. We distinguish two cases, when $A$ is interior to $E$ and when $A$ is exterior to $E$.
Assume for simplicity that $|\nabla u(0)|=1$.

\

\noindent{\it Case 1:} The set $A$ is interior to $E$,
\begin{equation}\label{case1}
A=\{g(x') \le x_n < f(x') \} \subset B_\eps,
\end{equation}
for some function $f$ with $C^{1,\gamma}$ norm bounded by a constant $M$. We let $\bar u:=u_{E^c \cup A}$ and we want to show that
\begin{equation}\label{O1}
\lim_{\eps \to 0}\frac{1}{|A|}\int_{B_1}(|\nabla \bar u|^2- |\nabla u|^2) \, dx =1.\end{equation}
 After modifying $f$ in the set $B_{2\eps} \setminus B_\eps$ we may assume that $f=g$ outside $B_{2\eps}$ and $f$ has bounded $C^{1,\gamma}$ norm. {F}rom \eqref{case1} we also obtain
that \begin{equation}\label{smn}
 \|  g\|_{C^{1,\frac \gamma 2}(B'_{2 \eps}) } ,\quad  \|  f\|_{C^{1,\frac \gamma 2}(B'_{2 \eps}) } \quad \mbox{are bounded by $ C \eps^\frac \gamma 2$.}
 \end{equation}
We have
$$\int_{B_1}|\nabla \bar u|^2- |\nabla u|^2 \, dx=\int_{B_1} \nabla (\bar u- u) \cdot \nabla (\bar u + u) \, dx.$$
After integrating by parts in the sets $E \setminus A$ and $A$ we find
\begin{equation}\label{dif}
\int_{B_1}|\nabla \bar u|^2- |\nabla u|^2 \, dx=\int_{\p A}  u \, \bar u_\nu \,  d\mathcal H^{n-1},
\end{equation}
with $\nu$ the exterior normal to $A$. We need to estimate
$$\int_{\Gamma} u \, \bar u_\nu \, d \mathcal H^{n-1} \quad \quad \mbox{with} \quad \Gamma:=\{(x',f(x')) {\mbox{ s.t. }} f(x') > g(x')\}.$$
Let $T \subset \Gamma$ be a measurable set and denote by $T'\subset \R^{n-1}$ its projection along $e_n$ direction.
Since in $B_\eps$, $u_n=1+o(1) $ with $o(1) \to 0$ as $\eps \to 0$, we use \eqref{smn} and we see that
$$ (1+o(1)) \inf_{T} \bar u_\nu \int_{T'}h \, dx' \le \int_T  u\, \bar u_\nu \,  d \mathcal H^{n-1} \le (1+o(1)) \sup_{T} \bar u_\nu \int_{T'}h \, dx',$$
with $$h:=f-g.$$

For the upper bound we use that $\bar u \le v$ with $v$ defined in Lemma \ref{c1ga}.
Then $\bar u_\nu \le v_\nu=1+o(1)$ in $\Gamma$ and we find that
\begin{equation}\label{L1}
\int_{\Gamma} u \, \bar u_{\nu} \, d \mathcal H^{n-1} \le (1+o(1))|A|. \end{equation}
For the lower bound we use Lemma \ref{env} for $h^+$ and consider its $C^{1,\gamma/2}$ envelope of norm $\eps^{\gamma /4} \gg \eps^{\gamma /2}$.
Denote by $K' \subset \R^{n-1}$ the contact set between $h^+$ and its envelope and let $K \subset \Gamma$ be the corresponding set that projects onto $K'$.

At any point $z\in K$ there is a $C^{1,\gamma/2}$ graph
$$G_z:=\{x_n=f_z(x')\} \quad \quad f_z:=g + l_z + \eps^{\frac \gamma 4}|x'-z'|^{1+\frac \gamma 2} ,$$
and $G_z$ is tangent by above to $A$ and is included in $E \setminus A$. Moreover after using a cutoff function we may assume $h_z$ has small $C^{1,\gamma/2}$ norm in a neighborhood of $0$ and coincides with $g$ outside this neighborhood. Let $v_z$ denote the corresponding harmonic function for $h_z$ as in Lemma \ref{c1ga}. Then $\bar u \ge v_z $, or $\bar u_\nu(z) \ge 1+o(1)$ and we obtain
\begin{equation}\label{L2}
\int_{K}   \bar u_\nu \, u \, \,  d \mathcal H^{n-1} \ge (1+o(1))\int_{K'} h \, dx' \ge (1+o(1)) \int_{\Gamma'} h \, dx' , \end{equation}
where in the last inequality we used Lemma \ref{env}.
Then \eqref{O1} follows from \eqref{L1} and \eqref{L2}.

\

\noindent{\it Case 2:} The set $A$ is exterior to $E$,
$$A=\{f(x') < x_n \le g(x') \} \subset B_\eps,$$
for some function $f$ with $C^{1,\gamma}$ norm bounded. We let $\bar u:=u_{E^c \setminus A}$ and we want to show that
\begin{equation}\label{O2}
\lim_{\eps \to 0}\frac{1}{|A|}\int_{B_1}(|\nabla u|^2- |\nabla \bar u|^2)  dx =1.\end{equation}
As before we may assume that $h=g$ outside $B_{2\eps}$ and \eqref{smn} holds. Since
\begin{equation}\label{A1}
\int_{B_1}|\nabla \bar u|^2- |\nabla u|^2 \, dx=\int_{\p A}  \bar u \, u_\nu \, d\mathcal H^{n-1}\end{equation}
and
\begin{equation}\label{A2}
u_\nu=1+o(1)\end{equation} we need to estimate
$$\int_{\Gamma} \bar u  \, d \mathcal H^{n-1} \quad \quad \mbox{with} \quad \Gamma:=\{(x',g(x')) {\mbox{ s.t. }} g(x') > f(x')\}.$$
The function $v$ defined in Lemma \ref{c1ga} is a lower barrier for $\bar u$ and since $v_n=1+o(1)$ we obtain
\begin{equation}\label{B1}
\int_{\Gamma} \bar u \, d \mathcal H^{n-1} \ge (1+o(1)) \int_{\Gamma'} h \, dx', \quad \quad {\mbox{with }}h:=(g-f)^+.\end{equation}
For the upper bound we apply Lemma \ref{env} for the function $h$ as in case 1 above.
For any $z=(z',f(z'))$, $z' \in \Gamma' $ we define the graph $G_z$ of the function
$$G_z:=\{x_n=f_z(x')\}, \quad \quad f_z:=g - l_z - \eps^{\frac \gamma 4}|x'-z'|^{1+\frac \gamma 2},$$
which is included in $E^c$ and it is tangent to $A$ by below at $z$. Since $\bar u \le v_z$ and $\p _{n} v_z=1+o(1)$ we obtain $$\bar u \le (1+o(1)) (x_n-f_z(x')).$$
After taking the infimum over all $z \in \Gamma$ we find $$\bar u(x',g(x_n)) \le (1+o(1))h^*(x') \quad \quad \forall x' \in \Gamma'.$$ By Lemma \ref{env} we find
\begin{equation}\label{B2}
\int_{\Gamma} \bar u \, \, d \mathcal H^{n-1} \le (1+o(1)) \int_{\Gamma'}h^* \, dx' \le (1+o(1)) \int_{\Gamma'}h \,  dx' .\end{equation}
Now, \eqref{O2} is a consequence of \eqref{A1}, \eqref{A2}, \eqref{B1} and \eqref{B2}, and this ends the proof of Lemma \ref{c1g}.~\qed

\end{document}